\newcommand\myshade{85}
\colorlet{mylinkcolor}{violet}
\colorlet{mycitecolor}{red}
\colorlet{myurlcolor}{cyan}
\numberwithin{equation}{section}
\newtheorem{theorem}{Theorem}[section]
\newtheorem{theoremintro}{Theorem}
\newtheorem{corollaryintro}{Corollary}
\newtheorem{proposition}[theorem]{Proposition}
\newtheorem{proposition-definition}[theorem]{Proposition-Definition}
\newtheorem{corollary}[theorem]{Corollary}
\newtheorem{lemma}[theorem]{Lemma}
\theoremstyle{definition}
\newtheorem{remark}[theorem]{Remark}
\newtheorem{remarkintro}{Remark}
\newtheorem{example}[theorem]{Example}
\newtheorem{definition}[theorem]{Definition}
\newtheorem{theoremprime}{Theorem}
\newenvironment{theorem'}%
  {%
   \begin{theoremprime}}
  {\end{theoremprime}}
\newcommand{\thick}{\mathsf{thick}}
\newcommand{\Hom}{\mathrm{Hom}}
\newcommand{\za}{\alpha}
\newcommand{\zb}{\beta}
\newcommand{\zd}{\delta}
\newcommand{\zD}{\Delta}
\newcommand{\Z}{\mathbb{Z}}
\newcommand{\aaa}{{\bf{a}}}
\newcommand{\bbb}{{\bf{b}}}
\newcommand{\ccc}{{\bf{c}}}
\newcommand{\xxx}{{\bf{x}}}
\newcommand{\yyy}{{\bf{y}}}
\newcommand{\mS}{{\mathbb{S}}}
\newcommand{\cals}{\Sigma}
\newcommand{\calt}{\mathcal{T}}
\newcommand{\calm}{M}
\newcommand{\calw}{\mathcal{W}}
\newcommand{\calk}{\mathcal{K}}
\newcommand{\bbp}{\mathbb{P}}
\newcommand{\dba}{\mathrm{per}(A)}
\newcommand{\Db}{\mathrm{per}}
\newcommand{\SSS}{\mathbb{S}}
\newcommand{\bul}{\bullet}
\newcommand{\uSdim}{\overline{{\rm \mathbb{S}dim}}\hspace{0.5mm}}
\newcommand{\lSdim}{\underline{{\rm \mathbb{S}dim}}\hspace{0.5mm}}
\newcommand{\gldim}{{\rm gldim}\hspace{0.5mm}}
\renewcommand{\P}{P^\bullet}
\definecolor{dark-green}{RGB}{14,150,2}
\definecolor{red}{RGB}{250,0,0}
\newcommand{\gpoint}{\color{black}{\circ}}
\newcommand{\rpoint}{\color{red}{\bullet}}
\begin{document}

\title[Entropy of the Serre functor]{Entropy of the Serre functor for partially wrapped Fukaya categories of surfaces with stops}

\author{Wen Chang}
\address{School of Mathematics and Statistics, Shaanxi Normal University, Xi'an 710062, China}
\email{changwen161@163.com}

\author{Alexey Elagin}
\address{School of Mathematical and Physical Sciences, University of Sheffield, 
 The Hicks Building,
 Sheffield 
 S3 7RH, UK}
\email{alexey.elagin@gmail.com}

\author{Sibylle Schroll}
\address{Insitut f\"ur Mathematik, Universit\"at zu K\"oln, Weyertal 86-90, K\"oln, Germany }
\email{schroll@math.uni-koeln.de}

\keywords{Fukaya category, Derived category, Serre functor, Entropy}

\dedicatory{We dedicate this paper to Idun Reiten, whose work transformed the representation theory of algebras }

\date{\today}

\subjclass[2010]{16E35, %(derived categories for associative rings)
57M50}%(general geometric structures on low-dimensional manifolds)

\begin{abstract} We prove that the categorical entropy of the Serre functor $\mathbb{S}$ 
in the partially wrapped Fukaya category of a graded surface with stops is given by the function
$$H_t(\mathbb{S}) = \left\{\begin{array}{ll}
						(1-\min \Omega)t, & t\geq 0; \\
						(1-\max \Omega)t, & t\leq 0,
               \end{array}\right.$$	
where $\Omega = \{\frac{\omega_1}{m_1} \ldots, \frac{\omega_b}{m_b},0\}$,
 $\omega_i$ is the winding number of the $i$-th boundary component $\partial_i$ of the surface, and $m_i$ is the number of stops on $\partial_i$. 
It then follows 
that the upper and lower Serre dimensions are given by $1-\min \Omega$ and $1-\max \Omega$, respectively. 
Such Fukaya categories are equivalent to perfect derived categories of homologically smooth graded gentle algebras, so our results provide the entropy of the Serre functor and the Serre dimension for  these perfect derived categories.
Furthermore, for ungraded homologically smooth gentle algebras, we prove a Gromov--Yomdin-like equality between the entropy of the Serre functor and the logarithm of the spectral radius of the Coxeter transformation.
\end{abstract}

\maketitle
\setcounter{tocdepth}{2}

\tableofcontents

\section*{Introduction}\label{Introductions}

The study of \emph{entropy} in the context of triangulated categories, introduced in \cite{DHKK14}, provides a framework for quantifying the complexity of endofunctors. 
Inspired by topological entropy in dynamical systems, the categorical entropy $H_t(F)$ (see Definition~\ref{definition:entropy}) for a triangulated category $\mathcal{T}$ and an exact endofunctor $F: \mathcal{T} \to \mathcal{T}$ determines the growth rate of extensions between objects under iteration of $F$.

An important endofunctor in a triangulated category is the  \textit{Serre functor}, which exists in many triangulated categories of geometric and algebraic origin, such as derived categories of coherent sheaves and perfect derived categories of finite dimensional algebras that are Gorenstein. The computation of the entropy for Serre functors as well as that of other related endofunctors was initiated already in \cite{DHKK14}, where the entropy of the Serre functor of fractionally Calabi--Yau categories, the bounded derived categories of path algebras of acyclic quivers and the bounded derived categories of coherent sheaves of smooth projective varieties was calculated.

Recently, many further calculations of the entropy of the Serre functor and related endofunctors have been given, for example, see \cite{BK23a, E22, EL21, H24}.

In this paper, we compute the entropy of the Serre functor $\mathbb{S}$ of the partially wrapped Fukaya category $\mathcal{W}(\Sigma, M, \eta)$, where  $\Sigma$ is a smooth real compact oriented surface with boundary $\partial$, $M$ is a non-empty set of stops on $\partial$,  and $\eta$ is a line field on $\Sigma$. Our results show that the entropy of $\mathbb{S}$ is determined by  the number of stops in the boundary components and by their winding numbers.

For the statement of our main result, we fix the following notation (see Definition~\ref{definition:marked surface}). Let $\Sigma$ be an oriented surface with boundary components $\partial_1, \ldots, \partial_b$ and a finite set $M\subset \Sigma$ of stops   such that each boundary component $\partial_i$  has $m_i\geq 1$ stops.  Let $\eta$ be a line field on $\Sigma$. Denote by  $\mathcal{W}(\Sigma,M,\eta)$ the partially wrapped Fukaya category of $(\Sigma,M,\eta)$. Let~$\omega_i$ be the winding number of the boundary component $\partial_i$.

\begin{theoremintro}[Theorem~\ref{thm:main1gentle}]\label{thm:main1}
Let $\mathbb{S}$ be the Serre functor of the partially wrapped Fukaya category $\mathcal{W}(\Sigma, M, \eta)$. Suppose that $\Sigma$ is not a disc and that there are no stops in the interior of $\Sigma$. Set $\Omega = \{\frac{\omega_1}{m_1}, \ldots, \frac{\omega_b}{m_b},0\}$. Then the entropy of $\mathbb{S}$ is given  by the function from $\mathbb{R}$ to $\mathbb{R}$ sending $t \in \mathbb{R}$ to 
$$H_t(\mathbb{S}) = \left\{\begin{array}{ll}
						(1- \min  \Omega)t, & t\geq 0; \\ & \\
						(1-\max \Omega)t, & t\leq 0.
                        \end{array}\right.$$	
\end{theoremintro}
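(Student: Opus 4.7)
The plan is to exploit the geometric model of $\mathcal{W}(\Sigma, M, \eta)$ as graded arcs on $\Sigma$, together with the geometric description of the Serre functor. By the work of Haiden--Katzarkov--Kontsevich (and its refinements), $\mathcal{W}(\Sigma, M, \eta)$ is equivalent to the perfect derived category of a graded gentle algebra $A$ read off from an admissible dissection of $\Sigma$; under this equivalence, indecomposable objects correspond to graded arcs and graded $\Hom$-spaces are computed by counting graded intersection points. This reduces the statement to its algebraic counterpart Theorem~\ref{thm:main1gentle}.

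The next step is a geometric description of the Serre functor. For a graded arc $\gamma$ with endpoints on boundary components $\partial_i \Sigma$ and $\partial_j \Sigma$, $\mathbb{S}(\gamma)$ is obtained by sliding each endpoint past one stop in a prescribed direction and adjusting the grading. After $m_i$ applications the endpoint on $\partial_i$ returns to its starting position with accumulated grading shift $[m_i - \omega_i]$, giving an effective per-application shift of $1 - \omega_i/m_i$. Fully stopped boundary components contribute no rotation but retain the baseline shift of $[1]$ coming from the definition of $\mathbb{S}$, which is the origin of the entry $0 \in \Omega$.

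To compute the entropy, choose $G$ to be the direct sum of the simple arcs (those connecting consecutive stops on each boundary); this is a split generator. By the geometric model,
\[
\delta_t(G, \mathbb{S}^n G) = \sum_{\sigma_\alpha, \sigma_\beta} \sum_{p \in \sigma_\alpha \cap \mathbb{S}^n \sigma_\beta} e^{\deg(p)\, t}.
\]
The iterated arc $\mathbb{S}^n \sigma_\beta$ wraps its endpoints $n$ times around their respective boundaries, so $|\sigma_\alpha \cap \mathbb{S}^n \sigma_\beta|$ grows linearly in $n$, with the degrees of the intersection points concentrating around $n (1 - \omega_i/m_i)$ for the boundary $\partial_i \Sigma$ being wrapped. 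For $t \geq 0$ the dominant contribution then comes from the boundary minimizing $\omega_i/m_i$, yielding the upper bound $h_t(\mathbb{S}) \leq (1 - \min \Omega) t$; a matching lower bound is obtained by isolating this dominant term via the graded rank of the corresponding summand of $\Hom^{*}(G, \mathbb{S}^n G)$. The symmetric argument for $t \leq 0$ yields $(1 - \max \Omega) t$.

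The main obstacle is the precise bookkeeping of graded intersections under iteration: the degree of an intersection depends simultaneously on the line field $\eta$ and on the winding induced by boundary rotation, so verifying that the degrees indeed concentrate near $n (1 - \omega_i/m_i)$ requires a careful geometric analysis. The hypothesis that $\Sigma$ is not a disc is needed to rule out the degenerate type-$A$ case in which $\mathbb{S}$ becomes eventually periodic and no genuine exponential behaviour appears.
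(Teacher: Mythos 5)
Your overall strategy matches the paper's: reduce to the geometric model, express graded $\Hom$-spaces as counts of graded intersections, and track how the degrees of intersections grow under iteration of the Serre functor or the Auslander--Reiten translate. The key asymptotic formula you identify (degrees growing linearly with rates governed by $\omega_i/m_i$) is indeed the content of the paper's Lemma~\ref{lem:key3} and Lemma~\ref{lem:2.3}, and the conclusion that the extremal rate is selected by the sign of $t$ is the content of the case analysis in Theorem~\ref{thm:main1gentle}. However, you explicitly flag that you have not carried out this bookkeeping, and that is precisely where all the technical work lies: one needs to show that $\tau^{mN}f$ is pinned to $0$ on the central dual arc $\ell^*$ via the degree-zero Auslander--Reiten intersection (Proposition~\ref{prop:tau}), that the combinatorial winding number of the dual boundary walk equals the geometric one, and that the left and right lengths of $\tau^{mN}\P_{(\ell,f)}$ are exactly $\min_i\{s_i\omega_i N\}+v$ and $\max_i\{s_i\omega_i N\}+v'$ for $N$ large. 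Without these, the concentration claim is unsupported.

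Beyond that gap, there are three concrete errors. First, the choice of generator: the boundary segments (arcs between consecutive stops) do not form an admissible dissection and are not, in general, a split generator of $\mathcal{W}(\Sigma,M,\eta)$; the paper uses $A=\bigoplus_\ell P_\ell$ over an admissible dissection $\zD$, which is a genuine classical generator and which contains, by Lemma~\ref{lem:key2}, both $\tau$-periodic and non-$\tau$-periodic summands, a dichotomy your argument implicitly relies on. Second, your account of the extra entry $0\in\Omega$ is incorrect: it does not come from fully stopped boundary components (whose contribution to the entropy is in fact shown to be nil), but from the fact that the canonical oriented intersection from $\ell$ to $\tau^{mN}\ell$ coming from the Auslander--Reiten triangle always has degree exactly $0$, so the degree range always contains $0$. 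This is what forces $h_t(\tau)\ge 0$ for $t\ge 0$ even when every $\omega_i>0$. Third, your formula $\delta_t(G,\mathbb{S}^nG)=\sum_p e^{\deg(p)t}$ conflates the infimum-over-towers quantity $\delta_t$ with the $\Hom$-space sum of Theorem~\ref{thm:dhkk1} and has the wrong sign in the exponent (the DHKK formula uses $e^{-nt}$); this is compensated by an implicit sign flip in your degree convention, but as written the inequalities would not chain correctly, and one would also need to carefully bound the multiplicities of intersections (as in Corollary~\ref{cor:key1}) to conclude anything about the $\log$-growth rate rather than the raw count.
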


Partially wrapped Fukaya categories of marked graded surfaces are equivalent to the perfect derived categories of graded gentle algebras, and vice versa.
For a triple $(\Sigma, M,\eta)$ as above the authors of 
\cite{HKK17} 
construct collections of formal generators in $\mathcal{W}(\Sigma, M,\eta)$ whose endomorphism algebras are graded gentle algebras $A$, such that the perfect derived category $\dba$ of $A$ is equivalent to $\mathcal{W}(\Sigma, M,\eta)$. Conversely, for any homologically smooth graded gentle algebra $A$, in \cite{LP20} a triple $(\Sigma,M,\eta)$ is constructed such that $\mathcal{W}(\Sigma,M,\eta)$ is equivalent to $\dba$.
In this paper, we will mostly use ``algebraic'' terminology. In particular, we reformulate Theorem~\ref{thm:main1} in ``algebraic'' terms. 
Recall the AG-invariant~\cite{AG08, LP20} of $\dba$: it is the collection of pairs $\{(m_i, n_i)\}_{i=1,\ldots,b}$, where $n_i=m_i-\omega_i$. We set $\mathcal N=\{\frac{n_1}{m_1}, \ldots, \frac{n_b}{m_b},1\}$.

\begin{theorem'}
\label{thm:main1prime}
\emph{Let
$A$ be a homologically smooth graded gentle algebra, and $\mathbb{S}$ be the Serre functor on $\dba$. 
Suppose that $A$ is not derived equivalent to a hereditary algebra of type~$\mathbb A$. Then the entropy of $\mathbb{S}$ is given  by 
$$H_t(\mathbb{S}) = \left\{\begin{array}{ll}
						(\max  \mathcal N)t, & t\geq 0; \\ & \\
						(\min \mathcal N)t, & t\leq 0.
                        \end{array}\right.$$}
\end{theorem'}
In the case of a disc, the entropy of the partially wrapped Fukaya category is given in \cite{DHKK14}. Namely, in this case $\mathcal{W}(\Sigma, M, \eta)$ is triangle equivalent to the perfect derived category of a hereditary algebra of type $A_n$ for some $n$. In particular, it is fractionally Calabi--Yau of dimension $\frac{n-1}{n+1}$ and it is shown in  \cite{DHKK14} that the entropy  is given by $H_t(\mathbb{S})= \frac{n-1}{n+1}t=(1-\frac{2}{n+1})t$.  Note that in this case $\Sigma$ has $n+1$ stops in the boundary and the boundary has winding number $2$.

In the case that $\Sigma$ is an annulus with $m_1$ and $m_2$ marked points on the two boundary components $\partial_1$ and $\partial_2$ respectively,  we have that $\omega_1 = - \omega_2$. Suppose that $\omega = \omega_1 \geq 0$ then    
$$H_t(\mathbb{S}) = \left\{\begin{array}{ll}
(1 + \frac{\omega}{m_2})t, & t\geq 0; \\ & \\
(1- \frac{\omega}{m_1})t, & t\leq 0.
\end{array}\right.$$	
In particular, if  $\omega = 0$ then $\mathcal{W}(\Sigma, M, \eta)$ is triangle equivalent to the perfect derived category of a hereditary algebra of affine type $\Tilde{A}_{n}$ and we get $H_t(\mathbb{S}) = t$ which coincides with Theorem 2.16 in \cite{DHKK14}.

It is shown in \cite{DHKK14}  that,   for saturated triangulated categories, the entropy $H_t(F)$ of an exact endofunctor $F$ can be calculated in terms of dimensions of  Hom spaces and is given by the function $h_t(F) $ in equation (\ref{formula:hom}). In fact, the function $h_t(F)$ is defined for arbitrary (not necessarily saturated) Hom-finite triangulated category $\calt$. However, it is not clear whether in this generality $h_t(F)$ is equal to the categorical entropy $H_t(F)$.  

In this paper, in Theorem~\ref{thm:main1gentle}, we compute $h_t(F)$ for $F$ the Serre functor~$\SSS$ in 
the partially wrapped Fukaya category $\mathcal{W}(\Sigma, M, \eta)$, including those where  $\Sigma$ has fully stopped boundary components which we will refer to as stops in the interior of $\Sigma$. That is, we consider 
the perfect derived category $\dba$ of a (not necessarily homologically smooth) graded gentle algebra $A$.  
We note then that in the homologically smooth case, $h_t(\SSS)$ gives the categorical entropy $H_t(\SSS)$ of the Serre functor to deduce Theorem~\ref{thm:main1}.

\medskip
We now briefly discuss some consequences and relations of the entropy of the Serre functor to other notions of dimension of triangulated categories, such as the upper and lower Serre dimension \cite{EL21}, the Rouquier dimension \cite{R08}, and the global dimension of a triangulated category \cite{IQ23}. 

Let $\calt$ be an Ext-finite triangulated category with a Serre functor $\mathbb{S}$ and let $G$ and $G'$ be generators of $\calt$. The \emph{upper Serre dimension} and the \emph{lower Serre dimension}  of $\calt$ is defined  in \cite{EL21} as follows
$$\uSdim \calt=\limsup_{n\to +\infty} \frac{-e_-(G,\mS^n(G'))}n  \;\;\; \mbox{ and } \;\;\; \lSdim \calt=\liminf_{n\to +\infty} \frac{-e_+(G,\mS^n(G'))}n$$
 where we set \sloppy
$e_-(G,\mS^n(G'))=\inf\{i\mid \Hom^i(G,\mS^n(G'))\ne 0\}$ and where \sloppy
$e_+(G,\mS^n(G'))=\sup\{i\mid \Hom^i(G,\mS^n(G'))\ne 0\}.$
It is proven in \cite{EL21} that the Serre dimensions are well-defined, that is, they do not depend on the choice of generators.

\begin{theoremintro}
[Theorem~\ref{thm:Main2Section2}]\label{thm:main2} 
Let $\calt=\mathcal{W}(\Sigma, M, \eta)$ be the partially wrapped Fukaya category of $(\Sigma, M,\eta)$ (which now can have stops in the interior of $\Sigma$) and let  $\mathbb{S}$ be the Serre functor of $\calt$. Suppose further that  $\Sigma$ is not a disc with $\leq 1$ stops in the interior.
Then    
$$\uSdim \calt = (1-\min \Omega) \;\; \mbox{ and }\;\;  \lSdim \calt=(1-\max \Omega). $$
\end{theoremintro}

\begin{theorem'}
\label{thm:main2prime}
\emph{Let $A$ be a graded gentle algebra and $\SSS$ be the Serre functor on $\calt:=\dba$. 
Suppose that $A$ is not derived equivalent to a graded radical square zero Nakayama algebra. Then  
$$\uSdim \calt = \max \mathcal N \;\; \mbox{ and }\;\;  \lSdim \calt=\min \mathcal N. $$}
\end{theorem'}

\begin{remarkintro}
    Cases excluded in Theorems~\ref{thm:main2} and~\ref{thm:main2prime} correspond to each other and are treated in Section~\ref{section_examples}. Specifically, the geometrical model of the path algebra $kA_n$ of the linear quiver $A_n$ without relations (or with all quadratic relations) is a disc with $n+1$ stops on the boundary and no stops in the interior. Hence algebras with such geometrical models are derived equivalent to graded radical square zero \textbf{linear} Nakayama algebras. These algebras are considered in Example~\ref{example_1}. 
    
    Graded radical square zero \textbf{cyclic} Nakayama algebras are studied in Example~\ref{example_11}. Their geometric models are discs with one stop in the interior. Such marked graded surfaces are described by two parameters: the number of stops on the boundary and the winding number around the boundary. Any pair of parameters is realized by some graded radical square zero cyclic Nakayama algebra, hence any algebra with geometric model of this type is derived equivalent to one of the algebras from Example~\ref{example_11}.    

    In both cases, the results are given by 
    $$\uSdim \calt =  \lSdim \calt=1- \omega_1/m_1=n_1/m_1,$$
    where $m_1,n_1,\omega_1$ are associated with the only boundary component. So, in general the formulas from Theorems~\ref{thm:main2} and~\ref{thm:main2prime} are not valid.
\end{remarkintro}

The upper and lower Serre dimensions of several related algebras have been computed in \cite{E22}. This includes,  for example, the cases of the graded $n$-Kronecker quiver \cite[Theorem 1.3]{E22}, as well as some gentle algebras corresponding to an annulus with two or three marked points \cite[Section 9]{E22}. 

The Rouquier dimension of $\dba$ for gentle algebras (homologically smooth, non-graded) has been calculated in \cite{BD17}: it is $0$ if the algebra $A$ is derived equivalent to a hereditary algebra of type $\mathbb A$, and $1$ otherwise.

It follows from Theorem~\ref{thm:main2} that for partially wrapped Fukaya categories of surfaces with stops, Conjecture 1.2 in \cite{EL21} holds, see also (5) in~\cite{E22}. That is, we have the following

\begin{corollaryintro}\label{cor:intro}
    Let $\mathbb{S}$ be the Serre functor of the homologically smooth partially wrapped Fukaya category $\mathcal{W}(\Sigma, M, \eta)$. Then
  
        (1) Both $\uSdim$ and $\lSdim$ are rational numbers. 

        (2) The upper Serre dimension $\uSdim$ is non-negative. 

        (3) One has an upper bound $\mathrm{Rdim}\le \uSdim$ for the Rouquier dimension.
\end{corollaryintro}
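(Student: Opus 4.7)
The plan is to derive both statements as immediate consequences of the explicit closed-form description of the Serre dimensions supplied by Theorem~\ref{thm:main3}, with the disc case handled separately by invoking the calculation in \cite{DHKK14} recalled in the introduction. In other words, once one has the equalities $\uSdim\calt = 1-\min\Omega$ and $\lSdim\calt = 1-\max\Omega$, neither rationality nor non-negativity of $\uSdim$ requires any further geometric input.

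For part (1), the first step is to observe that the winding number $\omega_i$ of each boundary component $\partial_i\Sigma$ is an integer, while $m_i$ is a positive integer counting the stops on $\partial_i\Sigma$, so each ratio $\omega_i/m_i$ is a rational number. The distinguished element $0 \in \Omega$ is obviously rational, hence $\Omega$ is a finite subset of $\mathbb{Q}$ and both $\min\Omega$ and $\max\Omega$ lie in $\mathbb{Q}$. Applying the formulas of Theorem~\ref{thm:main3} then yields $\uSdim\calt, \lSdim\calt \in \mathbb{Q}$. The remaining case in which $\Sigma$ is a disc with $n+1$ stops gives the fractionally Calabi-Yau value $1-\tfrac{2}{n+1} \in \mathbb{Q}$ from \cite{DHKK14}.

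For part (2), the crucial observation is that $0$ is always an element of $\Omega$ by its very definition, and therefore $\min\Omega \le 0$. Substituting into the formula yields
\[
\uSdim\calt \;=\; 1 - \min\Omega \;\ge\; 1 \;>\; 0,
\]
which is in fact a strictly stronger statement than required. In the disc case one instead has $\uSdim\calt = 1-\tfrac{2}{n+1} \ge 0$ whenever the surface carries at least two stops, which is guaranteed by our standing assumption that every boundary component has at least one stop together with the condition that the Fukaya category be well-defined.

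Because everything reduces to reading off a single explicit formula, there is essentially no obstacle: all the substantive work lies in the proof of Theorem~\ref{thm:main3} itself, and the corollary is what one would call a sanity check confirming that the conjecture of Elagin-Lunts \cite{EL21} holds in this class of examples.
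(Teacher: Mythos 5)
Your proposal is correct and takes essentially the same route the paper intends: both claims are read off directly from the explicit formulas $\uSdim = 1-\min\Omega$ and $\lSdim = 1-\max\Omega$ of Theorem~\ref{thm:main3}, using that each $\omega_i/m_i$ is a ratio of an integer by a positive integer and that $0 \in \Omega$ forces $\min\Omega \le 0$. The paper does not write out a separate proof (it simply remarks that the corollary follows from Theorem~\ref{thm:main3}), and your handling of the disc case via the $1-\tfrac{2}{n+1}$ value from \cite{DHKK14} is a reasonable way to cover the excluded case of Theorem~\ref{thm:main3}.
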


Now we remark on the connection of entropy and the \emph{global dimension} $\gldim \calt$ of a triangulated category $\calt$ with Bridgeland stability conditions, which is defined in terms of stability conditions in  \cite{IQ23,Q23}. By \cite[Theorem 4.2]{KOT21}, for the perfect derived category $\rm{per}(A)$ of a smooth proper dg algebra $A$, it is shown that the global dimension of $\rm{per}(A)$ is bounded below by the upper Serre dimension.
Thus, it follows that for a partially wrapped Fukaya category of a surface with stops $\mathcal{W}(\Sigma, M, \eta)$,  we have that 
$$ 1-\min \Omega \leq \gldim \mathcal{W}(\Sigma, M, \eta).$$ 
Moreover, by  \cite[Theorem 6.4]{Q25} and Theorem \ref{thm:main2} in the case that $\Sigma$ is an annulus, the global dimension is given by the upper Serre dimension, that is 
$$  \uSdim \mathcal{W}(\Sigma, M, \eta) =  \gldim \mathcal{W}(\Sigma, M, \eta)   .$$
One could ask if the above equality holds for any surface $\Sigma$, for more details, see  \cite{Q25}.

\medskip

The authors of~\cite{DHKK14} ask (Question 4.1) whether the entropy is always algebraic: that is, whether the graph of the function $t\mapsto H_t$ is an algebraic curve over $\mathbb Q$ in coordinates $(e^t, e^{H_t})$. Note that the graph of the linear function $t\mapsto \alpha t$ is an algebraic curve in coordinates $(e^t, e^{\alpha t})$ if and only if $\alpha$ is rational. We are not sure if the union of several pieces of algebraic curves is an algebraic curve or not; however, we find it interesting to understand when the entropy of the Serre functor is given by a linear, and not a piecewise linear function. Here is the answer for 
the case of a partially wrapped Fukaya category of a surface with stops.

\begin{theoremintro}
[Theorem~\ref{thm:Main3Section2}]
\label{thm:main3}
Suppose $\calt$ is a homologically smooth partially wrapped Fukaya category of a surface $\Sigma$ with stops which is not a disc (or $\calt =\dba$ for a homologically smooth graded gentle algebra $A$ which is not derived equivalent to a hereditary algebra of type $\mathbb{A}$). Let  $H_t(\mathbb{S})$ be the entropy of the Serre functor $\mathbb{S}$  of $\calt$. Then the following statements are equivalent:

\begin{enumerate}
    \item  $H_t(\mathbb{S})$ is linear;
    \item[(1')] $\lSdim \calt=\uSdim\calt$;
    \item $H_t(\mathbb{S})=t$;
    \item[(2')]  $\lSdim \calt=\uSdim\calt=1$;
    \item $\omega_i=0$ for all boundary components of $\Sigma$;
    \item $m_i=n_i$ for all pairs $(m_i,n_i)$ in the AG-invariant;
    \item $\Sigma$ is an annulus and the winding number along the equator is zero; 
    \item $A$ is derived equivalent to a hereditary algebra of affine type $\Tilde{\mathbb A}$ with trivial grading;
    \item $A$ is derived equivalent to a $d$-representation infinite algebra  in the sense of \cite{HIO14}. 
    \label{claim4}
\end{enumerate}
\end{theoremintro}

We thank Calvin Pfeifer for pointing out statement \eqref{claim4} to us. Note that it is the $d$-representation infinite analogue of the results in \cite{HauglandJacobsenSchroll} and so the only higher representation finite and infinite gentle algebras are the algebras of Dynkin and affine type $\Tilde{\mathbb A}$. 
For the case of a disc, which corresponds to graded gentle algebras derived equivalent to hereditary algebras of Dynkin type $\mathbb{A}$,  the entropy of the Serre functor has been shown to be a linear function in \cite{DHKK14}.

\medskip
Finally, for $A$ a finite dimensional gentle algebra, we write   $[\SSS]$ for the endomorphism of $\calk_0 (\dba)$ induced by the Serre functor $\SSS$ and let $\rho([\SSS])$ be the spectral radius of~$[\SSS]$. Then following  \cite{G87,G03,Y87}, see also \cite{Og14}, $\log \rho([\SSS])$ could be considered to be the topological entropy of $\SSS$ and we show that a Gromov--Yomdin-like equality holds. Namely, denoting the categorical entropy of $\SSS$ by $H_{\rm{cat}}(\SSS):=H_0(\SSS)$, we show the following. 

\begin{theoremintro}
[Theorem~\ref{thm:categorical vs topological entropy}]\label{thm:main4}
Let $A$ be a finite dimensional gentle algebra of finite global dimension with Serre functor $\SSS$ in $\dba$ and let $[\SSS]: \calk_0 (\dba) \to \calk_0 (\dba)$ be the induced map on the  Grothendieck group of $\dba$. Then 
$$H_{\rm{cat}}(\SSS) = \log \rho([\SSS]).$$
\end{theoremintro}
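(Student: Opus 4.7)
The plan is to combine the explicit computation of the categorical entropy from Theorem~\ref{thm:main1gentle} with the general categorical Gromov--Yomdin-type inequality and a spectral symmetry of $[\SSS]$ coming from Serre duality. First I would observe that evaluating the piecewise linear formula of Theorem~\ref{thm:main1gentle} at $t=0$ (and using Theorem 2.16 of \cite{DHKK14} in the disc case) gives
$$h_{\rm{cat}}(\SSS) \;=\; h_0(\SSS) \;=\; 0.$$
Consequently the theorem reduces to showing that $\rho([\SSS])=1$, so that $\log \rho([\SSS])=0$.

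For the upper bound $\rho([\SSS])\le 1$, I would invoke the general inequality $h_0(F)\ge \log \rho([F])$ valid for any exact endofunctor $F$ of a triangulated category admitting a classical generator, which is the categorical analogue of the Gromov--Yomdin inequality established in \cite{DHKK14}. Since $A$ is a classical generator of $\dba$, applying this to $F=\SSS$ together with $h_0(\SSS)=0$ yields $\rho([\SSS])\le 1$.

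For the matching lower bound, the key point is that a finite dimensional gentle algebra which admits a Serre functor on $\dba$ must be homologically smooth, hence of finite global dimension; thus its Cartan matrix is invertible over $\Z$ and the matrix $E$ of the Euler form $\chi$ on $\calk_0(\dba)$ is non-degenerate. The defining identity $\chi(x,y)=\chi(y,[\SSS]x)$ of the Serre functor translates into $[\SSS]=E^{-T}E$, and a direct computation then gives $E\,[\SSS]^{-1}E^{-1}=[\SSS]^T$. Hence $[\SSS]$ and $[\SSS]^{-1}$ have the same spectrum, so the eigenvalues of $[\SSS]$ come in reciprocal pairs $\{\lambda,\lambda^{-1}\}$ with multiplicity. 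Combined with $\rho([\SSS])\le 1$ this forces every eigenvalue of $[\SSS]$ to satisfy $|\lambda|=1$, and hence $\rho([\SSS])=1$.

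The proof of Theorem~\ref{thm:main4} itself is therefore short: the main obstacle is not in the above linear-algebraic reduction but lies upstream, in the geometric computation of $h_0(\SSS)$ provided by Theorem~\ref{thm:main1gentle}, which is the substantive content of the paper. Given that input, the Gromov--Yomdin-type equality follows by combining the general categorical Gromov--Yomdin inequality with the Serre-induced reciprocity of the spectrum of $[\SSS]$ on the Grothendieck group.
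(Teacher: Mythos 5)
Your reduction to showing $\rho([\SSS])=1$ is the same first step as in the paper, and the identity $h_{\rm cat}(\SSS)=h_0(\SSS)=0$ coming from Theorem~\ref{thm:main1gentle} (including the disc case) is correct. Where you diverge is in how you establish $\rho([\SSS])=1$: the paper simply cites \cite[Theorem~6.17]{GM24}, which shows that the Coxeter polynomial of a gentle algebra is a product of cyclotomic polynomials, so that all eigenvalues of $\Psi=[\tau]$, and hence of $[\SSS]=-\Psi$, lie on the unit circle. Your route via the Gromov--Yomdin inequality $h_0(F)\ge\log\rho([F])$ plus a spectral reciprocity of $[\SSS]$ is a genuinely different argument and would be a nice alternative \emph{in the homologically smooth case}.

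However, the argument as written has a genuine gap in scope. The theorem is stated for \emph{all} finite dimensional gentle algebras, and your claim that ``a finite dimensional gentle algebra which admits a Serre functor on $\dba$ must be homologically smooth, hence of finite global dimension'' is false. Every finite dimensional gentle algebra is Iwanaga--Gorenstein by \cite{GeissReiten}, so the Nakayama functor restricts to an autoequivalence of $\kaa$ and provides a Serre functor on $\dba$, even when the global dimension is infinite (for example $k[x]/(x^2)$, or the two-cycle Nakayama gentle algebra with both quadratic relations). This matters twice over. First, the categorical Gromov--Yomdin inequality $h_0(F)\ge\log\rho([F])$ you invoke is proved (in \cite{DHKK14} and sharpened in \cite{KT19}) only for smooth proper dg categories; for a gentle algebra of infinite global dimension it is not available off the shelf, so your upper bound $\rho([\SSS])\le 1$ is not justified in the generality claimed. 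Second, the Euler form on $\calk_0(\dba)=\calk_0(\kaa)$ is the Cartan pairing of $A$, which \emph{can} be degenerate for non-smooth gentle algebras (e.g.\ the two-cycle example above has Cartan matrix $\bigl(\begin{smallmatrix}1&1\\1&1\end{smallmatrix}\bigr)$), so the formula $[\SSS]=E^{-T}E$ and the resulting reciprocity of the spectrum break down. Incidentally, your lower bound does not need Serre duality at all: since $\SSS$ is an autoequivalence, $[\SSS]\in GL_n(\mathbb{Z})$ has determinant $\pm 1$, so $\rho([\SSS])\ge 1$ automatically; the real content you are missing is the upper bound in the non-smooth case, which is precisely what the cyclotomicity theorem of \cite{GM24} supplies without any smoothness hypothesis.
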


The equality in the above theorem can be viewed as
a categorical generalization of the fundamental theorem on topological entropy due to
Gromov and Yomdin \cite{G87,G03,Y87}, which is conjectured to be true for autoequivalences on the derived category of a smooth projective variety, see details in \cite[Conjecture 5.3]{KT19} and \cite[Conjecture 2.14]{K17}.
This conjecture holds for some cases, see for example \cite{K17, KT19, KST20, F18b, O18, Y20, K23, LP23, H24} but is proved not to hold in general, with counter-examples given in  \cite{F18a}, see also \cite{O20, M21, BK23a, BK23b, KO23}. 

Upon finalizing this paper, we learned that at the same time Tomasz Ciborski has calculated the categorical and polynomial entropy of autoequivalences of derived discrete algebras \cite{C25}.  

\section*{Acknowledgments}
This paper is supported by the Fundamental Research Funds for the Central Universities (No. GK202403003), the NSF of China (Grant No. 12271321), the UKRI Horizon Europe guarantee award Motivic invariants and birational geometry of simple normal crossing degenerations EP/Z000955/1, and the DFG through the project SFB/TRR 191 Symplectic Structures in Geometry, Algebra and Dynamics (Projektnummer 281071066-TRR 191). 
The authors would like to thank Nathan Broomhead for very useful discussions,  Yang Han for valuable suggestions on the references for categorical entropy, and  Calvin Pfeifer for helpful discussions and feedback on an earlier version of the paper.

\section{Preliminaries}\label{Preliminaries}
\subsection{Entropy}
The entropy of an exact endofunctor $F$ of a triangulated category ${\calt}$ is introduced in \cite{DHKK14}. It is the asymptotically exponential growth rate of the complexity of $F$. In the following, we begin by briefly recalling some basic definitions.  Let $E_1,E_2$ be two objects in ${\calt}$, and $[1]$ be the shift functor in $\calt$.
The {\it complexity} of $E_2$ with respect to $E_1$ is the function $\zd_t(E_1,E_2): \mathbb{R}\to \mathbb{R}_{\ge 0}\cup\{\infty\}$ in the real variable $t$ given by
$$\zd_t(E_1,E_2):=\mathrm{inf}\left\{ \sum\limits_{i=1}^m e^{d_it}\ \left|\
\xymatrix@=3pt{
T_0 \ar[rr] & & T_1 \ar[dl] & \cdots & T_{m-1} \ar[rr] && T_m \ar[dl] \\
& E_1[d_1] \ar@{.>}[ul] &   &        && E_1[d_m] \ar@{.>}[ul] & }
\right.\right\}$$
where $T_0=0, T_m=E_2\oplus E'_2$ for some $E'_2\in {\calt},$ and $$T_{i-1}\to T_i\to E_1[d_i]\to T_{i-1}[1],\linebreak 1\le i\le m$$ 
are triangles in ${\calt}$.
By convention, $\zd_t(E_1,E_2):=0$ if $E_2=0$, and $\zd_t(E_1,E_2):=\infty$ if and only if $E_2$ is not in the thick triangulated subcategory $\thick_\calt E_1$ of ${\calt}$ generated by $E_1$.
Note that $\zd_0(E_1,E_2)$ is the least number of steps required to build $E_2$ from objects in $\{E_1[n]\ |\ n\in\mathbb{Z}\}$.

Usually, one considers a  triangulated category ${\calt}$ with a \emph{(split=classical) generator} $G$, that is, an object such that ${\calt}=\thick_{\calt} G$, or equivalently, for every object $E\in {\calt}$ there is an object $E'\in {\calt}$ and a tower of triangles in ${\calt}$
$$\xymatrix@=4pt{
0=T_0 \ar[rr] && T_1 \ar[rr] \ar[dl] && T_2 \ar[dl] & \cdots & T_{m-1} \ar[rr] && T_m=E\oplus E' \ar[dl] \\
& G[d_1] \ar@{.>}[ul] && G[d_2] \ar@{.>}[ul] &&&& G[d_m] \ar@{.>}[ul] & }$$
with $m\in\mathbb{Z}_{\ge 0}$ and $d_i\in\Z$ for all $1\le i\le m$. 

\begin{definition}\cite[Definition 2.5]{DHKK14}
\label{definition:entropy}
Let ${\calt}$ be a triangulated category with a generator $G$, and $F$ an endofunctor of ${\calt}$. The {\it entropy} of $F$ is the function $H_t(F):\mathbb{R}\to \mathbb{R}\cup\{-\infty\}$ in the real variable $t$ given by
$$H_t(F):=\lim\limits_{N\to\infty}\frac{1}{N}\log\zd_t(G,F^N(G)).$$    
\end{definition}

By~\cite[Lemma 2.5]{DHKK14}, entropy is well-defined as it does not depend on the choice of generator $G$. Moreover, for a triangulated category ${\calt}$ admitting a dg enhancement, the entropy can be calculated  as follows. 

\begin{theorem}\label{thm:dhkk1}\cite[Theorem 2.7]{DHKK14}
Let $\mathcal{T}$ be a triangulated category admitting a dg enhancement given by a saturated dg category, and $F$ an endofunctor of $\mathcal{T}$ admitting a dg lift. Then we have $H_t(F)=h_t(F)$, where
\begin{equation}\label{formula:hom}h_t(F)=\lim\limits_{N\to\infty}\frac{1}{N}\log\sum_{n\in\mathbb{Z}}\ \dim_k\Hom_{\calt}(G,F^N(G)[n])\cdot e^{-nt}.
\end{equation}
\end{theorem}

In this paper, we will consider the entropy $H_t(\SSS)$ of the Serre functor $\mathbb{S}$ of the homologically smooth and proper partially wrapped Fukaya category of a graded surface with stops, which is a triangulated category admitting a dg enhancement as in Theorem \ref{thm:dhkk1}. Thus,  if $\calt$ is a triangulated category with such a dg enhancement and a Serre functor, the following properties hold for an endofunctor $F$ of $\calt$. 

\begin{lemma} \label{lem:pre}
(1) \cite[Section 2.2]{DHKK14} $H_t(F^m)=m\cdot H_t(F)$ for any $m\in\mathbb{Z}_{>0}$;

(2) \cite[Lemma 2.7]{KST20} $H_t(F[m])=H_t(F)+mt$ for any $m\in\mathbb{Z}$;

(3) \cite[Lemma 2.11]{FFO21}  $H_t(F^{-1})=H_{-t}(F)$ if $F$ is an autoequivalence of $\mathcal{T}$.
\end{lemma}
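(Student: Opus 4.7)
The plan is to derive all three identities directly from the cohomological formula in Theorem~\ref{thm:dhkk1}, writing
$$\Phi_F(N,t) := \sum_{n\in\mathbb{Z}} \dim_k \Hom_{\calt}(G, F^N(G)[n])\, e^{-nt},$$
so that $h_t(F) = \lim_{N\to\infty} \frac{1}{N}\log \Phi_F(N,t)$ whenever the limit exists (which we may assume in the dg-enhanced setting under consideration, where submultiplicativity of complexity together with Fekete's lemma applies). Each of the three parts will then follow from an elementary algebraic manipulation of $\Phi_F(N,t)$, and what matters is only to make the bookkeeping explicit.

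For (1), I would simply observe that $(F^m)^N = F^{mN}$ gives $\Phi_{F^m}(N,t) = \Phi_F(mN,t)$. Hence
$$h_t(F^m) = \lim_N \frac{1}{N} \log \Phi_F(mN,t) = m \cdot \lim_N \frac{1}{mN} \log \Phi_F(mN,t) = m \cdot h_t(F),$$
where the last equality uses that, since the full limit defining $h_t(F)$ exists, its subsequence along $N' = mN$ converges to the same value.

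For (2), since the shift $[m]$ commutes with any exact functor, one has $(F[m])^N(G) = F^N(G)[mN]$. Substituting $n \mapsto n + mN$ in the defining sum pulls out a factor $e^{mNt}$:
$$\Phi_{F[m]}(N,t) = \sum_n \dim_k \Hom(G, F^N(G)[mN+n])\, e^{-nt} = e^{mNt}\, \Phi_F(N,t).$$
Taking $\frac{1}{N}\log$ and passing to the limit yields $h_t(F[m]) = mt + h_t(F)$.

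For (3), assuming $F$ is an autoequivalence, I would first transport along $F^N$ to rewrite $\dim_k \Hom(G, F^{-N}(G)[n]) = \dim_k \Hom(F^N(G), G[n])$, then apply Serre duality $\Hom(A,B) \cong \Hom(B,\mathbb{S}A)^*$ to get $\dim_k \Hom(G, \mathbb{S}(F^N G)[-n])$. Since $F$ is an autoequivalence it commutes canonically with $\mathbb{S}$, so $\mathbb{S}(F^N G) = F^N(\mathbb{S}G)$. Re-indexing $n \mapsto -n$ then flips $e^{-nt}$ to $e^{nt}$, producing
$$\Phi_{F^{-1}}(N,t) = \sum_n \dim_k \Hom(G, F^N(\mathbb{S}G)[n])\, e^{nt}.$$
Since $\mathbb{S}G$ is again a generator of $\calt$, the right-hand side is the cohomological expression for $h_{-t}(F)$, giving (3). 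The only non-routine step is this last identification, which requires the generator-independence of the limit in Theorem~\ref{thm:dhkk1}; I expect this to be the main obstacle, but it is standard and can be handled by a sandwich argument comparing $G$ and $\mathbb{S}G$ via finite resolutions of one in terms of the other, as in \cite{DHKK14}.
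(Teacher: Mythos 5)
Your proposal is correct, but note that the paper itself offers no proof of this lemma: it is quoted from the literature, with (1) cited from \cite[Section 2.2]{DHKK14}, (2) from \cite[Lemma 2.7]{KST20} and (3) from \cite[Lemma 2.11]{FFO21}. Your arguments essentially reproduce the standard proofs in those references: (1) is the subsequence argument using existence of the defining limit, (2) is the re-indexing $n\mapsto n+mN$ pulling out $e^{mNt}$, and (3) is exactly the Serre-duality argument of \cite{FFO21} (transport along the equivalence $F^N$, dualize, use that any autoequivalence commutes with $\mathbb{S}$, and flip $t\mapsto -t$). The one point that genuinely needs justification is the one you flag at the end of (3): after the manipulation the second slot contains the generator $\mathbb{S}G$ rather than $G$, so you need the two-generator form of Theorem~\ref{thm:dhkk1}, i.e.\ that $h_t(F)=\lim_N\frac1N\log\sum_n\dim_k\Hom(G_1,F^N(G_2)[n])e^{-nt}$ for any pair of generators $G_1,G_2$. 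This is indeed available (it is how the statement is formulated and proved in \cite{DHKK14}, and your sandwich argument via a finite tower expressing $\mathbb{S}G$ in $\thick(G)$ also works, since such a tower changes the sum only by a factor bounded independently of $N$ after absorbing the finitely many shifts into $e^{|n_i|\,|t|}$ constants), so there is no gap, only a step to make explicit.
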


\subsection{Graded marked surfaces and graded gentle algebras}\label{subsection:marked surfaces}

\begin{definition}
\label{definition:marked surface}
A \emph{marked surface} is a pair $(\Sigma,\calm)$, where
  
  (1) $\Sigma$ is a smooth real compact oriented surface with non-empty boundary $\partial=\partial\Sigma$ whose
  connected components are $\partial_1, \ldots, \partial_b$.
 
  (2) $\calm = \calm_{\gpoint} \sqcup \calm_{\rpoint}$ is a finite set of \emph{marked points} on $\Sigma$. The elements of~$\calm_{\gpoint}$  are on the boundary of $\cals$ and the elements of~$\calm_{\rpoint}$ can be both on the boundary or in the interior of $\Sigma$. Each boundary component contains at least one marked point, and the $\gpoint$-points and~$\rpoint$-points are alternating on them.   
  Elements of $\calm_{\gpoint}$ and $\calm_{\rpoint}$ will be respectively denoted by  symbols~$\gpoint$ and~$\rpoint$. 
\end{definition}

The marked points in $\calm_{\rpoint}$ correspond to the \emph{stops} used in \cite{HKK17}.
We only consider graded gentle algebras that are proper in this paper. Thus, there are no $\gpoint$-point in the interior of the surface.

\begin{definition}
\label{definition:arcs}
Let $(\Sigma,\calm)$ be a marked surface. 
\begin{enumerate}[\rm(1)]
 \item An \emph{arc} is a smooth map $[0,1]\to \Sigma$, with endpoints in~$\calm$ and not containing other marked points. 
 \item An \emph{$\gpoint$-arc} is an arc with endpoints in~$\calm_{\gpoint}$.
 \item An \emph{$\rpoint$-arc} is an arc with endpoints in $\calm_{\rpoint}$.
 \item A \emph{loop} is an arc whose endpoints coincide.
 \item A \emph{simple arc} is an arc without interior self-intersections.
 \end{enumerate}
\end{definition}

In order for some definitions and notations to be well-defined in the case of a loop, we will treat the unique endpoint of a loop as two distinct endpoints.
All arcs are considered up to isotopy in the class of arcs, and up to change of orientation, arcs are assumed to be non-contractible. All intersections of arcs are assumed to be transversal. Furthermore, we assume that all arcs are in \emph{minimal position}: that is, the number of intersections is the minimal one possible in their isotopy classes and there are no triple intersections. We fix orientation of the surface and the induced orientation of the boundary: when we follow the boundary in positive direction, the interior of the surface is on the left.

\begin{definition}\label{definition:addmissable dissections in prelimilary}
Let $(\Sigma,\calm)$ be a marked surface.

(1) A collection of simple arcs with endpoints in $\calm_{\gpoint}$ is called an \emph{admissible $\gpoint$-dissection}, if the arcs have no interior intersections and they cut the surface into polygons, each of which contains exactly one marked point from $\calm_{\rpoint}$.
 
(2) A collection of simple arcs with endpoints in $\calm_{\rpoint}$ is called an \emph{admissible $\rpoint$-dissection}, if the arcs have no interior intersections and they cut the surface into polygons each of which contains exactly one marked point from $\calm_{\gpoint}$.
\end{definition}

An admissible $\gpoint$-dissection will be denoted by $\zD$. Then it is not hard to see that (see \cite{OPS18}) for each $\zD$, there is a unique (up to homotopy) dual $\rpoint$-admissible dissection on $(\Sigma,\calm)$, which we will denote by $\zD^*$, such that each arc $\ell^*$ in $\zD^*$ intersects exactly one arc $\ell$ of $\zD$, and vice verca.

\begin{definition}\label{definition:addmissable dissections in prelimilary2}
Let $\zD$ and $\zD^*$ be admissible dissections on a marked surface $(\cals,\calm)$.

  (1) Let $q$ be a common endpoint of arcs $\ell_i, \ell_j$ in $\zD$, an \emph{oriented intersection} from $\ell_i$ to~$\ell_j$ at $q$ is an anti-clockwise angle from $\ell_i$ to $\ell_j$  based at $q$ such that the angle is in the interior of the surface. We call an oriented intersection a \emph{minimal oriented intersection} if it is not a composition of two oriented intersections of arcs in $\zD$.
  
  (2) Let $q$ be a common endpoint of arcs $\ell_i^*, \ell_j^*$ in $\zD^*$, an \emph{oriented intersection} from $\ell_i^*$ to~$\ell_j^*$ at $q$ is a clockwise angle from $\ell_i^*$ to $\ell_j^*$ based at $q$ such that the angle is in the interior of the surface. We call an oriented intersection a \emph{minimal oriented intersection} if it is not a composition of two oriented intersections of arcs in $\zD^*$.
\end{definition}

In this paper, we define a graded marked surface as follows.

\begin{definition}
\label{definition:graded marked surface}
A \emph{graded marked surface} is a quadruple 
$(\Sigma, \calm, \zD^*, G)$
where $(\Sigma, \calm)$ is a marked surface, $\zD^*$ is an admissible $\rpoint$-dissection, and   $G$ is a \emph{grading}, which is given by a map from the set of minimal oriented intersections in $\zD^*$ to $\mathbb{Z}$, sending a minimal oriented intersection   $\aaa$ to an integer $G(\aaa) $. 
\end{definition}

In \cite{HKK17}, a graded surface is a triple $(\Sigma, \calm, \eta)$, where $\eta$ is a line field on $\Sigma$ with $\calm$ as a set of stops. The line field induces a grading on each arc in an admissible dissection~$\zD$ (and~$\zD^*$), as well as a grading on each intersection of the arcs in $\zD$ (and $\zD^*$).
These two ways to define a grading on a surface are, in fact, equivalent. Indeed, it is shown in \cite{LP20} that a quadruple $(\Sigma, \calm, \zD^*, G)$ induces a line field $\eta$ on $\Sigma$ and thus gives rise to a triple $(\Sigma, \calm, \eta)$ which is unique up to diffeomorphism. 
To be more accurate, in \cite{LP20} it is assumed that there are no $\rpoint$-points in the interior of $\Sigma$, but the surface might have $\gpoint$-points in the interior, corresponding to unstopped boundary components. If the surface has interior $\rpoint$-points, one should work with the line field obtained from the surface $\Sigma'$ where in the surface $\Sigma$ one  replaces the interior $\rpoint$-points with interior $\gpoint$-points and then considers the full subcategory of the partially wrapped Fukaya category of $\Sigma'$ induced by the  closed curves of winding number zero (except for those around the interior $\gpoint$-points) and the $\gpoint$-arcs not connected to the interior $\gpoint$-points, for more details see \cite{HKK17, BSW24}.

Fix a field $k$.
Let $Q=(Q_0,Q_1)$ be a finite quiver, where $Q_0$ is the set of vertices and~$Q_1$ is the set of arrows. Let $k Q$ be the path algebra of $Q$, where we read composition of arrows from left to right, and $I\subset k Q$ be an ideal.
\begin{definition}
The path algebra with relations $k Q/I$ is called \emph{gentle} if the following conditions hold
\begin{enumerate}
    \item for any vertex $i\in Q_0$, there are at most two arrows starting in $i$ and 
     at most two arrows ending in $i$,
    \item for any arrow $\aaa\in Q_1$ from $i$ to $j$
    \begin{enumerate}
        \item if there are two arrows $\bbb_1,\bbb_2$ in $Q_1$ ending at $i$ then exactly one of  $\bbb_1\aaa,\bbb_2\aaa$ is in $I$,
        \item if there are two arrows $\ccc_1,\ccc_2$ in $Q_1$ starting at $j$ then exactly one of  $\aaa\ccc_1,\aaa\ccc_2$ is in $I$,
    \end{enumerate}
    \item $I$ is generated by paths of length $2$.
\end{enumerate}
Additionally, we will always assume that a gentle algebra is finite dimensional and connected.

A \emph{grading} on a gentle algebra is given by a function $G\colon Q_1\to \Z$. Such a grading turns~$k Q$ into a $\Z$-graded algebra.
\end{definition}

There is a bijection between the set of homeomorphism classes of graded marked surfaces with admissible $\rpoint$-dissections and the set of isomorphism classes of graded gentle algebras. Below, we recall the constructions. First, we construct a graded gentle algebra from a graded marked surface with dissection.

\begin{definition}\label{definition:gentle algebra from dissection}
For a graded marked surface $(\Sigma,M,\zD^*, G)$ and a field $k$, 
consider a graded $k$-algebra $A$ as the quotient of the path algebra $kQ$ of a quiver $Q$ by an ideal $I$ defined as follows:

	(1) The vertices of $Q$ are given by the arcs in $\zD^*$.
	
    (2) Each minimal oriented intersection $\aaa$ from $\ell^*_i$ to $\ell^*_j$ gives rise to an arrow from $\ell^*_i$ to $\ell^*_j$, which is still denoted by $\aaa$.
	
    (3) The ideal $I$ is generated by paths $\aaa\bbb:\ell^*_i\rightarrow \ell^*_j\rightarrow \ell^*_k$, where the common endpoints of $\ell^*_i$ and $\ell^*_j$, and the common endpoints of $\ell^*_j$ and $\ell^*_k$  giving rise to $\aaa$ and $\bbb$, respectively, all coincide.
     
     (4) The grading of $A$ is induced by the grading $G$, by associating to  each arrow $\aaa$  the corresponding integer $|\aaa|:=G(\aaa)$. 
\end{definition}

Now we construct a graded marked surface with dissection from a graded gentle algebra~$A$.

\begin{definition}
\label{def:Sigma-from-A}    
For a graded gentle algebra $A$, construct its geometric model $(\Sigma,M,\zD^*, G)$ as follows.
For any maximal path $a_1\to a_2\to\ldots\to a_n$ in $A$ take an $(n+1)$-gon, put $\rpoint$-points in all vertices, put a $\gpoint$-point on one edge, label all other edges by $a_1,\ldots,a_n$ going anti-clockwise from the $\gpoint$-point. Connect this point by non-intersecting curves to edges $a_1,\ldots,a_n$. For any vertex $a$ in $Q$, there is now one or two edges with the label $a$. If there are two, glue these edges, preserving orientation and matching the curves drawn on the two sides. If there is just one edge labeled $a$, then either $a$ is an endpoint in $Q$ or there is a unique arrow $\xxx$ starting at $a$, a unique arrow~$\yyy$ ending at $a$, and $\yyy\xxx\ne 0$ in $A$. In these two cases, take a $2$-gon, mark both vertices as $\rpoint$-points, label one edge with $a$, mark a $\gpoint$-point on the other edge, connect the marked point by a curve with the opposite edge $a$, and glue this edge to another edge labeled~$a$ as before. As a result, one gets a connected oriented surface with a marking, an admissible $\rpoint$-dissection (given by the glued edges), and its dual $\gpoint$-dissection (given by the drawn curves). The boundary of this surface is formed by the non-glued edges of polygons. Note that the arcs forming dissections correspond to the vertices of $Q$. 
Introduce a grading on the surface as follows:  any minimal oriented intersection between arcs forming $\rpoint$-dissection comes from an arrow $\xxx$ from $a_i\to a_{i+1}$ in $Q$, let $G(\xxx)$ be the degree of this arrow. 
\end{definition}

It is proved (see \cite{OPS18}, \cite{LP20}, as well as in the ungraded case \cite{S15}, \cite{BC21})  that these two constructions are inverse to each other and establish a bijection between graded marked surfaces with dissection and graded gentle algebras.

\subsection{Geometric models for the derived categories of graded gentle algebras}\label{subsection: derived categories and derived categories}

Let $A$ be a $\mathbb{Z}$-graded proper gentle algebra and let $\dba$ be the perfect derived category of dg-modules over $A$ viewed as a dg-algebra with zero differential.

In this subsection, we recall the geometric model for $\dba$ from \cite{OPS18}. In the following, let $(\Sigma,\calm,\zD^*,G)$ be the graded marked surface associated with $A$.
It is proved in \cite{OPS18} (see also in \cite{HKK17}) that the classification of the indecomposable objects in $\dba$ for gentle algebra $A$ with trivial grading can be generalized to the case of graded gentle algebras, that is, each indecomposable object in $\dba$ is either a \emph{homotopy string object} or a \emph{homotopy band object}. In this paper, we only consider generators of $\dba$ that are given by homotopy string objects. In the associated surface, these correspond to graded $\gpoint$-arcs.

Let $\za$ be an $\gpoint$-arc on $(\Sigma,\calm)$. After choosing a direction of $\za$, denote by $\bbp_0, \bbp_1,\cdots, \bbp_{n}$ the ordered set of polygons of $\zD^*$ that $\za$ successively crosses. Denote by $\zD^*\cap \za:=\{\ell^*_1,\cdots, \ell^*_{n}\}$ the ordered set of arcs in $\zD^*$ that successively intersect  $\za$ such that $\ell^*_i$ belongs to $\bbp_{i-1}$ and $\bbp_{i}$ for each $1 \leq i \leq n$.
Remark that we consider $\zD^*\cap \za$ as an ordered multi-set, that is, we distinguish the different occurrences of the same arc.

Note that for each $1\leq i \leq n-1$, there is a canonical non-zero path $\sigma_i$ in $A$ from $\ell^*_{i}$ to $\ell^*_{i+1}$ (or from $\ell^*_{i+1}$ to $\ell^*_{i}$), which is the composition of the arrows arising from the minimal oriented intersections in $\bbp_i$, provided that the unique $\gpoint$-point $p$ in $\bbp_i$ is to the left (respectively right) of $\za$.

A \emph{grading} of $\za$ is a function $f: \zD^*\cap \za \longrightarrow \mathbb{Z}$ that satisfies
\begin{equation}\label{equ:grading}
f(\ell^*_{i+1}) = \begin{cases}
              {f(\ell^*_i) -|\sigma_i|+1} & \textrm{if {$p$ is to the left of $\za$} in $\bbp_i$;}  \\
              f(\ell^*_i)+|\sigma_i|- 1 & \textrm{if {$p$ is to the right of $\za$} in $\bbp_i$,}
             \end{cases}
\end{equation}

for each $1\leq i \leq n-1$, where the grading $|\sigma_i|$ is determined by the grading map $G$, see Definition \ref{definition:gentle algebra from dissection}(4). We call the pair $(\za,f)$ a \emph{graded arc} on  $(\cals,\calm,\zD^*,G)$.
It is not hard to see that the definition of the grading is independent of the choice of the direction of the arc.

For a graded arc $(\za,f)$ and $n\in\Z$ we define its \emph{shift} as $(\za,f)[n]:=(\za, f-n)$.

Let $(\za,f)$ be a graded arc on $(\cals,\calm,\zD^*,G)$ and $A$ be the corresponding graded gentle algebra. 
In the following, we construct a perfect dg module $\P_{(\za,f)}$ over $A$.
Note that an arc $\ell^*\in\zD^*$ corresponds to a vertex in the quiver $Q=(Q_0,Q_1)$ of $A$. 
Denote by $P_{\ell^*}=A\cdot e_{\ell^*}$ the indecomposable projective graded left $A$-module  given by the vertex associated to $\ell^*$. Consider
the direct sum
\begin{equation}
\label{eq_P-direct-sum}
\P_{(\za,f)}=\bigoplus_{i=1}^nP_{\ell^*_i}[-d_i],
\end{equation}
where $\zD^*\cap \za=\{\ell^*_1,\cdots, \ell^*_{n}\}$ is the ordered set of arcs in $\zD^*$ that successively intersect~$\za$ and $d_i=f(\ell^*_i)$. 
For any $i=1,\ldots,n-1$ there is a path $\sigma_i$ in $A$ from $e_i$ to $e_{i+1}$ (respectively, from $e_{i+1}$ to $e_{i}$), which defines a homogeneous morphism $P_{\ell^*_{i}}[-d_i]\longrightarrow P_{\ell^*_{i+1}}[-d_{i+1}]$ (respectively, $P_{\ell^*_{i+1}}[-d_{i+1}]\longrightarrow P_{\ell^*_{i}}[-d_{i}]$) of degree $1$ if the $\gpoint$-point in $\bbp_i$ is to the left (respectively, right) of $\za$. We take all these morphisms together to define a differential on $\P_{(\za,f)}$.  In this way, we obtain a dg module $\P_{(\za,f)}$ over $A$, which will be viewed as an object in $\dba$ and which we refer to as a \emph{homotopy string object}.

Similarly to the above, to any \emph{graded closed curve} on $(\cals,\calm,\zD^*,G)$ one can associate a family of objects  in $\dba$ (parametrised by indecomposable $k[t,t^{-1}]$-modules) called \emph{homotopy band objects}. We will not need these objects; therefore refrain from further details.

\begin{proposition}[\cite{BM03}, {\cite[Theorem 4.3]{HKK17}}, {\cite[Theorem 2.13]{OPS18}}]
Indecomposable objects in $\dba$ are exactly homotopy string objects and homotopy band objects.
\end{proposition}

\begin{definition} Given a complex   $\P_{(\za,f)}$ as above, we call the minimal $f(\ell^*)$ the \emph{left length}, and call the maximal $f(\ell^*)$ the \emph{right length} of $\P_{(\za,f)}$. The \emph{length} of $\P_{(\za,f)}$ is defined as the right length minus the left length.
\end{definition}

Note that the length of a complex is always non-negative, while the left length and the right length of a complex are allowed to be any integer.

Furthermore, based on the results in \cite{ALP16}, it is proved in \cite[Theorem 3.3, Remark 3.8]{OPS18} that the morphisms in $\dba$ can be interpreted as the oriented intersections of graded $\gpoint$-arcs on the surface, which will be restated in the following. We start by introducing the notion of an oriented intersection for ungraded arcs. 

\begin{definition}
Let $\za$ and $\zb$ be two $\gpoint$-arcs on $(\cals,\calm)$ which intersect at a point $q$. An \emph{oriented intersection} from $\za$ to $\zb$ at $q$ is the anti-clockwise angle from $\za$ to $\zb$ based at $q$ such that the angle is in the interior of the surface, with the convention that if $q$ is an interior point of the surface then the opposite angles are considered equivalent.
\end{definition}

There are two possibilities for the position of $q$: $q$ is a boundary marked point or an interior point, see  Figure \ref{interior intersection}. A boundary intersection gives rise to only one oriented intersection $\aaa$ as in the left picture in Figure \ref{interior intersection} where $\aaa$ corresponds to an oriented intersection from $\alpha$ to $\beta$, while an interior intersection gives rise to two oriented intersections, namely an oriented intersection  $\aaa$ from $\alpha$ to $\beta$ and an oriented intersection $\bbb$ from $\beta$ to~$\alpha$ as in the right picture in Figure \ref{interior intersection}.

In order to consider the morphisms arising from an oriented intersection $\aaa$ from $\za$ to $\zb$ at $q$, we fix  gradings 
$f$ and $g$ of $\za$ and $\zb$ respectively and define the \emph{degree} of an oriented intersection as follows.

Consider the polygon $\bbp$ containing $q$, and choose arcs $\ell^*_1$ and $\ell^*_2$ in $\zD^*$  where $\za$ and $\zb$ respectively intersect the boundary of $\bbp$ such that the following condition is satisfied:  $\ell^*_1$ comes before $\ell^*_2$ in the anti-clockwise direction on the boundary of $\bbp$, if starting from the marked $\gpoint$-point in $\bbp$. See Figure~\ref{Figure2} for the possible cases, depending on whether $\alpha$ and $\beta$ have endpoints in $\bbp$ or not.
Note that the choice of $\ell^*_1$ and $\ell^*_2$ may be non-unique, but the degree does not depend on the choice.  

\begin{figure}[ht]
		{
\begin{tikzpicture}[>=stealth,scale=0.4]
			\draw[thick,-] (7.95,2.5) -- (1.85,6.85);
			\draw[thick,-] (7.95,2.5) -- (1.85,-1.85);
		
			\draw (1.3,7) node {$\za$};
            \draw [bend right,thick] (8.7,5) to (8.7,0);

			\draw[thick,red!60] (3,4.5) -- (5,6.5);

			\draw[thick,red!60] (3,.5) -- (5,-1.5);

			\draw[dashed,thick,red!60] (3,.5) -- (3,4.5);

			\draw[dashed,thick,red!60] (5,6.5) -- (8.5,4.5);
			\draw[dashed,thick,red!60] (5,-1.5) -- (8.5,.5);
			\draw (1.3,-2.2) node {$\zb$};
			
%			\draw (4,6.4) node[red!60] {$\ell^*_1$};
%			\draw (4,-1.2) node[red!60] {$\ell^*_2$};
			\draw (8.6,2.5) node {$q$};
%			\draw (5,2.5) node[red!60] {$\bbp$};
            \draw [bend left,thick,<-] (7.2,2) to (7.2,3);
			\draw (6.5,2.5) node {$\aaa$};
            \draw[thick,fill=white] (7.95,2.5) circle (0.15);
			\draw (12,0) node {};
\end{tikzpicture}
\begin{tikzpicture}[>=stealth,scale=0.4]
			\draw[ thick,-] (8,-.5) -- (0.15,6.85);
			\draw[ thick,-] (8,5.5) -- (0.15,-1.85);

			\draw (-.5,7) node {$\za$};

            \draw [bend right,thick] (8.7,5) to (8.7,0);
%            \draw (9.25,1) node {\tiny$\rpoint$};
%			\draw (9.25,4) node {\tiny$\rpoint$};

%%			\draw (3,6.5) node {\tiny$\rpoint$};
%			\draw[thick,dark-green] (7,6) -- (4,7.5);
%
%%			\draw (3,-1.5) node {\tiny$\rpoint$};
%			\draw[thick,dark-green] (7,-1) -- (4,-2.5);
			\draw[dashed,thick,red!60] (0,5) -- (0,-.2);

			\draw[thick,red!60] (0,5) -- (4,7.5);
			\draw[thick,red!60] (0,-.2) -- (4,-2.5);

			\draw[dashed,thick,red!60] (8,4) -- (4,7.5);
			\draw[dashed,thick,red!60] (8,1) -- (4,-2.5);

			\draw (-.5,-2) node {$\zb$};
			
%			\draw (1.3,6.5) node[red!60] {$\ell^*_1$};
%			\draw (1.3,-1.6) node[red!60] {$\ell^*_2$};
%			\draw (4.7,0) node {\tiny$p$};
%			\draw (2,2.5)node[red!60] {$\bbp$};

            \draw [bend left,thick,<-] (4.2,2) to (4.2,3);
			\draw (3.5,2.5) node {$\aaa$};
            \draw [bend left,thick,<-] (5.4,3) to (5.4,2);
			\draw (6.1,2.5) node {$\aaa$};
            \draw [bend left,thick,<-] (4.2,3) to (5.4,3);
			\draw (4.7,3.7) node {$\bbb$};
            \draw [bend left,thick,<-] (5.4,2) to (4.2,2);
			\draw (4.7,1.3) node {$\bbb$};

            \draw[thick,fill=white] (7.95,2.5) circle (0.15);

			\end{tikzpicture}}
			\caption{Two types of oriented intersections between $\gpoint$-arcs  }
            \label{interior intersection}
\end{figure}

\begin{figure}[ht]       
		\scalebox{.65}{
		\begin{tabular}{c c c c c}
			{\begin{tikzpicture} %1
				
				\foreach \u in {1,2,4} % random boundary
				\draw[dashed,thick, red!60] ({2*cos(360/4*\u-360/16)},{2*sin(360/4*\u-360/16)}) arc ({360/4*\u-360/16}:{360/4*\u+360/16}:2);

                \foreach \u in {3} % marked boundary
				\draw[line width=1] ({2*cos(360/4*\u-360/16)},{2*sin(360/4*\u-360/16)}) arc ({360/4*\u-360/16}:{360/4*\u+360/16}:2);
				
				\foreach \u in {1,2} % curves
				\draw[thick] ({2*cos(360/8+360/4*\u)}, {2*sin(360/8+360/4*\u)})--({2*cos(180+360/8+360/4*\u)}, {2*sin(180+360/8+360/4*\u)}); 
				
				%---laminates
				\foreach \u in {1,...,4} 
				\draw [line width=1, color=red] plot  [smooth, tension=1] coordinates {   ({2*cos(360/8+360/4*\u+360/16)}, {2*sin(360/8+360/4*\u+360/16)}) ({1.6*cos(360/8+360/4*\u)}, {1.6*sin(360/8+360/4*\u)}) ({2*cos(360/8+360/4*\u-360/16)}, {2*sin(360/8+360/4*\u-360/16)})};
				
				%arcs' labels
				\draw ({2.3*cos(360/8)}, {2.3*sin(360/8)}) node {$\beta$};
				\draw ({2.3*cos(-360/8)}, {2.3*sin(-360/8)}) node {$\alpha$};

                %laminates' labels
                \draw[red] ({1.5*cos(25)}, {1.5*sin(25)}) node {$\ell^*_2$};
				\draw[red] ({1.5*cos(-25)}, {1.5*sin(-25)}) node {$\ell^*_1$};
                \draw[red] ({1.5*cos(203)}, {1.5*sin(203)}) node {($\ell^*_2$)};
				\draw[red] ({1.5*cos(160)}, {1.5*sin(160)}) node {($\ell^*_1$)};
                
				% marked point
				\draw[fill=white] ({2*cos(270)},{2*sin(270)}) circle (2.5pt);
				%\draw (0,-2.3) node {$X$};

				\end{tikzpicture}
			} 
            &
            {\begin{tikzpicture} %2
				
				\foreach \u in {1,2,4} % random boundary
				\draw[dashed,thick, red!60] ({2*cos(360/4*\u-360/16)},{2*sin(360/4*\u-360/16)}) arc ({360/4*\u-360/16}:{360/4*\u+360/16}:2);

                \foreach \u in {3} % marked boundary
				\draw[line width=1] ({2*cos(360/4*\u-360/16)},{2*sin(360/4*\u-360/16)}) arc ({360/4*\u-360/16}:{360/4*\u+360/16}:2);
				
				\foreach \u in {1,2} % curves
				\draw[thick] ({2*cos(360/8+360/4*\u)}, {2*sin(360/8+360/4*\u)})--({2*cos(180+360/8+360/4*\u)}, {2*sin(180+360/8+360/4*\u)}); 
				
				%---laminates
				\foreach \u in {1,...,4} 
				\draw [line width=1, color=red] plot  [smooth, tension=1] coordinates {   ({2*cos(360/8+360/4*\u+360/16)}, {2*sin(360/8+360/4*\u+360/16)}) ({1.6*cos(360/8+360/4*\u)}, {1.6*sin(360/8+360/4*\u)}) ({2*cos(360/8+360/4*\u-360/16)}, {2*sin(360/8+360/4*\u-360/16)})};
				
				%arcs' labels
				\draw ({2.3*cos(360/8)}, {2.3*sin(360/8)}) node {$\alpha$};
				\draw ({2.3*cos(-360/8)}, {2.3*sin(-360/8)}) node {$\beta$};

                %laminates' labels
                \draw[red] ({1.5*cos(25)}, {1.5*sin(25)}) node {$\ell^*_1$};
				\draw[red] ({1.5*cos(157)}, {1.5*sin(157)}) node {$\ell^*_2$};
                
				% marked point
				\draw[fill=white] ({2*cos(270)},{2*sin(270)}) circle (2.5pt);

				\end{tikzpicture}
			} 
            
			&
            {\begin{tikzpicture} %3
				
				\foreach \u in {1,2,4} % random boundary
				\draw[dashed,thick, red!60] ({2*cos(360/4*\u-360/16)},{2*sin(360/4*\u-360/16)}) arc ({360/4*\u-360/16}:{360/4*\u+360/16}:2);

                \foreach \u in {3} % marked boundary
				\draw[line width=1] ({2*cos(360/4*\u-360/16)},{2*sin(360/4*\u-360/16)}) arc ({360/4*\u-360/16}:{360/4*\u+360/16}:2);
				
				\foreach \u in {1} % curves
				\draw[thick] ({2*cos(360/8+360/4*\u)}, {2*sin(360/8+360/4*\u)})--({2*cos(180+360/8+360/4*\u)}, {2*sin(180+360/8+360/4*\u)}); 

                \draw[thick] ({0}, {-2})--({2*cos(45)}, {2*sin(45)}); 
				
				%---laminates
				\foreach \u in {1,...,4} 
				\draw [line width=1, color=red] plot  [smooth, tension=1] coordinates {   ({2*cos(360/8+360/4*\u+360/16)}, {2*sin(360/8+360/4*\u+360/16)}) ({1.6*cos(360/8+360/4*\u)}, {1.6*sin(360/8+360/4*\u)}) ({2*cos(360/8+360/4*\u-360/16)}, {2*sin(360/8+360/4*\u-360/16)})};
				
				%arcs' labels
				\draw ({2.3*cos(360/8)}, {2.3*sin(360/8)}) node {$\alpha$};
				\draw ({2.3*cos(-360/8)}, {2.3*sin(-360/8)}) node {$\beta$};

                %laminates' labels
                \draw[red] ({1.5*cos(25)}, {1.5*sin(25)}) node {$\ell^*_1$};
				\draw[red] ({1.5*cos(157)}, {1.5*sin(157)}) node {$\ell^*_2$};
                
				% marked point
				\draw[fill=white] ({2*cos(270)},{2*sin(270)}) circle (2.5pt);

				\end{tikzpicture}
			} 
			&
			{\begin{tikzpicture} %4
				
				\foreach \u in {1,2,4} % random boundary
				\draw[dashed,thick, red!60] ({2*cos(360/4*\u-360/16)},{2*sin(360/4*\u-360/16)}) arc ({360/4*\u-360/16}:{360/4*\u+360/16}:2);

                \foreach \u in {3} % marked boundary
				\draw[line width=1] ({2*cos(360/4*\u-360/16)},{2*sin(360/4*\u-360/16)}) arc ({360/4*\u-360/16}:{360/4*\u+360/16}:2);
				
				\foreach \u in {1} % curves
				\draw[thick] ({2*cos(360/8+360/4*\u)}, {2*sin(360/8+360/4*\u)})--({2*cos(180+360/8+360/4*\u)}, {2*sin(180+360/8+360/4*\u)}); 

                \draw[thick] ({0}, {-2})--({2*cos(45)}, {2*sin(45)}); 
				
				%---laminates
				\foreach \u in {1,...,4} 
				\draw [line width=1, color=red] plot  [smooth, tension=1] coordinates {   ({2*cos(360/8+360/4*\u+360/16)}, {2*sin(360/8+360/4*\u+360/16)}) ({1.6*cos(360/8+360/4*\u)}, {1.6*sin(360/8+360/4*\u)}) ({2*cos(360/8+360/4*\u-360/16)}, {2*sin(360/8+360/4*\u-360/16)})};
				
				%arcs' labels
				\draw ({2.3*cos(360/8)}, {2.3*sin(360/8)}) node {$\beta$};
				\draw ({2.3*cos(-360/8)}, {2.3*sin(-360/8)}) node {$\alpha$};

                %laminates' labels
                \draw[red] ({1.5*cos(25)}, {1.5*sin(25)}) node {$\ell^*_2$};
				\draw[red] ({1.5*cos(-25)}, {1.5*sin(-25)}) node {$\ell^*_1$};
                
				% marked point
				\draw[fill=white] ({2*cos(270)},{2*sin(270)}) circle (2.5pt);

				\end{tikzpicture}
			} 
			&
			{\begin{tikzpicture} %5
				
				\foreach \u in {1,2,4} % random boundary
				\draw[dashed,thick, red!60] ({2*cos(360/4*\u-360/16)},{2*sin(360/4*\u-360/16)}) arc ({360/4*\u-360/16}:{360/4*\u+360/16}:2);

                \foreach \u in {3} % marked boundary
				\draw[line width=1] ({2*cos(360/4*\u-360/16)},{2*sin(360/4*\u-360/16)}) arc ({360/4*\u-360/16}:{360/4*\u+360/16}:2);

                %curves
                \draw[thick] ({0}, {-2})--({2*cos(45)}, {2*sin(45)}); 
                \draw[thick] ({0}, {-2})--({2*cos(135)}, {2*sin(135)}); 
				
				%---laminates
				\foreach \u in {1,...,4} 
				\draw [line width=1, color=red] plot  [smooth, tension=1] coordinates {   ({2*cos(360/8+360/4*\u+360/16)}, {2*sin(360/8+360/4*\u+360/16)}) ({1.6*cos(360/8+360/4*\u)}, {1.6*sin(360/8+360/4*\u)}) ({2*cos(360/8+360/4*\u-360/16)}, {2*sin(360/8+360/4*\u-360/16)})};
				
				%arcs' labels
				\draw ({2.3*cos(360/8)}, {2.3*sin(360/8)}) node {$\alpha$};
				\draw ({2.3*cos(135)}, {2.3*sin(135)}) node {$\beta$};

                %laminates' labels
                \draw[red] ({1.5*cos(25)}, {1.5*sin(25)}) node {$\ell^*_1$};
				\draw[red] ({1.5*cos(157)}, {1.5*sin(157)}) node {$\ell^*_2$};
                
				% marked point
				\draw[fill=white] ({2*cos(270)},{2*sin(270)}) circle (2.5pt);

				\end{tikzpicture}
			} 
	        \end{tabular}
	        }
\caption{Choice of the arcs $\ell^*_1$ and $\ell^*_2$ for the definition of the degree of an oriented intersection}    \label{Figure2}
\end{figure}

\begin{definition}
\label{def_degree-of-orint}
The \emph{degree} of the oriented intersection $\aaa$ from $(\alpha,f)$ to $(\beta,g)$ on a graded marked surface 
$(\cals,\calm,\zD^*,G)$
is
$$|\aaa|:=g(\ell^*_2)-f(\ell^*_1)+|\sigma|,$$
where $\sigma$ is a non-zero path in $A$  from  $\ell^*_1$  to $\ell^*_2$, defined as the composition of the arrows arising from 
minimal oriented intersections of arcs in $\Delta^*$ on the boundary of $\bbp$ between $\ell^*_1$  to $\ell^*_2$, and $|\sigma|$ is the degree of $\sigma$ (i.e. the sum of the corresponding values of $G$).
\end{definition}

Let $\P_{(\za,f)}$ and $\P_{(\zb,g)}$ be the two objects in $\dba$ associated with two graded $\gpoint$-arcs $(\za,f)$ and $(\zb,g)$ on the surface for some gradings $f$ and $g$. Then every oriented intersection from $\za$ to $\zb$ of degree $d$ corresponds to a morphism from $\P_{(\za,f)}$ to $\P_{(\zb,g)}[d]$ in $\dba$. One has 

\begin{proposition}\label{prop:obj-in-derived-cat}\cite[Theorem 3.3]{OPS18}
Assume that $\alpha$ and $\beta$ are in \emph{minimal position}: that is, the number of intersection points is  minimal in their isotopy classes. Then there is a bijection between a basis of the $\Hom$-space $$\Hom^\bullet(\P_{(\za,f)},\P_{(\zb,g)}):=\bigoplus\limits_{n\in \mathbb{Z}}\Hom(\P_{(\za,f)},\P_{(\zb,g)}[n]),$$
and the set of oriented intersections from $\alpha$ to $\beta$ where oriented intersections of degree $d$ correspond to  morphisms from $\P_{(\za,f)}$ to $\P_{(\zb,g)}[d]$. 
\end{proposition}

\begin{remark}
   Note that Proposition~\ref{prop:obj-in-derived-cat} applies to $\Hom$ spaces from an object $\P_{(\za,f)}$ to itself as well. For this one first replaces an arc $\alpha$ with an isotopic arc $\beta$ that is in minimal position with $\alpha$ and then applies the Proposition. In particular, exactly one of the endpoints is an oriented intersection from $\alpha$ to $\beta$, and it corresponds to the identity endomorphism.
\end{remark}

For $A$, a proper graded gentle algebra,  $\dba$ has a \emph{Serre functor} $\mathbb{S}$: that is, an auto-equivalence of $\dba$ such that for all $X,Y \in \dba$
$$\Hom(X,Y)\cong  D\Hom(Y, \mathbb{S}X),$$
where $D$ denotes the dual $k$-vector space.

In particular, 
we note that $\mathbb{S}=\tau[1]$, where $\tau$ is the Auslander--Reiten translation of $\dba$,  and   if $A$ is homologically smooth, we have $$H_t(\mathbb{S})=H_t(\tau)+t,$$
by Lemma \ref{lem:pre}(2).

Moreover, in \cite{OPS18} a geometric description of the Auslander--Reiten translate has been given as follows. 
Let $\P_{(\za,f)}$ be a homotopy string object in $\dba$. Denote by $\tau\za$ the $\gpoint$-arc obtained from $\za$ by the rotation of each of the endpoints to the next $\gpoint$-point on the same boundary component in the negative  direction: that is, so that the interior of the surface remains to the right. Then there is a canonical intersection, denoted~$q$, between $\tau \za$ and $\za$, see Figure \ref{fig:tau}. Note that this intersection may degenerate to a boundary intersection, this happens only when both  endpoints of $\za$ are located on the same boundary component.

\begin{figure}[ht]
	\centering 
	\begin{tikzpicture}[>=stealth,scale=.6]
			\draw[thick,black, fill=gray!40, line width=1.pt] (-6,0) circle (2);
			\draw[thick,black, fill=gray!40, line width=1.pt] (6,0) circle (2);
			
			\draw [line width=1.5pt, dark-green] (-4,0) to (4,0);	
			
			\draw [bend right, line width=1pt, ->] (-.5,0) to (-1.35,0.6);
			
			\draw[cyan,line width=1.5pt,black] (-6,2) to[out=20,in=120](-1,0)
			to[out=-60,in=180](2.5,-2.5)
			to[out=0,in=-160](6,-2);
			
			\draw[thick,black, fill=white] (8,0) circle (0.1);
			\draw[thick,black, fill=white] (-8,0) circle (0.1);
			\draw[thick,black, fill=white] (4,0) circle (0.1);
			\draw[thick,black, fill=white] (-4,0) circle (0.1);
			\draw[thick,black, fill=white] (-6,2) circle (0.1);
			\draw[thick,black, fill=white] (6,-2) circle (0.1);
			\draw[thick, fill=red ] (7.5,-1.35) circle (0.1);
			\draw[thick, fill=red ] (-7.5,1.35) circle (0.1);
			\draw[thick, fill=red ] (4.5,-1.35) circle (0.1);
			\draw[thick, fill=red ] (-4.5,-1.35) circle (0.1);
			\draw[thick, fill=red ] (-4.5,1.35) circle (0.1);
			\draw[thick, fill=red ] (4.5,1.35) circle (0.1);
			
			\draw (-1.4,-.5) node {$q$};
			\draw (1.5,-1.8) node {$\tau$$\za$};
	    	\draw (-.5,.9) node {$\aaa$};
			\draw (1.5,0.5) node[dark-green] {$\za$};
		\end{tikzpicture}
	\caption{Auslander--Reiten translate of an arc} 
	\label{fig:tau}
\end{figure}
Denote by $\tau f$ the grading of $\tau \za$ such that the canonical oriented intersection $\aaa$ from $(\za,f)$ to $(\tau \za, \tau f)$ associated with $q$ has degree one. This grading can also be described as follows: the values of $\tau f$ on two intersections of $\tau \za$ with arcs in $\Delta^*$ that are the closest to $q$ are equal to the values of $f$ on the intersections of $\alpha$ with these arcs.  Building on \cite{B11, ALP16}, the  following geometric interpretation of the Auslander--Reiten translation $\tau$ in $\dba$ is given in \cite{OPS18}.

\begin{proposition}\cite[Corollary 5.4]{OPS18}\label{prop:tau}
For a homotopy string object $\P_{(\za,f)}$ in $\dba$, the object $\tau\P_{(\za,f)}$ is isomorphic to $\P_{(\tau\za,\tau f)}$. Furthermore, the oriented intersection $\aaa$ gives rise to the canonical homomorphism from $\P_{(\za,f)}$ to $\tau\P_{(\za,f)}[1]$ that appears in the associated Auslander--Reiten triangle. 
\end{proposition}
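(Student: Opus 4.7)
The plan is to establish both parts of the statement by exploiting the identity $\tau = \mathbb{S}[-1]$ and the fact that the Serre functor is uniquely characterized by Serre duality $\Hom(X,Y)\cong D\Hom(Y,\mathbb{S}X)$. So it suffices to produce a natural perfect pairing between $\Hom^\bullet(\P_{(\zb,g)},\P_{(\tau\za,\tau f)}[1])$ and $\Hom^\bullet(\P_{(\za,f)},\P_{(\zb,g)})$ for every graded $\gpoint$-arc $(\zb,g)$, and to identify the element Serre dual to the identity of $\P_{(\za,f)}$ as the morphism associated with the oriented intersection $\aaa$ at the canonical intersection point $q$.

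First, I would build a bijection, local to each endpoint of $\za$, between oriented intersections of $\zb$ with $\za$ and oriented intersections of $\zb$ with $\tau\za$. Fix a boundary component $\partial_i\Sigma$ containing an endpoint $p$ of $\za$, and let $p'$ be the next $\gpoint$-point on $\partial_i\Sigma$ in the anti-clockwise direction, so that $p'$ is the corresponding endpoint of $\tau\za$. An oriented intersection of $\zb$ with $\za$ at $p$ is an anti-clockwise angle into the interior based at $p$; rotating both the endpoint and the arcs meeting there by the anti-clockwise motion that sends $p$ to $p'$ produces an oriented intersection of $\tau\za$ with $\zb$ at $p'$, and conversely. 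This bijection is the combinatorial shadow of the connecting morphism of the AR triangle and is well-defined even when $q$ degenerates to a boundary intersection (both endpoints of $\za$ on the same component); the loop case is handled by applying the bijection to each of the two half-edges separately.

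Second, I would verify that the bijection reverses the direction of morphisms and increases total degree by $1$. Using Equation~\eqref{equ:grading} to compute the grading of $\tau\za$ from the normalization that $|\aaa|=1$, and using the degree formulas \eqref{equ:morph1}, \eqref{equ:morph2}, \eqref{equ:morph22}, a direct local calculation at $p$ shows that if $\bbb$ from $\zb$ to $\za$ has degree $d$ then the rotated oriented intersection $\bbb'$ from $\tau\za$ to $\zb$ has degree $1-d$. Combined with Proposition~\ref{prop:obj-in-derived-cat}, which says that such oriented intersections form a basis of $\Hom^\bullet$, this yields the desired perfect pairing with values in degree $1$, proving that $\tau\P_{(\za,f)}\cong \P_{(\tau\za,\tau f)}$.

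Third, the Serre-dual element to $\mathrm{id}_{\P_{(\za,f)}}\in\Hom^0(\P_{(\za,f)},\P_{(\za,f)})$ under this pairing is, by construction of the local bijection at $q$, the degree-one morphism $\psi_\aaa\colon\P_{(\za,f)}\to\P_{(\tau\za,\tau f)}[1]$. Since the identity is non-split and Serre duality sends it to the almost split connecting morphism (up to nonzero scalar), $\psi_\aaa$ is the connecting morphism of the AR triangle. The main obstacle in this program is making the local rotation bijection globally coherent and checking the degree count in all the boundary/interior configurations at $q$, in particular when $\za$ is a loop or when the two endpoints of $\za$ lie on the same component so that $q$ becomes a boundary intersection; these degenerate cases require tracking $\tau f$ at both half-edges of the loop and handling oriented intersections that meet the same $\rpoint$-point.
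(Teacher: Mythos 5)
The paper does not prove this statement itself: it is quoted as \cite[Corollary 5.4]{OPS18}, and the sentence preceding the proposition explains that the cited result is obtained by building on the explicit almost split triangles for gentle algebras from \cite{B11} and the morphism description from \cite{ALP16}. Your Serre-duality route is a genuinely different strategy and is not unreasonable, but as written it has two substantive gaps.

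The central problem is in the construction of the local bijection. You describe it as ``rotating both the endpoint and the arcs meeting there by the anti-clockwise motion that sends $p$ to $p'$.'' Applying such a rotation to the local picture moves $\zb$ as well, so what this actually produces is a bijection between oriented intersections of $\za$ with $\zb$ at $p$ and oriented intersections of $\tau\za$ with a \emph{rotated} copy of $\zb$ at $p'$, not with the original $\zb$. For the Serre pairing you need $\Hom^\bullet(\P_{(\zb,g)},\P_{(\tau\za,\tau f)})$ with $\zb$ held fixed. The correct construction has to track how the intersection locus of the fixed arc $\zb$ with $\za$ deforms as $\za$ is wound anticlockwise past $p$: a boundary intersection at $p$ whose oriented intersection points from $\za$ to $\zb$ is swept into an interior intersection of $\tau\za$ with $\zb$ near $p$; one pointing from $\zb$ to $\za$ simply disappears; and a new boundary intersection can appear at $p'$ when $\zb$ ends there. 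These cases behave differently, and ``rotate everything'' does not distinguish them. In addition, you frame the bijection as ``local to each endpoint of $\za$,'' but the basis of $\Hom^\bullet(\P_{(\za,f)},\P_{(\zb,g)})$ from Proposition~\ref{prop:obj-in-derived-cat} also contains morphisms coming from interior intersections of $\za$ and $\zb$ far from $\za$'s endpoints. There the complementary-degree pairing is already built into \eqref{equ:morph2}--\eqref{equ:morph22} via $|\aaa|+|\bbb|=1$, but one still has to check that the normalization of $\tau f$ (fixed by $|\aaa|=1$ at $q$) does not introduce a global degree shift at those crossings; this is not addressed.

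The second gap is structural: even granting a perfect pairing $\Hom^d(\P_{(\za,f)},\P_{(\zb,g)})\times\Hom^{1-d}(\P_{(\zb,g)},\P_{(\tau\za,\tau f)})\to k$ for every graded arc $(\zb,g)$, this only gives degreewise isomorphisms of Hom spaces with $\mathbb{S}\P_{(\za,f)}[-1]$; to conclude $\tau\P_{(\za,f)}\cong\P_{(\tau\za,\tau f)}$ you either need naturality in $\zb$ (to apply Yoneda) or an argument that indecomposables of $\ka$ are determined by their graded Hom spaces against a fixed generating set in this Krull--Schmidt category. The cited proof avoids this entirely by computing the almost split triangle directly and reading off the connecting morphism, which also handles your third step (identifying $\psi_\aaa$ as the AR connecting map) at no extra cost.
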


\section{Main results}\label{Proofs of Theorems 1 and 2}

In this section, we give proofs of the main results. We assume that $A$ is a graded gentle algebra, and $(\cals,\calm,\zD^*,G)$ is its geometric model. We start by showing some key technical results. 

\subsection{Key technical results}

Let $\ell$ be a $\gpoint$-arc in $\zD$ and $0$ be the trivial grading on $\ell$: its value at the only intersection $\ell^*=\zD^*\cap \ell$ is $0$. Denote by $\P_{(\ell,0)}$ the corresponding object in $\dba$. 
We call $\P_{(\ell,0)}$ \emph{$\tau$-periodic}, if there are integers $m>0$ and $r$ such that $\tau^m\P_{(\ell,0)}\cong\P_{(\ell,0)}[r]$. Note that $\P_{(\ell,0)}$ is $\tau$-periodic exactly if $\P_{(\ell,0)}$ is a fractionally Calabi--Yau object, since $\SSS = \tau[1]$.
The following has been shown in \cite[Proposition 2.16]{O19}, for completeness we include a short proof here.

\begin{lemma}\label{lem:key2}
The object $\P_{(\ell,0)}$ is $\tau$-periodic if $\ell$ is isotopic to a boundary segment of~$\Sigma$. Furthermore, if $\Sigma$ is not a disc or a disc with one $\rpoint$-point in the interior, then for each boundary component $\partial_i$, there exists at least one $\gpoint$-arc $\ell$ in $\zD$ with at least one endpoint on $\partial_i$, and such that $\ell$ is not isotopic to a boundary segment.
\end{lemma}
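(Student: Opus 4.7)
The plan is to prove the two statements separately using the geometric description of $\tau$ from Proposition~\ref{prop:tau} and a topological argument about the dissection $\zD$. For the direction ``boundary segment $\Rightarrow$ $\tau$-periodic'' of the first statement: if $\ell$ is a boundary segment on $\partial_i\Sigma$, then by Proposition~\ref{prop:tau}, $\tau\ell$ is the neighboring boundary segment obtained by rotating both endpoints to their anti-clockwise neighbors. Iterating $m_i$ times returns both endpoints to their starting positions, so $\tau^{m_i}\ell$ is isotopic to $\ell$; the gradings $f$ and $\tau^{m_i}f$ then differ by a global integer shift $r$, yielding $\tau^{m_i}\P_{(\ell,f)}\simeq\P_{(\ell,f)}[r]$ and hence $\tau$-periodicity.

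For the converse direction, suppose $\tau^m\P_{(\ell,f)}\simeq\P_{(\ell,f)}[r]$, so $\tau^m\ell$ and $\ell$ are isotopic as graded arcs. The key dynamical input I would use is that, at each boundary component $\partial_i\Sigma$ containing an endpoint of $\ell$, a full boundary rotation $\tau^{m_i}$ acts, up to isotopy, as a Dehn twist along a simple closed curve parallel to $\partial_i\Sigma$. Since such a Dehn twist fixes an arc up to isotopy only when the arc is isotopically disjoint from the twist curve, the periodicity $\tau^m\ell\simeq\ell$ forces $\ell$ to be boundary-parallel at each endpoint; combined with $\ell$ being a simple $\gpoint$-arc, this forces $\ell$ to be isotopic to a boundary segment.

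For the second statement, assume $\Sigma$ is not a disc and, for contradiction, that every arc of $\zD$ incident to $\partial_i\Sigma$ is a boundary segment. I would first argue that every polygon of $\zD$ containing an $\rpoint$-point on $\partial_i\Sigma$ must be a bigon: a larger such polygon would, under the assumption, wrap around $\partial_i\Sigma$ via consecutive boundary segments, and its boundary would be isotopic to $\partial_i\Sigma$ --- but the polygon is a disc, forcing $\partial_i\Sigma$ to bound a disc in $\Sigma$ and hence $\Sigma$ to be a disc, a contradiction. Thus all $m_i$ boundary segments of $\partial_i\Sigma$ appear in $\zD$ and together form a simple closed curve $C$ bounding an annular collar $N$ of $\partial_i\Sigma$ subdivided into $m_i$ bigons. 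Since by assumption no arc of $\zD$ has an endpoint at a $\gpoint$-corner of $C$ besides the two adjacent boundary segments, the cell of $\zD$ in $\Sigma\setminus N$ adjacent to $C$ has boundary exactly $C$; being a polygon it is a disc, so $\Sigma=N\cup(\text{disc})$ is itself a disc, a contradiction. Therefore some arc of $\zD$ incident to $\partial_i\Sigma$ is not a boundary segment, and by the first part, the associated object is not $\tau$-periodic.

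The main technical obstacle I expect is the converse direction of the first statement: rigorously establishing that $\tau^{m_i}$ acts as a boundary Dehn twist, and using this to rule out spurious periodicity of non-boundary-parallel arcs, requires a careful geometric argument, best handled either by passing to the universal cover or by tracking the growth of intersection numbers of $\tau^n\ell$ with a fixed transverse essential curve.
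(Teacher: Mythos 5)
Your argument is correct and follows the same overall strategy as the paper's (very terse) proof: both parts hinge on the geometric description of $\tau$ in Proposition~\ref{prop:tau} plus elementary surface topology of the dissection $\zD$. The paper dispatches the forward and converse directions of the first statement with a one-line appeal to Proposition~\ref{prop:tau}, and proves the second statement by noting that if no arc of $\zD$ had exactly one endpoint on $\partial_i\Sigma$, the dissection would be disconnected. Your write-up supplies the details the paper leaves implicit, so it is not really a different route, but a more careful one.

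Two small remarks. First, the claim that $\tau^{m_i}$ ``acts as a Dehn twist along a curve parallel to $\partial_i\Sigma$'' is not literally true when $\ell$ has endpoints on two distinct boundary components $\partial_i\Sigma$, $\partial_j\Sigma$: $\tau$ rotates \emph{both} endpoints, so $\tau^{m_i}$ performs a full rotation at $\partial_i\Sigma$ but only a partial one at $\partial_j\Sigma$. The fix you need is cheap: if $\tau^k\ell\simeq\ell$, then also $\tau^{k\cdot\mathrm{lcm}(m_i,m_j)}\ell\simeq\ell$, and the latter power \emph{is} a genuine multi-twist $T_{C_i}^{a_i}T_{C_j}^{a_j}$ about disjoint boundary-parallel curves; the standard fact that a nontrivial multi-twist about disjoint curves fixes an arc only when the arc has zero geometric intersection with each twist curve then forces $\ell$ to lie in a collar, i.e. to be a boundary segment. (Equivalently, one can directly observe, as the paper does implicitly with Figure~\ref{fig:silsj}, that the number of intersection points of $\ell$ with $\tau^{mN}\ell$ grows linearly in $N$ unless $\ell$ is boundary-parallel.) Second, your collar argument for the second statement is actually more robust than the paper's one-sentence appeal to disconnectedness of $\zD$: when $\Sigma$ has a single boundary component and positive genus, every arc has both endpoints on $\partial_1\Sigma$, so ``the dissection would decompose'' is not the obstruction; your version, in which the complementary cell adjacent to the cycle of boundary segments would have to be a disc, forcing $\Sigma$ to be a disc, handles this case cleanly and uniformly.
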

\begin{proof}
The first statement follows from Proposition \ref{prop:tau}. If $\ell$ is isotopic to a boundary segment of the boundary component $\partial_i$, then  $\tau^{m_i}\ell\sim \ell$, where $m_i$ is the number of  $\gpoint$-points on $\partial_i$,  and $\tau^{m_i}\P_{(\ell,0)}\cong\P_{(\ell,0)}[r]$ for some integer $r$. Thus $\P_{(\ell,0)}$ is $\tau$-periodic. 
For the second statement, note that any $\gpoint$-arc is isotopic to a chain of several $\gpoint$-arcs from $\Delta$. Since $\Sigma$ is connected, any two boundary components can be connected by a $\gpoint$-arc and hence by a chain of 
$\gpoint$-arcs from $\Delta$. Therefore, if  $\Sigma$ has at least two boundary components, then any boundary component can be connected by a $\gpoint$-arc from $\Delta$ to some other boundary component. This arc is obviously not isotopic to a part of the boundary. Otherwise, assume that $\partial$ is connected and all $\gpoint$-arcs in $\Delta$ are isotopic to a part of $\partial$. Then all  $\gpoint$-arcs are isotopic to a part of $\partial$, it follows that the fundamental group of $\Sigma$ (with $\rpoint$-points removed) is generated by $\partial$ and hence is cyclic. It follows that $\Sigma$ is a disc or a disc with one $\rpoint$-point in the interior. 
\end{proof}

\begin{remark}
    We do not explain here that the object $\P_{(\ell,0)}$ is $\tau$-periodic \textbf{only if} $\ell$ is isotopic to a boundary segment of $\Sigma$, but this will automatically follow from the  considerations below.
\end{remark}

%If $A$ is derived equivalent to a graded radical square Nakayama algebra then the corresponding surface is a disc (if $A$ is linear) or a disc with one $\rpoint$-point in the interior (if $A$ is cyclic) and the arc of every projective indecomposable module is isotopic to a boundary segment. In all other cases there exists a projective indecomposable module with an arc not isotopic to a boundary component.

Let $\ell$ be a $\gpoint$-arc in $\zD$ with endpoints $p_i$ and $p_j$ on two boundary components $\partial_i$ and $\partial_j$, respectively. For $s_i, s_j>0$, denote by $^{[s_i]}\ell^{[s_j]}$ the $\gpoint$-arc obtained from $\ell$ by a rotation of the endpoints of $\ell$ given by wrapping $s_i$ times around $\partial_i$ and~$s_j$ times around $\partial_j$, respectively, in the negative direction.

First, assume that $i\ne j$, that is, the endpoints of $\ell$ belong to \textbf{distinct} components of the boundary.
There are $s_i+s_j+1$ intersection points $q_{-s_i}$, $q_{-s_i+1}$ $\cdots$, $q_{-1}$, $q_0$, $q_1$, $\cdots$, $q_{s_j-1}$, $q_{s_j}$ between $\ell$ and $^{[s_i]}\ell^{[s_j]}$,
see Figure \ref{fig:silsj}.
\begin{figure}[ht]%first
	\centering 
		\begin{tikzpicture}[>=stealth,scale=0.8]
			
			\draw[thick,black, fill=gray!40, line width=1.pt] (-6,0) circle (2);
		    \draw[thick,black, fill=gray!40, line width=1.pt] (6,0) circle (2);
			
			\draw [line width=1.5pt, dark-green] (-4,0) to (4,0);

			\draw [line width=1.pt, red!60] (-4.5,-1.35) to (-3,-3.4);
			\draw [line width=1.pt, red!60] (4.5,-1.35) to (3,-3.4);
			\draw [line width=1.pt, red!60, dotted] (0,-3.9) to (-3,-3.4);
			\draw [line width=1.pt, red!60, dotted] (0,-3.9) to (3,-3.4);
			
			\draw [line width=1.pt, red!60] (-4.5,1.35) to (-3,3.4);
			\draw [line width=1.pt, red!60] (4.5,1.35) to (3,3.4);
			\draw [line width=1.5pt, red!60, dotted] (0,3.9) to (-3,3.4);
			\draw [line width=1.pt, red!60, dotted] (0,3.9) to (3,3.4);
			
			\draw [line width=1.pt, red!60] (0,3.9) to (0,-3.9);

			\draw [bend right, line width=1.pt, ->] (-2.4,0) to (-2.85,0.6);
%			\draw [bend right, line width=1.pt, ->] (-2.55,-0.8) to (-2.2,0);
			\draw [bend right, line width=1.pt, ->] (2.9,0) to (2.5,0.6);
%			\draw [bend right, line width=1.pt, ->] (2.72,-0.8) to (2.9,0);
			\draw [bend right, line width=1.pt, ->] (-0.6,0) to (-1.3,0.6);
%			\draw [bend right, line width=1.pt, ->] (-0.7,-0.5) to (-0.6,0);

			\draw[cyan,line width=1.5pt,black] (-4,0) to[out=-80,in=0](-6,-2.4)
			to[out=180,in=-90](-8.4,0)
			to[out=90,in=180](-6,2.4)
			to[out=0,in=90](-3.6,0)
			to[out=-90,in=0](-6,-2.8)
			to[out=180,in=-90](-8.8,0)
			to[out=90,in=180](-6,2.8)
			to[out=0,in=90](-3.2,0)
			to[out=-90,in=0](-6,-3.2)
			to[out=180,in=-90](-9.2,0)
			to[out=90,in=180](-6,3.2)
			to[out=0,in=90](-2.8,0)
			to[out=-90,in=0](-6,-3.6)
			to[out=180,in=-90](-9.6,0)
			to[out=90,in=180](-6,3.6)
			to[out=0,in=120](-1,0)
			;
			
			\draw[cyan,line width=1.5pt,black] (4,0) to[out=120,in=180](6,2.4)
			to[out=0,in=90](8.4,0)
			to[out=-90,in=0](6,-2.4)
			to[out=180,in=-90](3.2,0)
			to[out=90,in=180](6,2.8)
			to[out=0,in=90](8.8,0)
			to[out=-90,in=0](6,-2.8)
			to[out=180,in=-90](2.4,0)
			to[out=90,in=180](6,3.2)
			to[out=0,in=90](9.2,0)
			to[out=-90,in=0](6,-3.3)
			to[out=180,in=-30](1,-2)
			to[out=150,in=-60](-1,0)
			;

			\draw[thick,black, fill=white] (8,0) circle (0.1);
			\draw[thick,black, fill=white] (4,0) circle (0.1);
			\draw[thick,black, fill=white] (-4,0) circle (0.1);
			\draw[thick,black, fill=white] (-6,2) circle (0.1);
			\draw[thick,black, fill=white] (6,-2) circle (0.1);
			\draw[thick, fill=red ] (7.5,-1.35) circle (0.1);
			\draw[thick, fill=red ] (4.5,-1.35) circle (0.1);
		    \draw[thick, fill=red ] (-4.5,-1.35) circle (0.1);
			\draw[thick, fill=red ] (-4.5,1.35) circle (0.1);
			\draw[thick, fill=red] (4.5,1.35) circle (0.1);

			\draw (-3.4,3.5) node[red!60] {$\ell^{*}_{i}$};
            \draw (0.5,2) node[red!60] {$\ell^{*}$};
			\draw (-2,2.3) node[red!60] {$\mathbb{P}_{i}$};
			
			\draw (1.5,0.5) node[dark-green] {$\ell$};
	
			\draw (1.7,-1.7) node {$^{[s_{i}]}\ell^{[s_{j}]}$};
			\draw (-7.5,0) node {$\partial_{i}$};
            \draw (7.5,0) node {$\partial_{j}$};
			\draw (-5,0) node[font=\scriptsize] {$p_i{=}q_{-s_{i}}$};
		    \draw (4.8,0) node[font=\scriptsize] {$q_{s_{j}}{=}p_j$};
			\draw (-2.8,-0.3) node[fill=white, inner sep=1pt, font=\scriptsize] {$q_{-1}$};
			\draw (-1.2,-0.3) node[fill=white, inner sep=1pt, font=\scriptsize] {$q_{0}$};
			\draw (2.4,-0.3) node[fill=white, inner sep=1pt, font=\scriptsize] {$q_{1}$};
            \draw (3.2,-0.3) node[fill=white, inner sep=1pt, font=\scriptsize] {$q_{2}$};
		
			\draw (-2.1,0.5) node[font=\small] {$\aaa_{-1}$};
			\draw (-0.5,0.5) node[font=\small] {$\aaa_{0}$};
			\draw (3,0.6) node[font=\small] {$\aaa_{1}$};
			
		\end{tikzpicture}
	\caption{Intersections of $\ell$ and $^{[s_i]}\ell^{[s_j]}$ when $\partial_i$ and $\partial_j$  are distinct (for $s_i=4$ and $s_j=3$)} 
	\label{fig:silsj}
\end{figure}

One can see that there are no other intersections between $\ell$ and $^{[s_i]}\ell^{[s_j]}$. Moreover,  $\ell$ and $^{[s_i]}\ell^{[s_j]}$ are in minimal position. Indeed, note that all intersections $q_k$, $-s_i\le k\le s_j$, have the same orientation (namely, negative as intersections from $\ell$ to $^{[s_i]}\ell^{[s_j]}$) and hence cannot be removed by isotopy (recall that $\Sigma$ is oriented). It follows that  the corresponding object $\P_{(\ell,0)}$ is not $\tau$-periodic.

Now assume $i=j$, that is, both endpoints of $\ell$ are on \textbf{the same}  boundary component~$\partial_i$. Assume additionally  that $\ell$ is not isotopic to a part of $\partial_i$. In this case we have $s_i=s_j$.
There are $2s_i+1$ intersection points $q_{-s_i}$,  $\cdots$, $q_0$, $\cdots$,  $q_{s_i}$ between $\ell$ and $^{[s_i]}\ell^{[s_i]}$ as in the case $i\ne j$. Additionally, there are ``extra'' intersections that we label $\bar q_{-s_i}, \cdots, \bar q_{-1}, \bar q_1, \cdots, \bar q_{s_i}$,
see Figure \ref{fig:silsi}. We claim that the arcs $\ell$ and $^{[s_i]}\ell^{[s_i]}$ are in minimal position. Indeed, a pair of intersection points can be removed by isotopy only if the points in the pair are adjacent on both $\ell$ and $^{[s_i]}\ell^{[s_i]}$. One can see that there are two such pairs: $q_0,\bar q_{-1}$ and $q_0,\bar q_{1}$. But an isotopy removing either of these two pairs is equivalent to an isotopy contracting $\ell$ to a part of $\partial_i$, which is impossible by our assumption. Therefore,  the object $\P_{(\ell,0)}$ is also not $\tau$-periodic in this case.

\begin{figure}[ht]%second
	\centering 
		\begin{tikzpicture}[>=stealth,scale=0.8]

            \draw [line width=1pt] (-8,3) to (8,3);	% upper boundary
            \fill[gray!40, draw=none] (-8,3) rectangle (8,4); %outside of the surface

            \draw[black, fill=gray!40, line width=1pt] (0,-1.5) circle (1);   % a hole

            \draw[line width=1.5pt,dark-green, ->] (-4,3) to (-4,-2) %arc l
			to[out=-90,in=-180](0,-4);
		    \draw[line width=1.5pt,dark-green, ->] (0,-4) to[out=0,in=-90] (4,-2)
			to (4,3);

            %arc tau^s l
            \draw[line width=1pt,->] (-4,3) to[out=-135,in=0] (-6,2.5) to (-8,2.5);
			\draw[line width=1pt,->] (-8,2) to (0,2);
            \draw[line width=1pt,->] (0,2) to[out=0,in=-135] (4,3);
            \draw[line width=1pt,->] (8,1.5) to (0,1.5);
            \draw[line width=1pt,->] (0,1.5) to (-8,1.5);
            \draw[line width=1pt,->] (-8,1) to (0,1);
            \draw[line width=1pt,->] (0,1) to (8,1);
            \draw[line width=1pt,->] (8,0.5) to (0,0.5);
            \draw[line width=1pt,->] (0,0.5) to[out=-180,in=90]  (-3.8,-1.5)
                 to (-3.8,-2) to[out=-90, in=135]  (0,-4);
            \draw[line width=1pt,->] (0,-4) to[out=-45,in=-90]  (4.2,-2) to (4.2,-1.5) to[out=90,in=-180] (8,0);

			\draw [bend right, line width=1pt, ->] (0.3,-4) to (-0.3,-3.7);   %oriented intersections
			\draw [bend right, line width=1pt, ->] (4,0.7) to (4.3,1);
			\draw [bend right, line width=1pt, ->] (-4,1.8) to (-4.3,1.5);

			\draw[thick,black, fill=white] (-4,3) circle (0.1);  % endpoints
			\draw[thick,black, fill=white] (4,3) circle (0.1);

			%labels
			\draw (2,-3.5) node[dark-green] {$\ell$};
			\draw (-2,-2.8) node {$^{[s_{i}]}\ell^{[s_{i}]}$};
			\draw (-7.5,3.3) node[font=\small] {$\partial_{i}$};
			\draw (-4,3.5) node[font=\scriptsize] {$q_{-s_{i}}$};
		    \draw (4,3.5) node[font=\scriptsize] {$q_{s_{i}}$};
			\draw (-3.5,1.5) node[fill=white, inner sep=1pt, font=\scriptsize] {$q_{-1}$};
            \draw (-3.5,1) node[fill=white, inner sep=1pt, font=\scriptsize] {$\bar q_{-1}$};
            \draw (-3.5,2) node[fill=white, inner sep=1pt, font=\scriptsize] {$\bar q_{-2}$};
			\draw (0,-4.3) node[font=\scriptsize] {$q_{0}$};
			\draw (3.7,1) node[fill=white, inner sep=1pt, font=\scriptsize] {$q_{1}$};
            \draw (3.7,0.5) node[fill=white, inner sep=1pt, font=\scriptsize] {$\bar q_{1}$};
            \draw (3.7,1.5) node[fill=white, inner sep=1pt, font=\scriptsize] {$\bar q_{2}$};
			\draw (-4.7,1.7) node[font=\small] {$\aaa_{-1}$};
			\draw (0.2,-3.5) node[font=\small] {$\aaa_{0}$};
			\draw (4.6,0.7) node[font=\small] {$\aaa_{1}$};
		\end{tikzpicture}
	\caption{Intersections of $\ell$ and $^{[s_i]}\ell^{[s_i]}$ when both endpoints of $\ell$ are on the same boundary component $\partial_i$ (for $s_i=2$)} 
	\label{fig:silsi}
\end{figure}

$\cdots$, $q_{s_j-1}$, $q_{s_j}$, respectively.
Denote by $\aaa_{-s_i+1}$, $\aaa_{-s_i+2}$ $\cdots$, $\aaa_{-1}$, $\aaa_0$, $\aaa_1$, $\cdots$, $\aaa_{s_j-2}$, $\aaa_{s_j-1}$ the oriented intersections from $\ell$ to $^{[s_i]}\ell^{[s_j]}$ associated with intersections $ q_{-s_i+1}$, $q_{-s_i+2}$ $\cdots$, $q_0$, $q_1$, $\cdots$, $q_{s_j-2}$, $q_{s_j-1}$, respectively. Since $\ell$ and 
$^{[s_i]}\ell^{[s_j]}$ are in minimal position, these points contribute to the graded $\Hom$ space between the objects
in $\dba$ corresponding to  $\ell$ and $^{[s_i]}\ell^{[s_j]}$ (with some gradings) by Proposition~\ref{prop:obj-in-derived-cat}.

For a boundary component $\partial_i$ of $\Sigma$, the authors of \cite{LP20} introduce a combinatorial boundary component $B_i$, which is a composition of so-called forbidden and permitted paths between arcs in $\zD$,  giving rise to  a simple closed curve surrounding $\partial_i$. They then define the combinatorial winding number of $B_i$ and prove that it is  equal to the geometric winding number of the grading line field $\eta$ around $\partial_i$, see \cite[Proposition 3.10]{LP20}. Similarly, we can define a dual version of the combinatorial boundary component associated to $\partial_i$, given by a composition of certain paths between arcs in $\zD^*$, as well as an associated combinatorial winding number. Analogously to the proof of \cite[Proposition 3.10]{LP20} this dual combinatorial winding number is again equal to the geometric winding number of $\eta$ around $\partial_i$. 
Explicitly, these combinatorial winding numbers
are computed as follows.

Let $\partial_1,\ldots,\partial_b$ be the connected components of $\partial$. Let $m_i$ be the number of marked points on $\partial_i$, let $l_i$ be the number of arcs in $\Delta^*$ attached to $\partial_i$. Note that $l_i$ equals the number of 
polygons of $\Delta^*$-dissection contacting $\partial_i$, see Figure~\ref{fig:winding}. Note that $m_i$ of these polygons contact $\partial_i$ by a segment between two  $\rpoint$-points and $u_i:=l_i-m_i$ of them contact $\partial_i$ at a single $\rpoint$-point.
For any $j\in\{1\ldots m_i\}$ consider the corresponding segment of $\partial_i$ with a marked point and the adjacent polygon $\mathbb P_j$. 
Let $\omega_{i,j}^m$ be the sum of degrees of all minimal oriented intersections formed by the edges of $\mathbb P_j$.
For any $j\in\{1\ldots u_i\}$ let  $\omega_{i,j}^u$ be the degree of the corresponding minimal oriented intersection of $\Delta^*$-arcs at the contact point. Now define
\begin{equation}
    \label{eq_w}
    \omega_i:=\sum_{j=1}^{m_i} (1-\omega^m_{i,j})-\sum_{j=1}^{u_i}(1-\omega^u_{i,j}), %\quad u^{\gr}_i:=m_i-w_i.
\end{equation}
Note that in the ungraded case one has $\omega_i=m_i-u_i$.% and $u^{\gr}_i=u_i$.

\begin{figure}[ht]
%	\centering 
		\begin{tikzpicture} 
                \draw[fill=gray!40] (0,0) circle (2);  % boundary

				\foreach \u in {1,...,4} % delta*-arcs
    				\draw[red!60]  ({2*cos(90*\u)},{2*sin(90*\u)}) to ({3*cos(90*\u)},{3*sin(90*\u)});
                \foreach \u in {1,...,4} % delta*-arcs
                    \draw[red!60]  ({2*cos(90*\u)},{2*sin(90*\u)}) to ({3*cos(20+90*\u)},{3*sin(20+90*\u)});
                \foreach \u in {1,...,4} % delta*-arcs
                    \draw[red!60]  ({2*cos(90*\u)},{2*sin(90*\u)}) to ({3*cos(-20+90*\u)},{3*sin(-20+90*\u)});

                \foreach \u in {1,...,4} % winding curve
                    \draw[thick, dark-green, ->]  ({2.5*cos(45+90*\u)},{2.5*sin(45+90*\u)}) arc[start angle={45+90*\u}, end angle=135+90*\u, radius=2.5];

                \foreach \u in {1,...,4} % red marked points
  				\draw[fill=red] ({2*cos(90*\u)},{2*sin(90*\u)}) circle (2pt);

                \foreach \u in {1,...,4} % white marked points
  				\draw[fill=white] ({2*cos(45+90*\u)},{2*sin(45+90*\u)}) circle (2pt);

                %labels
                \draw[dark-green] ({2.7*cos(60)}, {2.7*sin(60)}) node {$\gamma$};
                \draw ({1.7*cos(60)}, {1.7*sin(60)}) node {$\partial_i$};
				\draw[font=\scriptsize] ({3.1*cos(7)}, {3.1*sin(7)}) node {$\omega_{i,1}^u{-}1$};
                \draw[font=\scriptsize] ({2.9*cos(80)}, {2.9*sin(80)}) node {$\omega_{i,2}^u{-}1$};
                \draw[font=\scriptsize] ({2.9*cos(100)}, {2.9*sin(100)}) node {$\omega_{i,3}^u{-}1$};
                \draw[font=\scriptsize] ({3.1*cos(173)}, {3.1*sin(173)}) node {$\omega_{i,4}^u{-}1$};
                \draw[font=\scriptsize] ({3.1*cos(187)}, {2.9*sin(187)}) node {$\omega_{i,5}^u{-}1$};
                \draw[font=\scriptsize] ({2.9*cos(260)}, {2.9*sin(260)}) node {$\omega_{i,6}^u{-}1$};
                \draw[font=\scriptsize] ({2.9*cos(280)}, {2.9*sin(280)}) node {$\omega_{i,7}^u{-}1$};
                \draw[font=\scriptsize] ({3.1*cos(-7)}, {3.1*sin(-7)}) node {$\omega_{i,8}^u{-}1$};

                \draw[font=\scriptsize] ({2.9*cos(45)}, {2.9*sin(45)}) node {$1{-}\omega_{i,1}^m$};
                \draw[font=\scriptsize] ({3*cos(135)}, {3*sin(135)}) node {$1{-}\omega_{i,2}^m$};
                \draw[font=\scriptsize] ({2.9*cos(-135)}, {2.9*sin(-135)}) node {$1{-}\omega_{i,3}^m$};
                \draw[font=\scriptsize] ({3*cos(-45)}, {3*sin(-45)}) node {$1{-}\omega_{i,4}^m$};

                \foreach \u [evaluate=\u as \var using int(3*\u)] in {1,...,3} % delta*-arcs
                    \draw[red!60, font=\scriptsize]  ({3.2*cos(90*\u)},{3.2*sin(90*\u)}) node {$\ell^*_{\var}$};
                \foreach \u [evaluate=\u as \var using int(3*\u+1)] in {0,...,3} % delta*-arcs
                    \draw[red!60, font=\scriptsize]  ({3.2*cos(90*\u+21)},{3.2*sin(90*\u+21)}) node {$\ell^*_{\var}$};
                \foreach \u [evaluate=\u as \var using int(3*\u-1)] in {1,...,3} % delta*-arcs
                    \draw[red!60, font=\scriptsize]  ({3.2*cos(90*\u-21)},{3.2*sin(90*\u-21)}) node {$\ell^*_{\var}$};
                \draw[red!60, font=\scriptsize]  ({3.5*cos(90*0)},{3.5*sin(90*0)}) node {$\ell^*_{0}{=}\ell^*_{12}$};
                \draw[red!60, font=\scriptsize]  ({3.2*cos(90*0-21)},{3.2*sin(90*0-21)}) node {$\ell^*_{11}$};
		  \end{tikzpicture}
	\caption{Grading increment under a winding around $\partial_i$. Here $m_i=4$, $l_i=12$, $u_i=8$.} 
	\label{fig:winding}
\end{figure}

\begin{lemma}
    \label{lemma_oneturn}
With the notations above, let $\ell^*$ be an arc in $\Delta^*$ with an endpoint on $\partial_i$. Let~$\gamma$ be a curve intersecting $\ell^*$ at points $x_0,x_1$ and winding around $\partial_i$ in the negative direction between $x_0$ and $x_1$. Let $\ell^*=\ell^*_0,\ell^*_1,\ldots,\ell^*_{l_i}=\ell^*$ be the $\Delta^*$-arcs that $\gamma$ intersects consecutively between $x_0$ and $x_1$,  see Figure~\ref{fig:winding}. If $f$ is a grading on $\gamma$ then 
\begin{equation}
    \label{eq_oneturn}
    f(\ell^*_{l_i})=f(\ell^*_{0})+\omega_i.
\end{equation}
\end{lemma}
\begin{proof}
This follows from the definition of a grading and the definition of $\omega_i$. More precisely, for any segment of $\gamma$ between between $\ell^*_{k-1}$ and $\ell^*_k$, if it belongs to a polygon that contacts $\partial_i$ at a single $\rpoint$-point, then by~\eqref{equ:grading} one has $f(\ell^*_k)-f(\ell^*_{k-1})=-1+\omega_{i,j}^u$ for some $j\in\{1,\ldots, u_i\}$, where $\omega_{i,j}^u$ is the corresponding degree.

Similarly, we have $f(\ell^*_k)-f(\ell^*_{k-1})=1-\omega_{i,j}^m$ for some $j\in\{1,\ldots, m_i\}$ and the corresponding degree $\omega_{i,j}^m$, if the segment of $\gamma$ between $\ell^*_{k-1}$ and $\ell^*_k$ belongs to a polygon that contacts $\partial_i$ along a segment between two  $\rpoint$-points. Summing up over~$k$ one gets $f(\ell^*_{l_i})-f(\ell^*_0)=\omega_i$ by~\eqref{eq_w}. 
\end{proof}

\begin{lemma}\label{lem:key3}
With the notations above, assume that $\ell$ and $^{[s_i]}\ell^{[s_j]}$ are equipped with some gradings $f$ and $g$. Then the degrees of oriented intersections $\aaa_k$ from $(\ell,f)$ to  $(^{[s_i]}\ell^{[s_j]},g)$ satisfy
$$
\begin{array}{ll}
					|\aaa_{-k}|=|\aaa_0|+k\omega_i, & 0\le k\le s_i-1; \\
					|\aaa_k|=	|\aaa_0|+k\omega_j, & 0\le k\le s_j-1,
                \end{array}
$$
where $\omega_i$ and $\omega_j$ are the winding numbers of the boundary components $\partial_i$ and $\partial_j$, respectively.           
\end{lemma}
\begin{proof}
It follows from Definition~\ref{def_degree-of-orint} of the degree of an oriented intersection. Let $\mathbb P_i$ be the polygon of the dissection $\Delta^*$ where  $\aaa_{-k}$ belong for $0\le k\le s_i-1$, let 
$\ell^*_i$ be the first arc in $\Delta^*$ attached to $\partial_i$ after $p_i$ in the negative direction on $\partial_i$, see Figure~\ref{fig:silsj}. Then $^{[s_i]}\ell^{[s_j]}$ exits $\mathbb P_i$ via $\ell^*_i$ after $\aaa_{-k}$ for $0\le k\le s_i-1$. We are at case 3 from Figure~\ref{Figure2} and take $\ell^*$ as  $\ell^*_1$ and  $\ell^*_i$ as $\ell^*_2$.

Let $_0\ell_i^*,_1\ell_i^*, _2\ell_i^*, \ldots$ be the occurrences of $\ell^*_i$ in the ordered set of crossings $\zD^*\cap ^{[s_i]}\ell^{[s_j]}$, ordered from $q_0$ towards $q_{-s_i}$. Then we have $g(_k\ell_i^*)=g(_0\ell_i^*)+k\omega_i$ by Lemma~\ref{lemma_oneturn}. Then by 
Definition~\ref{def_degree-of-orint} we get
$$|\aaa_{-k}|-|\aaa_0|=(g(_k\ell_i^*)-f(\ell^*)+|\sigma|)-(g(_0\ell_i^*)-f(\ell^*)+|\sigma|)=
g(_k\ell_i^*)-g(_0\ell_i^*)=k\omega_i.$$

For the degree of $|\aaa_k|$, the arguments are the same.
\end{proof}

We now consider the $\tau$-orbits of the indecomposable projective  $A$-modules.
Denote by~$m$ the least common multiple of the $m_1,\ldots,m_b$ and set $s_i=\frac{m}{m_i}$. 
Let $\ell$ be an $\gpoint$-arc in $\zD$. Assume that the endpoints of $\ell$ are $p_i$ and $p_j$ which belong to $\partial_i$ and $\partial_j$, respectively. The object  $\P_{(\ell,0)}\in\dba$ associated to the graded arc $(\ell,0)$ (where the grading is given by the only value at $\ell^*$ being $0$) is the indecomposable projective  $A$-module generated in degree $0$.
By Proposition \ref{prop:tau}, for any positive integer  $N$ the object $\tau^{mN}\P_{(\ell,0)}$ arises from a graded arc $(^{[s_iN]}\ell^{[s_jN]},g_N)$ for the grading $g_N=\tau^{mN}0$.

We have the following key lemma.

\begin{lemma}\label{lem:2.3}
Using the notations above, there exist integers $v,v'$ independent of  $N$, such that for all $N>>0$ 
\begin{align*}\text{the left length of}\quad  \tau^{mN}\P_{(\ell,0)} &=  \min\{s_i\omega_i,s_j\omega_j,0\}\cdot N+v,\\
\text{the right length of}\quad  \tau^{mN}\P_{(\ell,0)} &=  \max\{s_i\omega_i,s_j\omega_j,0\}\cdot N+v'.
\end{align*}
\end{lemma}

\begin{proof}
Recall that the object $\tau^{mN}\P_{(\ell,0)}$ corresponds to the graded arc $(^{[s_iN]}\ell^{[s_jN]},g_N)$ for the grading $g_N=\tau^{mN}0$. By definition, the left length of $\tau^{mN}\P_{(\ell,0)}$ is the minimal value of the grading function $g_N$. Let $\ell^*_1,\ldots,\ell^*_{l_i}$ be the arcs in $\Delta^*$ attached to $\partial_i$. 
For any of them, let $_1\ell^*_k,\ldots,_{s_iN}\ell^*_k$ be the occurrences of $\ell^*_k$ in the ordered set of crossings $\zD^*\cap \tau^{mN}\ell$, counted starting from $q_0$. By Lemma~\ref{lemma_oneturn}, one has 
$g_N(_{n+1}\ell^*_k)=g_N(_{n}\ell^*_k)+\omega_i$, and hence 
$g_N(_{n}\ell^*_k)=g_N(_{1}\ell^*_k)+\omega_i(n-1)$ for all $n$ and $k$. 
If $\omega_i\ge 0$ we have that
the minimum of $\{g_N(_n\ell^*_k)\}_{k=1\ldots l_i, n=1\ldots s_iN}$ is attained at the first lap where $n=1$ and is
$$\min(g_N(_1\ell^*_1),\ldots,g_N(_1\ell^*_{l_i}))=\mathrm{const}.$$
Now if $\omega_i< 0$ then the minimum of $\{g_N(_n\ell^*_k)\}_{k=1\ldots l_i, n=1\ldots s_iN}$ is attained at the last lap where $n=s_iN$ and is
$$\min(g_N(_1\ell^*_1),\ldots,g_N(_1\ell^*_{l_i}))+(s_iN-1)\omega_i=s_i\omega_iN+\mathrm{const}.$$

Note that $g_{N_1}(_{n}\ell^*_k)=g_{N_2}(_{n}\ell^*_k)$ for positive integers $N_1$ and $N_2$, as soon as both intersections exist. Therefore, the above constants are independent of the choice of $N$.

Similarly, we compute the minimum of $g_N$ on the intersections with the arcs from $\Delta^*$ attached to $\partial_j$.
The statement now follows by considering several cases depending on how $s_i\omega_i, s_j\omega_j$, and $0$ compare.

For the right length the arguments are similar.
\end{proof}

\begin{corollary}\label{cor:key1}
In the above notation, there exist  constants $v,v'$ independent on $N$ such that 
$$\Hom(A,\tau^{mN}A[n])=0$$
for each $N>0$ and  
$$n< \min(s_1\omega_1,\ldots,s_b\omega_b,0)\cdot N+v\quad \text{or}\quad n > \max(s_1\omega_1,\ldots,s_b\omega_b,0)\cdot N+v'.$$
\end{corollary}
\begin{proof}
First we check that for any arc $\ell\in\Delta$ whose endpoints are on $\partial_i$ and $\partial_j$ there exists $v_{\ell},v'_{\ell}$, independent on $N$, such that  
$$\Hom(A,\tau^{mN}\P_{(\ell,0)}[n])=0$$ for each $N>0$ and 
$$n<\min\{s_i\omega_i,s_j\omega_j,0\}\cdot N+v_{\ell} \quad \text{or}\quad n>\max\{s_i\omega_i,s_j\omega_j,0\}\cdot N+v'_{\ell}.$$
Indeed, note that $\Hom_{\dba}(A,\tau^{mN}\P_{(\ell,0)}[n])\cong H^n(\tau^{mN}\P_{(\ell,0)})$ and that 
$\tau^{mN}\P_{(\ell,0)}$, as a graded module, is the direct sum~\eqref{eq_P-direct-sum} of indecomposable projective $A$-modules placed in degrees given by the values of $g_N=\tau^{mN}0$. Since any indecomposable projective $A$-module is supported at finitely many degrees, the claim follows from Lemma~\ref{lem:2.3} (recall that the left/right length of $\tau^{mN}\P_{(\ell,0)}$ is the minimal/maximal value of $g_N$).

Now the statement of the corollary follows since $A\cong \oplus_{\ell\in\Delta}\P_{(\ell,0)}$. 
\end{proof}

\begin{remark}
    Note that the graded module $\tau^{mN}\P_{(\ell,0)}$ constructed by winding $\ell$ around boundary components $\partial_i$ and $\partial_j$ close to them may be not in minimal position with the arcs from $\Delta^*$: there may be unnecessary intersections with the arcs that are isotopic to a segment of $\partial_i$ or $\partial_j$. Therefore results from Lemma~\ref{lem:2.3} actually only provide a lower/upper bound on the left/right length of the ``minimal representative'' of $\tau^{mN}\P_{(\ell,0)}$. Nevertheless, the following lemma shows that these  bounds are asymptotically exact if the arc $\ell$ is not isotopic to a boundary segment.
\end{remark}

\begin{lemma}\label{lem:2.8}
In the above notation, suppose that the arc $\ell$ is not isotopic to a boundary segment. 
If $n=1$ or $n=s_i\omega_iN+(1-\omega_i)$  or 
$n=s_j\omega_jN+(1-\omega_j)$ then $$\Hom(A, \tau^{mN}\P_{(\ell,0)}[n])\ne 0.$$ 
\end{lemma}
\begin{proof}
Recall that there are oriented intersections  $\aaa_{k}$  from $\ell$ to $\tau^{mN}\ell$, where $-s_iN+1\le k\le s_jN-1$, and $|\aaa_{k}|$ is given by Lemma~\ref{lem:key3}. Fix the gradings $f=0$ on $\ell$ and $g_N=\tau^{mN}0$ on $\tau^{mN}\ell$. We claim that $|\aaa_0|=1$ for all $N$. Indeed, for any $r>0$ the degree of the intersection from $(\ell,0)$ to $(\tau^r\ell, \tau^r0)$ at $q$ is given by a formula from Definition~\ref{def_degree-of-orint}, and all ingredients of this formula do not depend on~$r$. For $r=1$ the degree is $1$ by definition. Therefore,  $|\aaa_{-s_iN+1}|=1+\omega_i(s_iN-1)$,  $|\aaa_{s_jN-1}|=1+\omega_j(s_jN-1)$ by Lemma~\ref{lem:key3}. Since~$\ell$ is not isotopic to a boundary segment, arcs $\ell$ and $\tau^{mN}\ell$ are in minimal position. We have $\Hom(\P_{(\ell,0)}, \tau^{mN}\P_{(\ell,0)}[n])\ne 0$ by Proposition~\ref{prop:obj-in-derived-cat} for $n=|\aaa_0|,|\aaa_{-s_iN+1}|,|\aaa_{s_jN-1}|$. Now the statement follows since $\P_{(\ell,0)}$ is a direct summand in $A$.
\end{proof}

Furthermore, we have the following
\begin{lemma}\label{lem:2.9}
There is a constant $u$, independent on $N$, such that for all $N>0$ one has 
$$\sum_{n\in\Z}\dim_k\Hom(A,\tau^{mN}A[n])\le u\cdot N.$$ 
\end{lemma}
\begin{proof}
Denote $\Hom^\bullet(-,-):=\oplus_{n\in \Z}\Hom(-,-[n])$.
Note that we have
\begin{equation*}
\begin{array}{lll}
\Hom^\bullet(A,\tau^{mN}A)&\cong&
\Hom^\bullet(\bigoplus\limits_{\ell'\in \zD}\P_{(\ell',0)},\bigoplus\limits_{\ell\in \zD}\tau^{mN}\P_{(\ell,0)})\\
&\cong&
\bigoplus\limits_{\ell',\ell\in \zD}\Hom^\bullet(\P_{(\ell',0)},\P_{(\tau^{mN}\ell,\tau^{mN}0)}).
\end{array}
\end{equation*}
By Proposition~\ref{prop:obj-in-derived-cat}, the dimension of the graded vector space $\Hom^\bullet(\P_{(\ell',0)},\P_{(\tau^{mN}\ell,\tau^{mN}0)})$ equals to the number of oriented intersections from $\ell'$ to $\tau^{mN}\ell$. More precisely, the dimension equals to the number of oriented intersections between arcs in minimal position isotopic to $\ell'$ and $\tau^{mN}\ell$, which number is bounded from above by the number of oriented intersections from $\ell'$ to $\tau^{mN}\ell$.

Recall  that the associated arc of $\tau^{mN}\ell$ is $^{[s_iN]}\ell^{[s_jN]}$. 
Then the number of intersections between $\ell'$ and $^{[s_iN]}\ell^{[s_jN]}$ is bounded from above by $2+2s_iN+2s_jN$ (where $2$ is the maximal number of intersections between $\ell'$ and $\ell$, and the summands $2s_iN,2s_jN$ are upper bounds on the numbers of intersections that occur near $\partial_i$ and $\partial_j$ respectively). Furthermore, since there are only finitely many  pairs  $(\ell',\ell)$, there exists a positive integer~$u$ such that $\dim_k \Hom^\bullet(A,\tau^{mN}A)\leq uN$ for each $N>0$.
\end{proof}

\subsection{Proofs of the main results}

In this section, we prove our main results. We begin by proving Theorem~\ref{thm:main1gentle}, which is more general than Theorem~\ref{thm:main1}, in that we also allow graded gentle algebras which are not homologically smooth. Theorem~\ref{thm:main1} then is a direct corollary of Theorem~\ref{thm:main1gentle}. Indeed, in the homologically smooth case, we have $H_t =h_t$ by Theorem~\ref{thm:dhkk1}. Noting that  $\mS = \tau[1]$, we have  by Lemma~\ref{lem:pre} (2) that $H_t(\mathbb{S})=H_t(\tau)+t$, so the claim of Theorem~\ref{thm:main1} follows.

\begin{theorem}\label{thm:main1gentle}
    Let $A$ be a proper graded gentle algebra such that its geometric model 
    is not a disc with $\leq 1$ stops in the interior.
    %not derived equivalent to a radical square zero graded Nakayama algebra. 
    Then  the function $h_t(\tau)$, for $\tau$ the Auslander--Reiten translate in $\dba$ is  given by
$$h_t(\tau) = \left\{\begin{array}{ll}
						- (\min  \Omega) t & t\geq 0; \\ & \\
						-(\max \Omega) t & t\leq 0.
                        \end{array}\right.$$
\end{theorem}

\begin{proof}
By Lemma \ref{lem:key2}, there exists, for any boundary component $\partial_i$, an arc in $\Delta$ with one of the ends on $\partial_i$ and which is not isotopic to a part of $\partial_i$. 

By Theorem \ref{thm:dhkk1} replacing $N$ by $mN$, we have 
\begin{equation}\label{eq:entropyA}
h_t(\tau)=\lim\limits_{N\to\infty}\frac{1}{mN}\log\sum_{n\in\mathbb{Z}}\ \dim_k\Hom(A,\tau^{mN}(A)[n])\cdot e^{-nt}.\end{equation}

Assume first that $t\ge 0$.

\textbf{Case 1:} $\min(\omega_i)<0$. Without loss of generality let us assume that $\min\Omega= \frac{\omega_1}{m_1}$ and 
$$\min\{s_1\omega_1,\ldots,s_b\omega_b,0\}=\min\left\{\frac{m_1}m\omega_1,\ldots,\frac{m_b}m\omega_b\right\}=\frac{m_1}m\omega_1=s_1\omega_1.$$
In this case we have (where $v$ and $u$ are some constants, the first equality is by Corollary~\ref{cor:key1} and the last inequality is by Lemma~\ref{lem:2.9})
\begin{multline}
\label{eq_upperbound}
\sum_{n\in\mathbb{Z}} \dim_k\Hom(A,\tau^{mN}(A)[n])\cdot e^{-nt} =
\sum_{n\ge s_1\omega_1N+v} \dim_k\Hom(A,\tau^{mN}(A)[n])\cdot e^{-nt} \\
\le e^{-t(s_1\omega_1N+v)} \cdot \sum_{n\ge s_1\omega_1N+v} \dim_k\Hom(A,\tau^{mN}(A)[n]) \le e^{-t(s_1\omega_1N+v)} \cdot uN.
\end{multline}
Now we obtain a lower bound. Choose an arc $\ell\in\Delta$ with an end on $\partial_1$ and not isotopic to a boundary segment. By Lemma~\ref{lem:2.8} one has $\Hom(A,\tau^{mN}\P_{(\ell,0)}[n])\ne 0$ where $n=s_1\omega_1N+(1-\omega_1)$. 
It follows that (since $\P_{(\ell,0)}$ is a direct summand in $A$)
\begin{equation}
\label{eq_lowerbound}
    \sum_{n\in\mathbb{Z}} \dim_k\Hom(A,\tau^{mN}(A)[n])\cdot e^{-nt} \ge e^{-t(s_1\omega_1N+(1-\omega_1))}.
\end{equation}
Plugging~\eqref{eq_upperbound} and~\eqref{eq_lowerbound} into~\eqref{eq:entropyA} and computing the limits one gets bounds
$$-t\frac{s_1\omega_1}m\le h_t(\tau)\le -t\frac{s_1\omega_1}m.$$
Recall that $-t \frac{s_1\omega_1}m=-t\frac{\omega_1}{m_1} =-t\min\Omega$ to conclude.

\textbf{Case 2:} $\omega_i\ge 0$ for all $i$. It means that $\min\Omega=0$ and we have to prove $h_t(\tau)=0$, the arguments are similar to the above.

For the upper bound we have (where $v$ and $u$ are some constants, the first equality is by Corollary~\ref{cor:key1} and the last inequality is by Lemma~\ref{lem:2.9})
\begin{multline}
\label{eq_upperbound0}
\sum_{n\in\mathbb{Z}} \dim_k\Hom(A,\tau^{mN}(A)[n])\cdot e^{-nt} =
\sum_{n\ge v} \dim_k\Hom(A,\tau^{mN}(A)[n])\cdot e^{-nt} \le \\
\le e^{-tv} \cdot \sum_{n\ge v} \dim_k\Hom(A,\tau^{mN}(A)[n]) \le e^{-tv} \cdot uN.
\end{multline}
For the lower bound, choose any arc $\ell\in\Delta$ not isotopic to a boundary segment. By Lemma~\ref{lem:2.8} one has $\Hom(A,\tau^{mN}\P_{(\ell,0)}[1])\ne 0$.
It follows that 
\begin{equation}
\label{eq_lowerbound0}
    \sum_{n\in\mathbb{Z}} \dim_k\Hom(A,\tau^{mN}(A)[n])\cdot e^{-nt} \ge e^{-t}.
\end{equation}
As before, plugging~\eqref{eq_upperbound} and~\eqref{eq_lowerbound} into~\eqref{eq:entropyA} and computing the limits one gets bounds
$$0\le h_t(\tau)\le 0.$$
Hence $h_t(\tau)=0$ and we are done.

For $t\le 0$ the arguments are similar.
\end{proof}

Next, Theorem \ref{thm:main2} is a direct consequence of Theorem \ref{thm:main1gentle} and \cite[Proposition 6.13]{EL21} or \cite[Theorem 1.3]{FF23}. For completeness, we repeat the statement and include a short proof. 

\begin{theorem}[Theorem \ref{thm:main2}]\label{thm:Main2Section2} 
Let $\calt=\mathcal{W}(\Sigma, M, \eta)$ be the partially wrapped Fukaya category of $(\Sigma,M,\eta)$.  Suppose further that  $\Sigma$ is not a disc with $\le 1$ stops in the interior.
Then    
$$\uSdim \calt = (1-\min \Omega) \;\; \mbox{ and }\;\;  \lSdim \calt=(1-\max \Omega). $$
\end{theorem}

\begin{proof}
In \cite[Proposition 6.13]{EL21} it is shown that  for the perfect derived category of a proper dg algebra $A$, 
the  upper Serre dimension $\uSdim$ and the  lower Serre dimension $\lSdim$ of $\rm{per}(A)$ are given by
\begin{equation*}
\uSdim=\lim_{t\to+\infty}{\frac{h_t(\mathbb{S})}{t}} \;\;\;\; \mbox{ and } \;\;\;\; \lSdim=\lim_{t\to-\infty}{\frac{h_t(\mathbb{S})}{t}}
\end{equation*}
(the statement in~\cite{EL21} is made only for smooth dg algebras and for $H_t$, but it is $h_t$ that they work with and the smoothness condition is only needed to replace $h_t$ with~$H_t$.)
Therefore, the claim follows from Theorem~\ref{thm:main1gentle} directly.
\end{proof}

We now give a proof of Theorem \ref{thm:main3} which we  restate here.

\begin{theorem}[Theorem~{\ref{thm:main3}}]\label{thm:Main3Section2}
Suppose $\calt$ is a homologically smooth partially wrapped Fukaya category of a surface $\Sigma$ with stops which is not a disc (or $\calt =\dba$ for a homologically smooth  graded gentle algebra $A$ which is not derived equivalent to a hereditary algebra of type $\mathbb{A}$). Let  $H_t(\mathbb{S})$ be the entropy of the Serre functor $\mathbb{S}$  of $\calt$. Then the following statements are equivalent:

\begin{enumerate}
    \item  $H_t(\mathbb{S})$ is linear;
    \item[(1')] $\lSdim \calt=\uSdim\calt$;
    \item $H_t(\mathbb{S})=t$;
    \item[(2')]  $\lSdim \calt=\uSdim\calt=1$;
    \item $\omega_i=0$ for all boundary components of $\Sigma$;
    \item $m_i=n_i$ for all pairs $(m_i,n_i)$ in the AG-invariant;
    \item $\Sigma$ is an annulus and the winding number along the equator is zero; 
    \item $A$ is derived equivalent to a hereditary algebra of affine type $\Tilde{\mathbb A}$ with trivial grading;
    \item $A$ is derived equivalent to a $d$-representation infinite algebra  in the sense of \cite{HIO14}. 
\end{enumerate}
\end{theorem}

\begin{proof}
Equivalences (1) $\Longleftrightarrow$ (1') and (2) $\Longleftrightarrow$ (2') obviously follow from Theorems~\ref{thm:main1} and~\ref{thm:main2}.  Then, by Theorem \ref{thm:main1}, $H_t(\mathbb{S})$ is linear if and only if  $\min  \Omega=\max  \Omega$,  which is the case precisely if   $\Omega=\{0\}$ and if $\min  \Omega=\max  \Omega=0$. So (1) $\Longleftrightarrow$ (2) follows. 
In turn, this is equivalent to (3):
$\omega_i=0$ for each boundary component $\partial_i$. 

Recall that the AG invariant is the set of pairs of numbers $(m_i, m_i - \omega_i$), for $1 \leq i \leq b$, so 
(3) $\Longleftrightarrow$ (4) holds.

Assume that for a surface, the winding number of each boundary component is zero.  
By the Poincar\'e--Hopf index theorem, 
\[\sum\limits_{1\leq i \leq b}(\omega_i+2)=4-4g,\]
where $g$ is the genus of the surface, and the summation extends over all the boundary components,  we have $4g=4-2b$. Since $g\geq 0$ and $b\geq 1$, we deduce that   $g=0$ and $b=2$: $\Sigma$ is a sphere with two holes, that is, an annulus. Such surfaces, with the line field parallel to the boundary, are exactly the geometrical models of hereditary algebras of affine type $\Tilde{\mathbb{A}}$ with trivial grading.
This proves that (3) is equivalent to~(5) and to~(6).

It is shown in \cite[Theorem 7]{H24} for a $d$-representation infinite algebra $A$ with Serre functor $\SSS$ and Coxeter transformation $\Psi$ that the entropy is given by $H_t(\SSS)=dt + \log \rho (\Psi)$ where $\rho(\Psi)$ is the spectral radius of $\Psi$. Hence (7) implies (1). 
Conversely, an algebra of affine type $\Tilde{\mathbb{A}}$ is $1$-representation infinite, therefore (6) implies  (7).
\end{proof}

\section{Entropy of the Serre functor versus entropy of the Coxeter transformation}

In this section, for a gentle algebra with trivial grading,  we relate the categorical entropy of the Serre functor to the natural logarithm of the spectral radius of the morphism on the  Grothendieck group of $\dba$ induced by the Serre functor. In analogy to the definitions in \cite{G87,G03,Y87} (see also in \cite{Og14}), we could refer to the latter as the topological entropy of the Serre functor.   This also relates to a conjecture by Kikuta and Takahashi in the case of autoequivalences in the bounded derived category of coherent sheaves of a smooth projective variety \cite[Conjecture 5.3]{KT19}.

Throughout this section, assume that $A = kQ/I$ is a finite dimensional gentle algebra. Then, since $A$ is Gorenstein \cite{GeissReiten},  $\dba$ has  an Auslander--Reiten translation $\tau$ and it induces a morphism $\Psi = [\tau]$ on the Grothendieck group $\calk_0(\dba)$ which is called the \emph{Coxeter transformation of $A$}.
The characteristic polynomial $\psi(t) \in  \mathbb{Z}[t]$ of $\Psi$ is referred to as the \emph{Coxeter polynomial} \cite{ARS95} and it is a derived invariant of $A$, see, for example, \cite{GM24} and the references within. 

We call the natural logarithm of the spectral radius of $\Psi$, $\log \rho (\Psi)$, the \emph{entropy of the Coxeter transformation}. In particular, we have 
$\log \rho (\Psi) = \log \rho ([\SSS])$, where $[\SSS]$ is the endomorphism of $\calk_0(\dba)$ induced by the Serre functor $\SSS$.

\begin{theorem}(Theorem \ref{thm:main4})\label{thm:categorical vs topological entropy}
Let $A$ be a finite dimensional gentle algebra  with Serre functor $\SSS$ in $\dba$ and let $[\SSS]: \calk_0(\dba) \to \calk_0(\dba)$ be the induced map on the  Grothendieck group of $\dba$. Then 
$$h_{0}(\SSS) = \log \rho([\SSS]).$$
Furthermore, if the global dimension of $A$ is finite then $H_{\rm{cat}}(\SSS) = \log \rho([\SSS])$
where $H_{\rm{cat}}(\SSS):= H_0(\SSS)$.
\end{theorem}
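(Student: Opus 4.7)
The plan is to show that both sides of the claimed equality vanish. For the categorical entropy, we invoke Theorem~\ref{thm:main1gentle} applied to $A$ equipped with the trivial grading (together with the Dynkin type $\mathbb{A}$ formula recorded in the introduction for the remaining case): in either situation, both branches of the piecewise-linear expression for $h_t(\tau)$ vanish at $t=0$, so $h_0(\tau)=0$. Since $\SSS=\tau[1]$, Lemma~\ref{lem:pre}(2) at $t=0$ gives $h_0(\SSS)=h_0(\tau)=0$, and hence $h_{\rm cat}(\SSS)=0$.

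It then remains to prove $\log\rho([\SSS])=0$, which I would obtain as a sandwich. The lower bound $\log\rho([\SSS])\geq 0$ is immediate: since $\SSS$ is an autoequivalence, $[\SSS]$ is an automorphism of the free abelian group $\calk_0(\dba)$, so $\det[\SSS]=\pm 1$. As the complex eigenvalues of $[\SSS]$ therefore have product of absolute value $1$, at least one has absolute value $\geq 1$, and $\rho([\SSS])\geq 1$.

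For the reverse bound $\log\rho([\SSS])\leq h_0(\SSS)=0$ I would use the general Yomdin-type inequality $\log\rho([F])\leq h_0(F)$ for an exact endofunctor $F$ admitting a dg lift. The key estimate is that for any $X,Y\in\dba$,
$$
|\chi(\Hom^{\bullet}(X,F^{N}Y))|=|\langle[X],[F]^{N}[Y]\rangle|\leq\sum_{n\in\mathbb{Z}}\dim_{k}\Hom(X,F^{N}Y[n]),
$$
where $\langle-,-\rangle$ denotes the Euler pairing and the inequality is the trivial bound $|\sum_n(-1)^na_n|\leq\sum_na_n$ for $a_n\geq 0$. Choosing a complex eigenvalue $\lambda$ of $[F]$ with $|\lambda|=\rho([F])$, picking $v$ in its generalized eigenspace, and using non-degeneracy of the Euler pairing on $\calk_0(\dba)\otimes\mathbb{Q}$ to find $w$ with $\langle w,v\rangle\neq 0$, one checks that $|\langle w,[F]^{N}v\rangle|$ grows at rate at least $\rho([F])^{N}$ up to a polynomial factor. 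Writing $v$ and $w$ as $\mathbb{Q}$-linear combinations of classes of objects and enlarging the generator $G$ (which leaves $h_0$ unchanged) to contain all those objects, the displayed inequality yields $h_0(F)\geq\log\rho([F])$. Specialising to $F=\SSS$ closes the sandwich.

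The main obstacle is the non-degeneracy of the Euler pairing needed to produce the dual class $w$. Since the finite dimensional gentle algebra $A$ is Gorenstein, its Cartan matrix is invertible over $\mathbb{Q}$, so the Euler pairing on $\calk_0(\dba)\otimes\mathbb{Q}$ is non-degenerate and the above argument goes through; together with the vanishing $h_{\rm cat}(\SSS)=0$ from Theorem~\ref{thm:main1gentle}, this completes the proof.
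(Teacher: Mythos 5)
Your argument for $h_{\rm cat}(\SSS)=0$ coincides with the paper's: Theorem~\ref{thm:main1gentle} (together with the disc/Dynkin $\mathbb{A}$ case noted in the introduction) gives $h_0(\tau)=0$, and $\SSS=\tau[1]$ then yields $h_0(\SSS)=0$ by Lemma~\ref{lem:pre}(2). The lower bound $\log\rho([\SSS])\geq 0$ via $\det[\SSS]=\pm1$ is also correct. Where you diverge from the paper is in the reverse inequality: the paper simply cites \cite[Theorem 6.17]{GM24}, which asserts that the Coxeter polynomial of a finite dimensional gentle algebra is a product of cyclotomic polynomials, so its roots lie on the unit circle and $\rho([\SSS])=1$ immediately, whereas you reprove a Yomdin-type bound $\log\rho([\SSS])\le h_0(\SSS)$ from scratch.

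That step has a genuine gap. The claim ``since $A$ is Gorenstein, its Cartan matrix is invertible over $\mathbb{Q}$'' is false: Gorenstein does not force an invertible Cartan matrix. A concrete gentle counterexample is $A=kQ/I$ with $Q$ the two-cycle $1\xrightarrow{a}2\xrightarrow{b}1$ and $I=(ab,ba)$. This is a four-dimensional self-injective (hence Gorenstein) gentle algebra whose Cartan matrix is $\left(\begin{smallmatrix}1&1\\1&1\end{smallmatrix}\right)$, of determinant zero, so the Euler form on $\calk_0(\dba)$ is degenerate. In that situation your construction of $w$ with $\langle w,v\rangle\neq 0$ for a dominant eigenvector $v$ can fail, since $v$ might lie in the radical of the Euler form; the general Yomdin-type inequality one actually has is $h_0(F)\geq\log\rho(N(F))$ for the induced map $N(F)$ on the \emph{numerical} Grothendieck group $\calk_0/\mathrm{rad}$, and this does not a priori control $\rho([F])$ when the radical carries large eigenvalues. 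To close the gap without the GM24 input you would have to show that the top eigenvalue of $[\SSS]$ survives to the numerical quotient (or prove $\rho([\SSS])=1$ by other means). As a minor separate point, when $\lambda$ is nonreal you cannot write $v$ and $w$ as $\mathbb{Q}$-linear combinations of classes of objects; one should work over $\mathbb{C}$ and absorb the coefficient moduli into a constant, but this is a phrasing issue and is easily repaired.
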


\begin{proof}
It follows from Theorem \ref{thm:main1} that $h_0(\SSS)=0$. 
 By \cite[Theorem 6.17]{GM24}, the 
characteristic polynomial of the map $[\SSS]: \calk_0(\dba) \to \calk_0(\dba)$ is a 
cyclotomic polynomial whose roots are all roots of unity.
Therefore the natural logarithm of the spectral radius $\rho([\SSS])$ of $[\SSS]$ is equal to zero. Thus we have $\log \rho([\SSS])=0$, which completes the proof.
\end{proof}

We end this section with a reformulation in terms of surface data of the Coxeter polynomial given in terms of the AG-invariant in  \cite[Theorem 6.17]{GM24}.

\begin{proposition}\label{prop:coxeterpoly} Let $A$ be a finite dimensional gentle algebra of finite global dimension with surface model $(\Sigma,M, \zD^*)$ of genus $g$ with $b$ boundary components $\partial_1, \ldots, \partial_b$ where~$\partial_i$ has $m_i$ stops and winding number $\omega_i$. Then the Coxeter polynomial of $A$ is given by 
\begin{equation}\label{eq:coxepoly}
\psi(t) = (t-1)^{{-2+2g+b}} \prod_{i \in \{1, \ldots, b\} } (t^{m_i} - (-1)^{\omega_i}).
\end{equation}
\end{proposition}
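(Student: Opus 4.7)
The formula \eqref{eq:coxepoly} is obtained by translating the AG-invariant formulation of the Coxeter polynomial in \cite[Theorem 6.17]{GM24} into the surface-model language via the dictionary of \cite{LP20}.

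First, I would recall the identification, established in \cite{LP20} and already used in the proof of Theorem~\ref{thm:Main2Section2}, between the AG-invariant $\{(m_i,n_i)\}$ of $A$ and the set of pairs $\{(m_i,\,m_i-\omega_i)\}_{i=1}^{b}$ indexed by the boundary components of the associated surface model. Since $n_i=m_i-\omega_i$, we have $(-1)^{n_i-m_i}=(-1)^{\omega_i}$, so each cyclotomic-type factor appearing in the AG-invariant formulation of \cite[Theorem 6.17]{GM24} becomes exactly $t^{m_i}-(-1)^{\omega_i}$, yielding the product appearing in \eqref{eq:coxepoly}.

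Next, I would identify the exponent of $(t-1)$ in \cite[Theorem 6.17]{GM24} with $2-2g-b$. Because $A$ has finite global dimension, the surface model has no fully-stopped boundary components, and hence $\chi(\Sigma)=2-2g-b$. An Euler-characteristic count applied to the CW-decomposition of $\Sigma$ cut out by $\zD^*$ (vertices $=$ $\rpoint$-points, edges $=$ arcs of $\zD^*$ together with the boundary segments between consecutive $\rpoint$-points, faces $=$ polygons each containing exactly one $\gpoint$-point) gives $\dim K_0(\dba)=\sum_i m_i-\chi(\Sigma)$. Comparing with the $(t-1)$-exponent appearing in \cite[Theorem 6.17]{GM24}, which is a linear function of the rank of $K_0(\dba)$ and the degrees of the cyclotomic factors, pins down that exponent to be exactly $2-2g-b$.

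The main technical obstacle will be reconciling the sign and grading conventions between the combinatorial description of \cite{GM24} and the geometric description of \cite{LP20}, and correctly interpreting the $(t-1)$-factor (as a factor in the polynomial, or as a division against the $(t-1)$-divisibility of the factors $t^{m_i}-(-1)^{\omega_i}$ when $\omega_i$ is even). A useful sanity check to perform along the way is the case of the disc (Dynkin type $\mathbb{A}$, where $g=0$, $b=1$, $\omega_1=2$) and the case of the annulus with trivial winding numbers (affine type $\mathbb{A}$, where $g=0$, $b=2$, $\omega_1=\omega_2=0$), both of which should recover the classical Coxeter polynomials.
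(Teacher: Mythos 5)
Your proof follows the same overall route as the paper's: start from the AG-invariant formula of \cite[Theorem 6.17]{GM24}, translate the cyclotomic-type factors via the dictionary of \cite{LP20} that attaches to $\partial_i\Sigma$ the AG-pair $(m_i,m_i-\omega_i)$, and identify the $(t-1)$-exponent. The translation of the product factors is correct: with $(n,m)=(m_i,m_i-\omega_i)$ one has $(-1)^{n+m}=(-1)^{2m_i-\omega_i}=(-1)^{\omega_i}$, so each factor becomes $t^{m_i}-(-1)^{\omega_i}$, exactly as in the paper.

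Where you diverge from the paper is in identifying the exponent of $(t-1)$. The paper quotes a relation from \cite{APS23}, while you propose a CW-decomposition argument; that argument is sound and gives $|Q_0|=\sum_i m_i-\chi(\Sigma)$. But the conclusion you draw from it, that the $(t-1)$-exponent is ``exactly $2-2g-b$,'' is not what the degree count yields. Since $\deg\psi=\dim K_0(\dba)=|Q_0|$ and $\deg\prod_i\bigl(t^{m_i}-(-1)^{\omega_i}\bigr)=\sum_i m_i$, the $(t-1)$-exponent must be $|Q_0|-\sum_i m_i=-\chi(\Sigma)=2g+b-2$, the \emph{negative} of what appears in \eqref{eq:coxepoly}. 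The disc sanity check you suggest but do not carry out makes this concrete: for $A_n$ (so $g=0$, $b=1$, $m_1=n+1$, $\omega_1=2$), \eqref{eq:coxepoly} would give $(t-1)(t^{n+1}-1)$, of degree $n+2$, whereas the Coxeter polynomial is $\tfrac{t^{n+1}-1}{t-1}$, of degree $n$; the exponent has to be $-1=2g+b-2$. So the step you describe as ``pins down that exponent'' in fact contradicts the displayed formula, and performing the sanity check would have revealed it. (The same sign slip occurs in the paper's own proof, which asserts $|Q_1|-|Q_0|=2-2g-b$; the Euler-characteristic identity for the dissected surface is $|Q_0|-|Q_1|=\chi(\Sigma)=2-2g-b$, so $|Q_1|-|Q_0|=2g+b-2$.)
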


\begin{proof}
In \cite[Theorem 6.17]{GM24} it is shown that $$\psi(t)=(t-1)^{|Q_1|-|Q_0|} \prod_{\substack{m>0 \\ n\geq 0}} (t^m - (-1)^{m+n})^{\varphi(m,n)}$$ where for any non-negative integers $m$ and $n$,  $\varphi(m,n)$ is the number of pairs of $(m,n)$ in the AG-invariant of $A$, defined in \cite{AG08}. It follows from \cite{APS23} that $|Q_1|-|Q_0| = -2+2g+b$ and in \cite{LP20}, see also \cite{OPS18}, it is shown that $\varphi(m,n)$ is given by the number of boundary components with $m$ stops and winding number $m-n$. The result then follows, noticing that $(-1)^{m+n}=(-1)^{m-n}$.
\end{proof}

\section{Examples and remarks}
\label{section_examples}
This section consists of a collection of examples of graded gentle algebras and marked surfaces. Using Theorems~\ref{thm:main1} and~\ref{thm:main2}, we compute the entropy of the Serre functor and the Serre dimension for the derived categories/Fukaya categories for these examples.  In tables~\ref{table-1} and~\ref{table-2}, we summarize the findings for these examples.

We briefly recall the following notations: the geometric model $(\Sigma,M,\Delta^*, G)$ of a graded gentle algebra $A$ is given by a  surface $\Sigma$ with marking $M$, admissible $\rpoint$-dissection~$\Delta^*$, and grading $G$ (see Definitions~\ref{definition:marked surface},~\ref{definition:addmissable dissections in prelimilary}, and~\ref{definition:graded marked surface}). The algorithm for constructing the geometric model of a (graded) gentle quiver is given in Definition~\ref{def:Sigma-from-A}.

We denote the connected components of the boundary $\partial$ by $\partial_1,\ldots,\partial_b$. We assume that~$\partial_i$ contains $m_i>0$ marked $\gpoint$-points  and there are $l_i$ arcs in $\Delta^*$ attached to $\partial_i$  (an arc is counted with multiplicity two if both ends are attached to $\partial_i$). We denote by $u_i:=l_i-m_i$ the number of polygons (counted with multiplicities) in the dissection $\Delta^*$ that contact $\partial_i$ at a corner. The winding number~$\omega_i$ around $\partial_i$ is defined by formula~\eqref{eq_w} and we define $n_i:=m_i-\omega_i$.  The collection of pairs $(m_i,n_i)$ for $i=1,\ldots,b$ is known as the AG-invariant of $A$ (or of $(\Sigma,M,\Delta^*, G)$).

We note that in the ungraded case $\omega_i=m_i-u_i$ by~\eqref{eq_w} and $n_i=u_i$. 

By Theorem~\ref{thm:main2prime}, the upper and lower Serre dimension of $\dba$ are given by
$$\uSdim \dba = \max \mathcal N \;\; \mbox{ and }\;\;  \lSdim \dba=\min \mathcal N, $$
where
$$\mathcal N=\left\{\frac{n_1}{m_1},\ldots,\frac{n_b}{m_b},0\right\}.$$
By Theorem~\ref{thm:main1prime} the entropy of the Serre functor on $\dba$ is given by 
$$H_t(\mathbb{S}) = \left\{\begin{array}{ll}
						(\uSdim\dba)\cdot t, & t\geq 0; \\ & \\
						(\lSdim\dba)\cdot t, & t\leq 0.
                        \end{array}\right.
$$	
Thus, we will only compute the upper and lower Serre dimensions. In Tables~\ref{table-1} and~\ref{table-2} we collect all the information for Examples~\ref{example_1} to~\ref{example_12}: gentle quivers, geometric models, ratios $n_i/m_i$, and dimensions $\lSdim, \uSdim$. The surfaces in Examples~\ref{example_5},~\ref{example_6}, and~\ref{example_9} are tori: we identify the opposite sides of the squares.

\begin{table}
\centering
\begin{tabular}{|c|c|c|c|c|c|}
\hline
& quiver & model & $n_i/m_i$ & $\lSdim$ & $\uSdim$ \\ \hline
1& $\bul_1 \to \bul_2\to \ldots \to \bul_n$ &
        \begin{tabular}{c}\begin{tikzpicture}[scale=0.72]
                \path[use as bounding box] (-2.3,-2.3) rectangle (2.3,2.3);
                \draw[thick] (0,0) circle (2);  % boundary
      		\foreach \u in {1,...,4} % delta*-arcs
    		\draw[red!60, thick, bend left]  ({2*cos(72*\u-36)},{2*sin(72*\u-36)})       to ({2*cos(72*\u+36)},{2*sin(72*\u+36)});
                \foreach \u in {1,2,3,4} % delta-arcs
                \foreach \u in {0,...,4} % red marked points
  			\draw[fill=red, ] ({2*cos(72*\u+36)},{2*sin(72*\u+36)}) circle (2pt);
                \foreach \u in {0,...,4} % white marked points
			\draw[fill=white] ({2*cos(72*\u)},{2*sin(72*\u)}) circle (2pt);
            \draw[font=\scriptsize] (1.9,-1.9) node {$(n=4)$};
	  \end{tikzpicture}\end{tabular}
         & $\frac{n-1}{n+1}$ & $\frac{n-1}{n+1}$ & $\frac{n-1}{n+1}$ \\  \hline
2& $\xymatrix{\bul_1\ar@<1mm>[r]\ar@<-1mm>[r]&\bul_2}$ &
        \begin{tabular}{c} \begin{tikzpicture}[scale=0.72]
            \path[use as bounding box] (-2.3,-2.3) rectangle (2.3,2.3);            
            \draw[thick] (0,0) circle (2);  % boundary
            \draw[thick, fill=gray!40] (0,0) circle (0.5);  % boundary
            \draw[red!60, thick]  (0.5,0)  to[out=45, in=-45] (0.5,1) to[out=135, in=45]     (-1,1) to (-2,0);
            \draw[red!60, thick]  (0.5,0)  to[out=-45, in=45] (0.5,-1) to[out=-135, in=-45] (-1,-1) to (-2,0);
            \draw[fill=red] (0.5,0) circle (2pt);
            \draw[fill=red] (-2,0) circle (2pt);
            \draw[fill=white] (-0.5,0) circle (2pt);
            \draw[fill=white] (2,0) circle (2pt);
        \end{tikzpicture}\end{tabular}
         & $\frac 11, \frac 11$ & $      1$ & $1$\\ \hline
3& \begin{tabular}{c}  $\xymatrix{\bul_1\ar[r]^x&\bul_2\ar[r]^{y_1}\ar@/_1em/[r]_{y_2}&\bul_3}$ \\ $xy_2=0$            \end{tabular} & \begin{tabular}{c} \begin{tikzpicture}[scale=0.72]
            \path[use as bounding box] (-2.3,-2.3) rectangle (2.3,2.3);
             \draw[thick] (0,0) circle (2);  
             \draw[thick, fill=gray!40] (0,0) circle (0.5);  
             \draw[red!60, thick]  (0.5,0)  to[out=45, in=-45] (0.5,1) to[out=135, in=45]     (-1,1) to (-2,0);
             \draw[red!60, thick]  (0.5,0)  to[out=-45, in=45] (0.5,-1) to[out=-135, in=-45] (-1,-1) to (-2,0);
             \draw[red!60, thick]   (0,2)   to[out=-145, in=70]  (-2,0);
            \draw[fill=red] (0.5,0) circle (2pt);
            \draw[fill=red] (-2,0) circle (2pt);
            \draw[fill=red] (0,2) circle (2pt);
            \draw[fill=white] (-0.5,0) circle (2pt);
            \draw[fill=white] (2,0) circle (2pt);
            \draw[fill=white] ({2*cos(135)},{2*sin(135)}) circle (2pt);
        \end{tikzpicture} \end{tabular} & $\frac 11, \frac 22$ & $1$ & $1$\\ \hline
4& \begin{tabular}{c} $\xymatrix{ & \bul_2\ar[rd]^y & \\ \bul_1\ar[rr]\ar[ru]^x && \bul_3}$ \\ $xy=0$                  \end{tabular} & 
        \begin{tabular}{c} \begin{tikzpicture}[scale=0.72]
            \path[use as bounding box] (-2.3,-2.3) rectangle (2.3,2.3);
             \draw[thick] (0,0) circle (2);  
             \draw[thick, fill=gray!40] (0,0) circle (0.5);  
             \draw[red!60, thick]  (0.5,0)  to[out=45, in=-45] (0.5,1) to[out=135, in=45]     (-1,1) to (-2,0);
             \draw[red!60, thick]  (0.5,0)  to[out=-45, in=45] (0.5,-1) to[out=-135, in=-45] (-1,-1) to (-2,0);
             \draw[red!60, thick]   (0.5,0)   to   (2,0);
            \draw[fill=red] (0.5,0) circle (2pt);
            \draw[fill=red] (-2,0) circle (2pt);
            \draw[fill=red] (2,0) circle (2pt);
            \draw[fill=white] (-0.5,0) circle (2pt);
            \draw[fill=white] ({2*cos(60)},{2*sin(60)}) circle (2pt);
            \draw[fill=white] ({2*cos(-60)},{2*sin(-60)}) circle (2pt);
        \end{tikzpicture} \end{tabular} & $\frac 12, \frac 21$ & $1/2$ & $2$\\ \hline
5& \begin{tabular}{c}
              $\xymatrix{\bul \ar@/^1em/[r]^{x_1}\ar@/_1em/[r]_{y_1} & \bul \ar@/^1em/[r]^{x_2}\ar@/_1em/_{y_2}[r] & \bul    }$  \\
              $x_1x_2=y_1y_2=0$ 
         \end{tabular} & 
         \begin{tabular}{c} \begin{tikzpicture}[scale=0.72]\path[use as bounding box] (-2.3,-2.3) rectangle (2.3,2.3);
             \draw[thick, dotted] (-2,-2) rectangle  (2,2);  
             \draw[thick, fill=gray!40] (0,0) circle (0.5);  
             \draw[red!60, thick]  ({0.5*cos(45)},{0.5*sin(45)})  to (2,2);
             \draw[red!60, thick]  ({0.5*cos(45)},{0.5*sin(45)})  to (2,0);
             \draw[red!60, thick]  ({0.5*cos(45)},{0.5*sin(45)})  to (0,2);
             \draw[red!60, thick]  ({0.5*cos(-135)},{0.5*sin(-135)})  to (-2,-2);
             \draw[red!60, thick]  ({0.5*cos(-135)},{0.5*sin(-135)})  to (-2,0);
             \draw[red!60, thick]  ({0.5*cos(-135)},{0.5*sin(-135)})  to (0,-2);
             \draw[fill=red] ({0.5*cos(45)},{0.5*sin(45)}) circle (2pt);
             \draw[fill=red] ({0.5*cos(-135)},{0.5*sin(-135)}) circle (2pt);
             \draw[fill=white] ({0.5*cos(-45)},{0.5*sin(-45)}) circle (2pt);
             \draw[fill=white] ({0.5*cos(135)},{0.5*sin(135)}) circle (2pt);
        \end{tikzpicture} \end{tabular} & $\frac 42$ & $1$ & $2$ \\   \hline
6&  \begin{tabular}{c}
              $\xymatrix{\bul \ar@/^1em/[r]^{x_1}\ar@/_1em/[r]_{y_1} & \bul \ar@/^1em/[r]^{x_2}\ar@/_1em/_{y_2}[r] & \bul \ar@/^1em/[r]^{x_3}\ar@/_1em/_{y_3}[r] & \bul    }$  \\
              $x_{1}x_2=x_2x_3=0$ \\
              $y_{1}y_2=y_2y_3=0$
         \end{tabular} &  
         \begin{tabular}{c} \begin{tikzpicture}[scale=0.72]\path[use as bounding box] (-2.3,-2.3) rectangle (2.3,2.3);
             \draw[thick, dotted] (-2,-2) rectangle  (2,2);  
             \draw[thick, fill=gray!40] (1,1) circle (0.5);  
             \draw[thick, fill=gray!40] (-1,-1) circle (0.5);  
             
             \draw[red!60, thick]  ({1+0.5*cos(-135)},{1+0.5*sin(-135)})  to[out=180, in=-90] (0,2);
             \draw[red!60, thick]  ({1+0.5*cos(-135)},{1+0.5*sin(-135)})  to[out=210, in=-45] (-2,2);
             \draw[red!60, thick]  ({1+0.5*cos(-135)},{1+0.5*sin(-135)})  to[out=240, in=30] (0,0);
             \draw[red!60, thick]  ({1+0.5*cos(-135)},{1+0.5*sin(-135)})  to[out=270, in=180] (2,0);
   
             \draw[red!60, thick]  ({-1+0.5*cos(45)},{-1+0.5*sin(45)})   to[out=0, in=90] (0,-2);
             \draw[red!60, thick]  ({-1+0.5*cos(45)},{-1+0.5*sin(45)})   to[out=30, in=135] (2,-2);
             \draw[red!60, thick]  ({-1+0.5*cos(45)},{-1+0.5*sin(45)})   to[out=60, in=210] (0,0);
             \draw[red!60, thick]  ({-1+0.5*cos(45)},{-1+0.5*sin(45)})   to[out=90, in=0] (-2,0);
         
             \draw[fill=red] ({1+0.5*cos(-135)},{1+0.5*sin(-135)}) circle (2pt);
             \draw[fill=red] ({-1+0.5*cos(45)},{-1+0.5*sin(45)}) circle (2pt);
             \draw[fill=white] ({1+0.5*cos(45)},{1+0.5*sin(45)}) circle (2pt);
             \draw[fill=white] ({-1+0.5*cos(-135)},{-1+0.5*sin(-135)}) circle (2pt);
        \end{tikzpicture} \end{tabular}& $\frac 31, \frac 31$ & $1$ & $3$ \\   \hline
\end{tabular}
\caption{Examples, part 1}\label{table-1}
\end{table}

\begin{table}
\centering
\begin{tabular}{|c|c|c|c|c|c|}
\hline
& quiver & model & $n_i/m_i$ & $\lSdim$ & $\uSdim$ \\ \hline
7& \begin{tabular}{c}
              $\xymatrix{\bul  \ar@(ur,u)[rr]^x  && \bul  \ar@(d,dr)[ll]^y }$  \\
              $yx=0$ 
         \end{tabular} & 
         \begin{tabular}{c} \begin{tikzpicture}[scale=0.72]\path[use as bounding box] (-2.3,-2.3) rectangle (2.3,2.3);
             \draw[thick] (0,0) circle (2);  
             \draw[thick, fill=gray!40] (0,0) circle (0.5);  
             \draw[red!60, thick]  (0.5,0)  to[out=45, in=-45] (0.5,1) to[out=135, in=90]     (-1.25,0);
             \draw[red!60, thick]  (0.5,0)  to[out=-45, in=45] (0.5,-1) to[out=-135, in=-90]     (-1.25,0);
             \draw[red!60, thick]   (0.5,0)   to   (2,0);
            \draw[fill=red] (0.5,0) circle (2pt);
            \draw[fill=red] (2,0) circle (2pt);
            \draw[fill=white] (-0.5,0) circle (2pt);
            \draw[fill=white] (-2,0) circle (2pt);
        \end{tikzpicture} \end{tabular} & $\frac 01, \frac 21$ & $0$ & $2$ \\   \hline
8&  \begin{tabular}{c} $\xymatrix{ & \bul_2\ar@/^1em/[rd]^y & \\ \bul_1\ar@/^1em/[ru]^x && \bul_3\ar[ll]^z}$ \\             $zx=yz=0$ \end{tabular}  & 
         \begin{tabular}{c} \begin{tikzpicture}[scale=0.72]\path[use as bounding box] (-2.3,-2.3) rectangle (2.3,2.3);
            \draw[thick] (0,0) circle (2);  % boundary
            \draw[thick,  fill=gray!40] (0,0) circle (0.5);  % boundary
            \draw[red!60, thick]  (0,0.5)  to[out=135, in=30] (-2,0);
            \draw[red!60, thick]  (0,-0.5)  to[out=-135, in=-30] (-2,0);
            \draw[red!60, thick]  (1.25,0)  to[out=90, in=0] (0,1.25) to[out=180, in=60] (-2,0);
            \draw[red!60, thick]  (1.25,0)  to[out=-90,in=0] (0,-1.25) to[out=180, in=-60] (-2,0);   
            \draw[fill=red] (0, 0.5) circle (2pt);
            \draw[fill=red] (0, -0.5) circle (2pt);
            \draw[fill=red] (-2,0) circle (2pt);
            \draw[fill=white] (-0.5,0) circle (2pt);
            \draw[fill=white] (0.5,0) circle (2pt);
            \draw[fill=white] (2,0) circle (2pt);
        \end{tikzpicture}       \end{tabular} & $\frac 02, \frac 31$ & $0$ & $3$\\ \hline
9&  \begin{tabular}{c} \resizebox{5cm}{!}{
         $\begin{tikzpicture}[scale=1, >=Latex]
         % Draw the two points and label them
            \node[circle, fill=black, inner sep=2pt, label=135:1] (A) at (-2,0) {};
            \node[circle, fill=black, inner sep=2pt, label=-45:2] (B) at (2,0) {};
         % Draw a curved arrow from A to B
            \draw[-{Stealth[scale=1.2]}, thick, line width=1pt]  (A) .. controls (0,1.5) and (4,0) .. 
            node[pos=0.28, above] {$x$} (B);
            \draw[-{Stealth[scale=1.2]}, thick, >=Latex, line width=1pt] (A) .. controls (-4,0) and (0,-1.5) ..   node[pos=0.72, above] {$z$} (B);
            \draw[-{Stealth[scale=1.2]}, thick, >=Latex, line width=1pt] (B) to  node[midway, above] {$y$} (A);
         \end{tikzpicture}$} \\ $yx=zy=0$ \end{tabular}  & 
         \begin{tabular}{c} \begin{tikzpicture}[scale=0.72]\path[use as bounding box] (-2.3,-2.3) rectangle (2.3,2.3);
             \draw[thick, dotted] (-2,-2) rectangle  (2,2);  
             \draw[thick, fill=gray!40] (0,0) circle (0.5);  
             \draw[red!60, thick]  ({0.5*cos(45)},{0.5*sin(45)})  to[out=0, in=90] (1,-2);
             \draw[red!60, thick]  ({0.5*cos(45)},{0.5*sin(45)})  to[out=30, in=180] (2,1);
             \draw[red!60, thick]  ({0.5*cos(45)},{0.5*sin(45)})  to[out=60, in=-90] (1,2);
             \draw[red!60, thick]  ({0.5*cos(45)},{0.5*sin(45)})  to[out=90, in=0] (-2,1);
             \draw[fill=red] ({0.5*cos(45)},{0.5*sin(45)}) circle (2pt);
             \draw[fill=white] ({0.5*cos(-135)},{0.5*sin(-135)}) circle (2pt);
        \end{tikzpicture} \end{tabular}&  $\frac 31$ & $1$ & $3$\\ \hline
10& \begin{tabular}{c} $\xymatrix{ \bul\ar[rd]^{x_3} &
        & \bul\ar[ld]_{y_3}\\ & \bul\ar[ld]^{x_1}\ar[rd]_{y_1} & \\           \bul\ar[uu]^{x_2} && \bul \ar[uu]_{y_2}}$ \\ $x_{i}x_j=y_iy_j=0$ \end{tabular} & 
        \begin{tabular}{c} \begin{tikzpicture}[scale=0.72] \path[use as bounding box] (-2.3,-2.3) rectangle (2.3,2.3);
                \draw[thick] (0,0) circle (2);  % boundary
			    \draw[red!60, thick]  ({2*cos(45)},{2*sin(45)}) to (0.75,0); % delta^*-arcs
                \draw[red!60, thick]  ({2*cos(-45)},{2*sin(-45)}) to (0.75,0);
                \draw[red!60, thick]  ({2*cos(135)},{2*sin(135)}) to (-0.75,0);
                \draw[red!60, thick]  ({2*cos(-135)},{2*sin(-135)}) to (-0.75,0);
                \draw[red!60, thick]  (0.75,0) to (-0.75,0);
                \foreach \u in {0,...,3} % red marked points
  				\draw[fill=red, ] ({2*cos(90*\u+45)},{2*sin(90*\u+45)}) circle (2pt);
                \draw[fill=red] (0.75,0) circle (2pt); % two more in the centre
                \draw[fill=red] (-0.75,0) circle (2pt); 
                \foreach \u in {0,...,3} % white marked points
  			  \draw[fill=white] ({2*cos(90*\u)},{2*sin(90*\u)}) circle (2pt);
        \end{tikzpicture}   \end{tabular} & $\frac 04$ & $0$ & $1$\\ \hline
11& \begin{tabular}{c} $\xymatrix{ \bul_1\ar[r]^{x_1} & \bul_2\ar[d]^{x_2}\\ \bul_n\ar[u]^{x_n} &                          \bul_3\ar@{.}[l]}$ \\ $x_{i}x_{i+1}=x_nx_1=0$ \end{tabular} &
        \begin{tabular}{c} \begin{tikzpicture}[scale=0.72] \path[use as bounding box] (-2.3,-2.3) rectangle (2.3,2.3);
                \draw[thick] (0,0) circle (2);  % boundary
			    \foreach \u in {0,...,4} % delta*-arcs
    				\draw[red!60, thick]  ({2*cos(72*\u-36)},{2*sin(72*\u-36)}) to (0,0);
                \foreach \u in {0,...,4} % red marked points
  				\draw[fill=red, ] ({2*cos(72*\u+36)},{2*sin(72*\u+36)}) circle (2pt);
                \draw[fill=red] (0,0) circle (2pt); % one more in the centre
                \foreach \u in {0,...,4} % white marked points
  				\draw[fill=white] ({2*cos(72*\u)},{2*sin(72*\u)}) circle (2pt);
                \draw[font=\scriptsize] (1.9,-1.9) node {$(n=5)$};
        \end{tikzpicture}  \end{tabular} & $\frac 0n$ & $0$ & $0$\\ \hline
12 & $A(r,n,m), 0<r<n, m\ge 0$ & see Figure \ref{fig:12a}  &  $\frac{m}{m+r}, \frac{n}{n-r}$ &  $\frac{m}{m+r} $ & $\frac{n}{n-r}$\\
          & $A(n,n,m), n>0, m\ge 0$ & see Figure \ref{fig:12b} & $\frac{m}{m+r}$ &  $\frac{m}{m+r} $ & $\frac{m}{m+r}$\\ \hline
\end{tabular}
\caption{Examples, part 2}\label{table-2}
\end{table}

\begin{example}
\label{example_1}
Let $Q=A_n$ be the Dynkin quiver of type $A_n$ with  linear orientation, see Table~\ref{table-1}, line 1. Let $\calt_{n+1}:=\Db(kA_n)$. The geometric model of $kA_n$ is a disc with $n+1$ $\gpoint$-points on the boundary and the $\Delta^*$-dissection as shown in Table~\ref{table-1}.  It was established by~\cite{AssemHappel}  that a basic algebra is derived equivalent to $kA_n$ if and only if it is a gentle algebra whose underlying graph is a tree with $n$ vertices. Hence, all gentle trees with $n$ vertices share the same derived category $\calt_{n+1}$, their geometric models are given by a disc with $n+1$ $\gpoint$-points on the boundary but have different dissections. 
Moreover, any Fukaya category $\mathcal W(\Sigma, M, \eta)$, where $\Sigma$ is a disc without stops in the interior, is equivalent to $\calt_n$, where  $n$ is the number of stops. Indeed, all line fields on a disc are homotopic, so the only parameter is the cardinality of $M_{\gpoint}$.  In particular, the algebra~$kA_n$ with a grading produces a  category equivalent to $\calt_{n+1}$.
Category $\calt_{n+1}$ is fractionally Calabi--Yau: one has an isomorphism of functors $\SSS^{n+1}\cong [n-1]$.
\end{example}

\begin{example}
\label{example_2}
Let $K_2$ be the Kronecker quiver, see Table~\ref{table-1}, line 2. Its derived category of finitely generated modules $\Db(kK_2)$ is equivalent to the derived category of coherent sheaves on $\mathbb P^1$. The Serre functor on the latter is given by the tensoring with the canonical bundle $\omega_{\mathbb P^1}$ followed by the shift by $1$, which gives a geometric explanation of the equalities $\lSdim=\uSdim=1$.   
\end{example}

\begin{example}
\label{example_3}
Let $A$ be the gentle algebra as in Table~\ref{table-1}, line 3. The derived category of modules $\Db(A)$ is equivalent to the derived category of coherent sheaves on the weighted projective line $\mathbb P^1_{(\mathbf 2)}$ with one weighted point of multiplicity $2$ (see~\cite{GL}). As in Example~\ref{example_2}, this gives a geometric interpretation of the equalities $\lSdim=\uSdim=1$.   
\end{example}

\begin{example}
\label{example_4}
Let $A$ be the gentle algebra as in Table~\ref{table-1}, line 4. The Serre dimension of $\Db(A)$ has been  computed previously in ~\cite[Example 9.1]{E22} by  directly working with complexes of $A$-modules.
\end{example}

\begin{example}
\label{example_5}
Let $A$ be the gentle algebra as in Table~\ref{table-1}, line 5 (sometimes referred to as ``Bondal quiver'' or ``nodal quiver''). The geometric model of $A$ is a torus with one hole, the only boundary component of $\Sigma$ has two $\gpoint$-points and six $\rpoint$-arcs in $\Delta^*$ attached. One has $\mathcal N=\{n_1/m_1,1\}=\{4/2,1\}$. In this case, $\min\mathcal N$ equals not the minimal value of $n_i/m_i$ (which is $2$) but $1$. This is an example that shows  why including $1$ in the definition of   $\mathcal N$ is necessary for the formula giving $\lSdim$. 
\end{example}

\begin{example}
\label{example_6}
Let $A$ be the gentle algebra as in Table~\ref{table-1}, line 6. This example is similar to Example~\ref{example_5}: 
the geometric model of $A$ is a torus with two holes. As before one has $\mathcal N=\{n_1/m_1,1\}=\{4/2,1\}$, and  $\min\mathcal  N=1$. This example appears in \cite{ChangHaidenSchroll}, see also \cite{NakagoTakahashi},  where it is proven that the action of the braid group on the set of full exceptional collections in $\Db( A)$ by mutations is not transitive.
\end{example}

\begin{example}
\label{example_7}
Let $A$ be the Auslander algebra of the ring of dual numbers, see Table~\ref{table-2}, line 7. The geometric model of $A$ is an annulus, and $\mathcal N=\{n_1/m_1,n_2/m_2,1\}=\{0/1,2/1,1\}$. The category $\Db( A)$ has an exceptional collection whose graded endomorphism algebra is the Kronecker algebra from Example~\ref{example_2} with non-trivial grading: the degrees of two arrows are $0$ and $1$. Hence, Serre dimension of $\Db( A)$ can also be computed using \cite[Prop. 8.4]{E22}.
\end{example}

\begin{example}
\label{example_8}
Let $A$ be the gentle algebra as in Table~\ref{table-2}, line 8. The Serre dimension of $\Db(A)$ has been computed previously in ~\cite[Example 9.4]{E22} by directly working with complexes of $A$-modules.
\end{example}

\begin{example}
\label{example_9}
Let $A$ be the gentle algebra as in Table~\ref{table-2}, line 9. Its geometric model is a torus with a hole and with one marked $\gpoint$-point. 
The only ratio 
$n_1/m_1$ is $3/1$, however the lower Serre dimension is $1$.
\end{example}

The next two examples are of infinite global dimension.

\begin{example}
\label{example_10}
Let $A$ be the gentle algebra as in Table~\ref{table-2}, line 10.
The geometric model of  $A$ is a disc with two $\rpoint$-points in the interior. Here $\mathcal N=\{n_1/m_1,1\}=\{0/4,1\}$. 
Hence, the maximum of $\mathcal N$ is not the maximal value of $n_i/m_i$ (which is $0$) but $1$. This is an example that shows why including 1 in the
definition of $\mathcal N$ is necessary for the formula giving  $\uSdim$.  
\end{example}

\begin{example}
\label{example_11}
Let $A=k\tilde A_{n-1}/R^2$ be the path algebra of a cycle of length $n$ modulo all quadratic relations (see Table~\ref{table-2}, line 11). This is a radical square zero cyclic  Nakayama algebra. Consider the grading on $A$ where  the arrow from $i$ to $i+1$  is in degree $d_i$, for $1 \leq i \leq n$ and where we set $n+1 = 1$. 
The geometric model of $A$ is a disc with one $\rpoint$-point in the interior and $n$ $\gpoint$-points  on the boundary, see the picture in the table. 
Set $d=\sum_i d_i$.  It follows from~\eqref{eq_w} that the winding number $\omega$ around the only boundary is 
$$\omega_1=\sum_{i=1}^n (1-d_i)=n-d$$
and $n_1=m_1-\omega_1=d$. Our results do not apply to these algebras directly. However, below we compute the Serre dimensions  explicitly.

Let $\ell_i$ be the $\gpoint$-arc corresponding to vertex $i$, then 
$\tau\ell_i\sim \ell_{i+1}$ and (by Definition~\ref{def_degree-of-orint} and the definition of $\tau f$) $\tau(\ell_i,f)\sim (\ell_{i+1}, f+1-d_i)$ for any grading $f$ of $\ell_i$. Therefore $\tau^n (\ell_i,f)\sim (\ell_i, f+n-d)$ for all $i$. It follows that $\tau^n=[d-n]$ and hence $\SSS^n=[d]$ (on arcs from $\Delta$). We obtain that 
$$\uSdim \dba =\lSdim \dba=d/n=n_1/m_1.$$
In the ungraded case this gives $\uSdim \dba =\lSdim \dba=0$.
One can see that the formulas from Theorem~\ref{thm:main2prime} do not  give  the correct  answer.

Furthermore, note that a graded marked surface $(\Sigma, M, \eta)$, where $\Sigma$ is a disc with one stop in the interior, is 
determined by two parameters: the number $n$ of stops on the boundary and the winding number $\omega$ of $\eta$ around the boundary.
Let $\calt_{n,\omega}$ be the corresponding Fukaya category, then $\dba\cong \calt_{n,n-d}$.
Note that categories $\dba$ from this example provide all possible values of the parameters $n$ and $\omega$. Hence, any graded gentle algebra, whose model is a disc with one stop in the interior, is derived equivalent to some radical square zero graded cyclic Nakayama algebra.
\end{example}

In the following example, we consider derived discrete algebras and their 
perfect derived categories, see Table~\ref{table-2}, line 12. Entropy of the Serre functor for these categories has also been calculated in~\cite{C25}.
\begin{example}
\label{example_12} 
Let $Q=Q(n,m)$ be the quiver
$$\xymatrix{
&&&& \bul_1 \ar[r]^{\alpha_1} & \ldots \ar[r]^{\alpha_{n-r-2}} & \bul_{n-r-1}\ar[rd]^{\alpha_{n-r-1}} & \\
\bul_{-m}\ar[r] & \ldots\ar[r] & \bul_{-1}\ar[r] & \bul_0 \ar[ru]^{\alpha_0} &&&& \bul_{n-r}\ar[ld]^{\alpha_{n-r}}\\
&&&& \bul_{n-1}\ar[lu]^{\alpha_{n-1}} & \ldots\ar[l]^{\alpha_{n-2}} & \bul_{n-r+1}\ar[l]^{\alpha_{n-r+1}}
}$$
where $1\le n$, $m\ge 0$. For $1\le r\le n$, let $A(r,n,m)$ be the path algebra of $Q(n,m)$ over~$k$ with $r$ relations
$$\alpha_{n-1}\alpha_{0}=\alpha_{n-2}\alpha_{n-1}=\ldots=\alpha_{n-r}\alpha_{n-r+1}=0.$$
Clearly, $A(r,n,m)$ is finite-dimensional and gentle. The global dimension of $A(r,n,m)$ is finite if and only if $r<n$.

The derived categories of these algebras are known \cite{Vossieck}
to be \emph{discrete}: roughly speaking, indecomposable objects in these categories have no moduli. Moreover, by \cite{BobinskiGeissSkowronski}
if the  derived category $\Db( A)$ of some algebra is indecomposable and discrete then $\Db( A)\cong \Db( k\Gamma)$, where $\Gamma$ is a Dynkin quiver, or $\Db( A)\cong \Db(A(r,n,m))$
for some $r,n,m$. Thus, algebras $A(r,n,m)$ are central in the study of discrete derived categories. 

First, assume $r<n$ (finite global dimension case). Then the geometric model of $A(r,n,m)$ is an annulus,  see Figure~\ref{fig:12a}.
Here, one identifies the left and right sides of the rectangle to get an annulus, and the arcs in $\Delta^*$ are labeled with the corresponding vertices of $Q_{n,m}$. One has 
\begin{align*}
m_{\mathrm{bottom}}&=m+r, & u_{\mathrm{bottom}}=n_{\mathrm{bottom}}=&m, & \omega_{\mathrm{bottom}}=&r,\\ 
m_{\mathrm{top}}&=n-r, & u_{\mathrm{top}}=n_{\mathrm{top}}=&n, &\omega_{\mathrm{top}}=&-r.
\end{align*}
Hence by Theorem~\ref{thm:main2prime} one has
$$\lSdim(\Db(A(r,n,m)))=\frac{m}{r+m}<1, \quad \uSdim(\Db(A(r,n,m)))=\frac{n}{n-r}>1.$$

\begin{figure}
    \centering
    \begin{tikzpicture}[scale=0.6] %\path[use as bounding box] (-2.3,-2.3) rectangle (2.3,2.3);
                \draw[thick] (-8,-4) to  (8,-4);  % bottom boundary
                \draw[thick] (-8,4) to  (8,4);  % top boundary
                \draw[red!60, thick] (-8,-4) to  (-8,4);  % left boundary
                \draw[red!60, thick] (8,-4) to  (8,4);  % right boundary
			    \foreach \u in {1,3,4} % delta*-arcs
    				\draw[red!60, thick]  ({2*\u},-4) arc[start angle = 0, end angle = 180, radius = 1];
                \foreach \u in {2} % delta*-arcs
    				\draw[red!60, thick, dashed]  ({2*\u},-4) arc[start angle = 0, end angle = 180, radius = 1];
                \foreach \u in {-1,1,2} % delta*-arcs
    				\draw[red!60, thick]  (4*\u,4) arc[start angle = -45, end angle = -135, radius = {4*cos(45)}];
                \foreach \u in {0} % delta*-arcs
    				\draw[red!60, thick, dashed]  (4*\u,4) arc[start angle = -45, end angle = -135, radius = {4*cos(45)}];
                \draw[red!60, thick]  (-6,-4) -- (-8,4) node[midway, sloped, fill=white, inner sep=2pt, font=\scriptsize]{$n-1$};
                \draw[red!60, thick, dashed]  (-4,-4) to (-8,4);
                \draw[red!60, thick]  (-2,-4) -- (-8,4) node[midway, sloped, fill=white, inner sep=2pt, font=\scriptsize]{$n-r+1$};
                \foreach \u in {-4,...,4} % red marked pts
                	\draw[fill=red] ({2*\u},-4) circle (2pt);
                \foreach \u in {-2,...,2} % red marked pts
  				\draw[fill=red] ({4*\u},4) circle (2pt);
                \foreach \u in {-3,...,4} % white marked points
  				\draw[fill=white] ({2*\u-1},-4) circle (2pt);
                \foreach \u in {-1,...,2} % white marked points
  				\draw[fill=white] ({4*\u-2},4) circle (2pt);
                \draw[font=\scriptsize, red!60] (7,-3) node[fill=white, inner sep=2pt] {$-1$}; %labels below
                \draw[font=\scriptsize, red!60] (5,-3) node[fill=white, inner sep=2pt] {$-2$};
                \draw[font=\scriptsize, red!60] (1,-3) node[fill=white, inner sep=2pt] {$-m$};
                \draw[font=\scriptsize, red!60] (6,3.2) node[fill=white, inner sep=2pt] {$1$}; %labels above
                \draw[font=\scriptsize, red!60] (2,3.2) node[fill=white, inner sep=2pt] {$2$}; 
                \draw[font=\scriptsize, red!60] (-6,3.2) node[fill=white, inner sep=2pt] {$n-r$};
                \draw[font=\scriptsize, red!60] (8,0) node[fill=white, inner sep=2pt] {$0$}; 
                \draw[font=\scriptsize, red!60] (-8,0) node[fill=white, inner sep=2pt] {$0$}; 
    \end{tikzpicture}
    \caption{Geometric model of the derived discrete algebra $A_{r,n,m}$, finite global dimension case $r<n$.  } 
    \label{fig:12a}
\end{figure}

%\pagebreak

If $r=n$ (infinite global dimension case) the geometric model of $A(r,n,m)$ is
a disc with one stop in the interior, see Figure~\ref{fig:12b}.
\begin{figure}
    \centering
    \begin{tikzpicture}[scale=0.6] %\path[use as bounding box] (-2.3,-2.3) rectangle (2.3,2.3);
                \draw[thick] (0,0) circle (4);  %  boundary
                \foreach \u in {0,...,11} % red marked pts
                	\draw[fill=red] ({4*cos(\u*30},{4*sin(\u*30)}) circle (2pt);
                \draw[fill=red] (0,0) circle (2pt);
                \foreach \u in {0,...,11} % white marked pts
                	\draw[fill=white] ({4*cos(\u*30+15},{4*sin(\u*30+15)}) circle (2pt);
                % Delta^*-arcs
			    \draw[red!60, thick]  ({4*cos(0*30},{4*sin(0*30)}) -- (0,0)  node[midway, sloped,fill=white, inner sep=2pt,font=\tiny] {$0$}; 
                \draw[red!60, thick]  ({4*cos(1*30},{4*sin(1*30)}) -- (0,0)  node[midway, sloped,fill=white, inner sep=2pt,font=\tiny] {$n-1$};
                \draw[red!60, thick]  ({4*cos(2*30},{4*sin(2*30)}) -- (0,0)  node[midway, sloped,fill=white, inner sep=2pt,font=\tiny] {$n-2$}; 
                \draw[red!60, thick, dashed]  ({4*cos(3*30},{4*sin(3*30)}) -- (0,0)  ;
                \draw[red!60, thick]  ({4*cos(4*30},{4*sin(4*30)}) -- (0,0)  node[midway, sloped,fill=white, inner sep=2pt,font=\tiny] {$2$}; 
                \draw[red!60, thick]  ({4*cos(5*30},{4*sin(5*30)}) -- (0,0)  node[midway, sloped,fill=white, inner sep=2pt,font=\tiny] {$1$}; 
                \foreach \u in {0} % More Delta*-arcs
    				\draw[red!60, thick]  ({4*cos(-\u*30},{4*sin(-\u*30)}) arc[start angle = 90-\u*30, end angle = 240-\u*30, radius = {4*tan(15)}] node[midway, sloped,fill=white, inner sep=2pt,font=\tiny] {$-1$}; 
                \foreach \u in {1} % More Delta*-arcs
    				\draw[red!60, thick]  ({4*cos(-\u*30},{4*sin(-\u*30)}) arc[start angle = 90-\u*30, end angle = 240-\u*30, radius = {4*tan(15)}] node[midway, sloped,fill=white, inner sep=2pt,font=\tiny] {$-2$}; 
                \foreach \u in {2,3} % More Delta*-arcs
    				\draw[red!60, thick, dashed]  ({4*cos(-\u*30},{4*sin(-\u*30)}) arc[start angle = 90-\u*30, end angle = 240-\u*30, radius = {4*tan(15)}];    
                \foreach \u in {4} % More Delta*-arcs
    				\draw[red!60, thick]  ({4*cos(-\u*30},{4*sin(-\u*30)}) arc[start angle = 90-\u*30, end angle = 240-\u*30, radius = {4*tan(15)}] node[midway, sloped,fill=white, inner sep=2pt,font=\tiny] {$-m+1$}; 
                \foreach \u in {5} % More Delta*-arcs
    				\draw[red!60, thick]  ({4*cos(-\u*30},{4*sin(-\u*30)}) arc[start angle = 90-\u*30, end angle = 240-\u*30, radius = {4*tan(15)}] node[midway, sloped,fill=white, inner sep=2pt,font=\tiny] {$-m$}; 
    \end{tikzpicture}
    \caption{Geometric model of the derived discrete algebra $A_{r,n,m}$, infinite global dimension case $r=n$}
    \label{fig:12b}
\end{figure}
We have 
$$m_1=m+r, u_1=n_1=m, \omega_1=r$$ in this case. Theorem~\ref{thm:main2prime} does not apply, but such models are treated in Example~\ref{example_11} and hence 
$$\lSdim(\Db(A(n,n,m)))=\uSdim(\Db(A(n,n,m)))=\frac{m}{n+m}<1.$$
In this case  category $\Db( A(n,n,m))$ is fractionally Calabi--Yau: $\SSS^{n+m}\cong [m]$.
\end{example}

\begin{example}
    \label{example_onehole}
Let $(\Sigma,M)$ be a marked surface of genus $g\ge 1$ such that  $\partial\cong S^1$ and $\eta$ be a line field on $\Sigma$. Let $\mathcal W$ be the corresponding Fukaya category and $m=|M_{\gpoint}|$. Then
\begin{equation*}
h_t({\SSS_{\mathcal W}})=
\begin{cases}
(1+\frac{4g-2}m)\cdot t, & t\ge 0,\\    
t, & t\le 0,
\end{cases}
\end{equation*}
$$
\uSdim(\mathcal W)=1+\frac{4g-2}m,\quad
\lSdim(\mathcal W)=1.
$$ 
In particular, entropy and dimension are independent of the line field. 

Indeed, the winding number of $\eta$ around the only boundary component of $\Sigma$ is $2-4g$ by Poincar\'e--Hopf index theorem (e.g. see \cite[(1.3)]{LP20}), and Theorems~\ref{thm:main1} and~\ref{thm:main2} provide the answer.

Such surfaces arise in Example~\ref{example_5} (with $g=1$, $m=2$) and Example~\ref{example_9} (with $g=1$, $m=1$). In Example~\ref{example_5}, the line field $\eta_A$ on the torus is homotopic to the constant line field, while in Example~\ref{example_9} it is not.  
\end{example}

\begin{example}
Let $A$ be the gentle algebra from Example~\ref{example_5}, and $(\Sigma,M, \eta)$ be the corresponding marked surface with a line field. Let $P$ be the $A$-module with dimension vector $(1,1,1)$ such that $x_1,x_2$ act by $1$ and $y_1,y_2$ act by $0$, it is an exceptional object in $\Db(A)$. The corresponding arc on $(\Sigma,M)$  connects two marked points $p_1,p_2\in M$ and goes parallel to $\partial$ in the negative direction. The orthogonal category $\calw:=P^{\perp}\subset \Db(A)$ is known (see~\cite{HaidenWu})  to be equivalent to the Fukaya category associated with the same model and $p_2$ removed: the torus with one hole, one marked point, and constant line field. Hence, $\calw$ is equivalent to the perfect derived category of the algebra from Example~\ref{example_9} with a grading (for example, one can take $\deg x=\deg z=0, \deg y=1$). Category~$\calw$ has no exceptional objects; this fact was used to demonstrate that Jordan--H\"older property for triangulated categories fails, see~\cite{K13}. It is also interesting to note that~$\calw$ has no stability conditions, see~\cite[Th. 1.1]{HaidenWu}. Finally, category $\calw$ was studied in~\cite{Sung}: in particular, the Serre functor on $\calw$ was related to the spherical twist by a $3$-spherical object in $\calw$ that corresponds to the $\gpoint$-loop around $\partial$ with endpoints at~$p_1$.

As a special case of Example~\ref{example_onehole} we get 
$$\lSdim(\calw)=1,\quad \uSdim(\calw)=3.$$
\end{example}

\begin{remark}
Note that the algebras in Examples~\ref{example_7}, \ref{example_8}, \ref{example_11} 
are $A(1,2,0)$, $A(2,3,0)$, $A(n,n,0)$ respectively. The algebra in Example~\ref{example_4} is derived Morita-equivalent, but not isomorphic, to $A(1,2,1)$.
\end{remark}

\begin{remark}
Note that any pair of rational numbers $\alpha,\beta$ such that $0\le \alpha<1<\beta$ can be realised as 
$(\alpha,\beta)=(\lSdim(\Db(A)), \uSdim(\Db(A))$ for some derived discrete algebra $A=A(r,n,m)$ of finite global dimension from Example~\ref{example_12}. Indeed, let 
$$\alpha=p/q, \beta=t/s, \quad\text{where}\quad p,q,t,s\in\Z, q>p\ge 0, t>s>0.$$
Take
$$r=(t-s)(q-p), n=t(q-p), m=p(t-s),$$
then by Theorem~\ref{thm:main2prime} one has
$\alpha=m/(r+m), \beta=n/(n-r)$ as required.
\end{remark}

\end{document}